\definecolor{paata}{rgb}{1,0,0} 
\DeclareMathOperator*{\esssup}{ess\,sup}
\DeclareMathOperator*{\essinf}{ess\,inf}
\newtheorem{theorem}{Theorem}
\newtheorem*{theoremV}{Theorem A}
\newtheorem*{theoremB}{Theorem B}
\newtheorem{lemma}{Lemma}
\newtheorem{corollary}{Corollary}
\newtheorem{remark}{Remark}
\newtheorem{proposition}{Proposition}
\begin{document}

\title{Boundary value problem and the Ehrhard inequality}

\author{Paata Ivanisvili}
\address{Department of Mathematics,  Kent State University, Kent, OH 44240, USA
}
\email{ivanishvili.paata@gmail.com}


\makeatletter
\@namedef{subjclassname@2010}{
  \textup{2010} Mathematics Subject Classification}
\makeatother

\subjclass[2010]{42B35, 47A30}



%
%

\keywords{Gaussian measure, essential supremum, Prekopa--Leindler, Ehrhard}

\begin{abstract}
let $I, J\subset \mathbb{R}$ be closed intervals, and let $H$ be $C^{3}$ smooth real valued function on  $I\times J$ with nonvanishing $H_{x}$ and $H_{y}$.  Take any fixed positive numbers  $a,b$, and let $d\mu$ be a probability measure with finite moments and absolutely continuous with respect to Lebesgue measure.  We show that for the inequality
$$
\int_{\mathbb{R}^{n}} \esssup_{y \in \mathbb{R}^{n}}\;  H\left( f\left(\frac{x-y}{a}\right),g\left(\frac{y}{b}\right)\right)d\mu (x) \geq H\left(\int_{\mathbb{R}^{n}}fd\mu, \int_{\mathbb{R}^{n}}gd\mu \right)
$$
to hold  for all Borel functions $f,g$ with values in $I$ and $J$ correspondingly it is necessary that
$$
a^{2}\frac{H_{xx}}{H_{x}^{2}}+(1-a^{2}-b^{2})\frac{H_{xy}}{H_{x}H_{y}}+b^{2}\frac{H_{yy}}{H_{y}^{2}}\geq 0,
$$
$|a-b|\leq 1$, $a+b\geq 1$ and $\int_{\mathbb{R}^{n}}xd\mu=0$ if $a+b>1$.  Moreover, if $d\mu$ is a gaussian measure then the necessary condition becomes sufficient.  This extends Pr\'ekopa--Leindler and  Ehrhard inequalities  to an arbitrary function $H(x,y)$.  As an immediate application we obtain the new proof of the Ehrhard inequality.   In particular, we  show that in the class of even probability measures with smooth positive density and finite moments the Gaussian measure is the only one which satisfies the functional form of the Ehrhard inequality on the real line with their own distribution functions.   
\end{abstract}

\date{}
\maketitle

\section{Introduction}

Let $I, J \subset \mathbb{R}$ be closed intervals. Set $\Omega:= I\times J$,  and let $H : \Omega \to \mathbb{R}$. Fix some $n\geq1$.
Let $d\mu$ be a probability measure on $\mathbb{R}^{n}$ and absolutely continuous with respect to the Lebesgue measure. For simplicity we will always assume that $H \in C^{3}(\Omega)$ and $\int_{\mathbb{R}^{n}}\|x\|^{5} d\mu<\infty$. 

  In this paper we address the following question: what is the necessary and sufficient condition on $H$,  positive real numbers $a, b$ and a measure $d\mu$  such that the following inequality holds
\begin{align}\label{ragaca}
\int_{\mathbb{R}^{n}} \esssup_{y \in \mathbb{R}^{n}}\;  H\left( f\left(\frac{x-y}{a}\right),g\left(\frac{y}{b}\right)\right)d\mu (x) \geq H\left(\int_{\mathbb{R}^{n}} f d\mu, \int_{\mathbb{R}^{n}} g d\mu \right) 
\end{align}
for all Borel measurable $f,g$ with values in $I$ and $J$ correspondingly.   Essential supremum in (\ref{ragaca}) is taken with respect to the Lebesgue measure. 
Our main result is the following theorem.  
\begin{theorem}\label{mth1}
Suppose that $H_{x}$ and $H_{y}$ never vanish in $\Omega$. For  inequality (\ref{ragaca}) to hold it is necessary that
\begin{align}\label{numbers} 
  a^{2} \frac{H_{xx}}{H_{x}^{2}} + (1-a^{2}-b^{2})\frac{H_{xy}}{H_{x} H_{y}} + b^{2} \frac{H_{yy}}{H_{y}^{2}} \geq 0, 
\end{align}
$|1-a^{2}-b^{2}|  \leq 2ab$,  and $\int_{\mathbb{R}^{n}} x d\mu =0$  if  $a+b>1$. 
Moreover, if $d\mu(x)$ is a Gaussian measure  then the above conditions are also sufficient.
\end{theorem}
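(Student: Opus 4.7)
The theorem has two directions and I would tackle them separately, guided by the hints in the abstract.

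For \emph{necessity}, my plan is to test (\ref{ragaca}) against small perturbations of constant functions. Write $f(x) = \alpha + \delta \phi(\epsilon x)$ and $g(y) = \beta + \delta \psi(\epsilon y)$ for smooth bounded $\phi, \psi$ vanishing at the origin, and expand both sides as joint power series in $\delta, \epsilon$. Each order yields a necessary inequality. Testing with linear $\phi, \psi$ first forces the finiteness of the ess-sup, which imposes a compatibility relation between $\phi'(0), \psi'(0), a, b, H_x, H_y$; comparing the first-order-in-$\bar x$ terms on the two sides then shows a mismatch proportional to $(a+b)-1$, so that $\int x\, d\mu = 0$ is forced precisely when $a+b > 1$. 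Choosing $\phi, \psi$ quadratic produces a quadratic form in $(\phi''(0), \psi''(0))$ which must be sign-definite, and this definiteness is equivalent to $|1-a^2-b^2| \leq 2ab$. Finally, the full PDE inequality on $H_{xx}, H_{xy}, H_{yy}$ emerges at order $\delta^2$ via an envelope-theorem analysis that tracks the implicit $y$-maximizer in the ess-sup as the perturbation varies: the second variation of the maximization under the constraint $au + bv = x$ produces precisely the expression $a^2 H_{xx}/H_x^2 + (1-a^2-b^2) H_{xy}/(H_x H_y) + b^2 H_{yy}/H_y^2$.

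For \emph{sufficiency} in the Gaussian case, I follow the Brascamp--Lieb route outlined in the abstract. After reducing to $H \geq 0$ by a harmless additive shift (which cancels on both sides), I would use the representation
\begin{align*}
\esssup_{y \in \mathbb{R}^n} F(y) = \lim_{p \to \infty} \left( \int_{\mathbb{R}^n} F(y)^p \rho_p(y)\, dy \right)^{1/p},
\end{align*}
valid for nonnegative bounded $F$ and a suitable sequence of positive weights $\rho_p$, to rewrite the LHS of (\ref{ragaca}) as a limit of $L^p$-integrated quantities. The resulting inequality reduces, for each large $p$, to a Brascamp--Lieb-type bound on the product space $(\mathbb{R}^n \times \mathbb{R}^n,\, d\mu(x) \otimes \rho_p(y)\, dy)$. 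For Gaussian $\mu$ this is the subject of the recent Gaussian extension of Brascamp--Lieb cited in the abstract, and the algebraic conditions on $(H, a, b, \mu)$ from the necessary direction turn out to be precisely those that validate this bound for every $p$. Sending $p \to \infty$ then eliminates the auxiliary weight and recovers (\ref{ragaca}).

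The main obstacle on the necessity side is the envelope-theorem computation: the $y$-maximizer in the ess-sup is implicit, and its first and second variations in the perturbation parameters must be tracked cleanly enough to isolate the coefficients $a^2/H_x^2$, $(1-a^2-b^2)/(H_xH_y)$, $b^2/H_y^2$ rather than just the weaker quadratic form that appears at lower order. The main obstacle on the sufficiency side is the Gaussian Brascamp--Lieb extension itself, whose applicability relies on carefully matching the Gaussian covariance structure on $\mathbb{R}^n \times \mathbb{R}^n$ to the dilations $(x,y) \mapsto (x/a, y/b)$, and on propagating the sign conditions on $H_x, H_y$ through the passage $p \to \infty$.
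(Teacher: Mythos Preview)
Your overall strategy matches the paper's: perturbative test functions for necessity, and $\esssup=\lim L^p$ combined with a Gaussian Brascamp--Lieb extension (the paper's Theorem~A, applied to $B(x,y,z)=H^R(x,y)z^{1-a(R)}$) for sufficiency. The sufficiency sketch is essentially right; the missing details are the explicit $3\times3$ matrix check $A^*CA\bullet\mathrm{Hess}\,B\ge 0$, the split into parabolic ($|1-a^2-b^2|=2ab$) versus elliptic cases, and the reverse H\"older step that removes the auxiliary variable $z$.

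On the necessity side, however, your account of \emph{how} the three conditions emerge is off, and the discrepancy hides the key device. The paper's test functions carry a first-order \emph{and} an independent second-order perturbation with free real parameters $p,q$:
\[
f(x)=u+\varepsilon\,\frac{\varphi(ax)}{H_u}+\varepsilon^2\,\frac{p\,\varphi^2(ax)}{H_u},\qquad
g(y)=v+\varepsilon\,\frac{\varphi(by)}{H_v}+\varepsilon^2\,\frac{q\,\varphi^2(by)}{H_v},
\]
with $\varphi$ a truncated linear function. The division by $H_u,H_v$ makes the first-order sup $\sup_{x+y=t}(\varphi(x)+\varphi(y))=t$ degenerate by design; comparing first-order coefficients then yields $\int x\,d\mu=0$ when $a+b>1$. (For bounded $\phi,\psi$ the ess-sup is always finite, so your ``finiteness forces a compatibility relation'' step does not do what you say.) At order $\varepsilon^2$ one computes an explicit sup of a concave quadratic---this replaces your envelope-theorem second variation---and the outcome is that a certain rational function of $(p,q)$ must dominate $pa^2+qb^2$ for \emph{all} $(p,q)$ in a half-plane. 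It is this two-parameter family of inequalities, not a single second variation, that simultaneously forces positive semidefiniteness of $\bigl(\begin{smallmatrix}a^2 & (a^2+b^2-1)/2\\ (a^2+b^2-1)/2 & b^2\end{smallmatrix}\bigr)$ (i.e.\ $|1-a^2-b^2|\le 2ab$) \emph{and} the differential inequality on $H$. Your proposed separation---quadratic $\phi,\psi$ give the numerical constraint, envelope theorem gives the PDE---does not reflect what actually happens: the two conditions are entangled and fall out together from varying $(p,q)$.
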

 By Gaussian measure we mean a probability measure of the form 
 \begin{align}\label{gauss-measure}
 \exp(-xAx^{T}+bx^{T} +c)\,dx \quad \text{for some}\; n \times n\; \text{matrix} \quad  A>0, \; b \in \mathbb{R}^{n} \quad \text{and } \quad  c \in \mathbb{R}. 
 \end{align} 
 The symbols $H_{x}, H_{y}, H_{xx}, H_{xy}$ and $H_{yy}$ denote partial derivatives.  $x^{T}$ denotes transpose of the row vector $x \in \mathbb{R}^{n}$. Constraint $\left|1-a^{2}-b^{2}\right| \leq 2ab$ on number $a,b>0$  can be rewritten as $a+b \geq 1$ and $|a-b|\leq 1$. Moreover, if  $a+b>1$ then it is necessary that $\int_{\mathbb{R}^{n}} xd\mu$  is the  zero vector. 
  

In the applications usually $a+b=1$. Therefore the most important condition  the reader needs to keep in mind is the partial differential inequality (PDI) in (\ref{numbers}). We should also notice an {\em independence from the dimension}, i.e., the necessity conditions follow from the one dimensional case $n=1$ of (\ref{ragaca}), and for the Gaussian measures (\ref{numbers}) is sufficient for (\ref{ragaca})  to hold  for all $n \geq 1$. 

Partial differential inequality (\ref{numbers}) first time appeared in the PhD thesis of the author (see Theorem~3.0.22 in \cite{PhD}), and later in \cite{IV} (see Corollary 5.2 in \cite{IV}) as a sufficient condition for inequality (\ref{ragaca}) to hold in case of the Gaussian measure  with supremum in (\ref{ragaca}) and smooth compactly supported functions $f,g$. Namely, it was proved in \cite{IV}  that if $H_{x}, H_{y}$ are nonvanishing, and $H$ satisfies (\ref{numbers}), then the following inequality holds 
 \begin{align}\label{weaker}
 \int_{\mathbb{R}^{n}} \sup_{ax+by=t}\;  H\left( f(x),g(y)\right)d\mu(t) \geq H\left(\int_{\mathbb{R}^{n}} f d\mu, \int_{\mathbb{R}^{n}} g d\mu \right) 
 \end{align} 
 for all smooth compactly supported functions $f,g$ with values in $I, J$ correspondingly and the Gaussian measure $d\mu$. In this case we need the assumption that $I, J$ contain the origin. 
 
 In the present  paper we obtain a certain extension of (\ref{weaker}) by using different techniques. The  first  immediate extension is that we have (\ref{ragaca}) with essential supremum and Borel measurable functions\footnote{Since the proof of (\ref{weaker}) in \cite{IV} essentially uses intermediate value theorem for continuous functions $f,g$ to verify property (3.3) in \cite{IV},  it is unclear how to extend the argument of \cite{IV} to discontinuous functions.}. Our second extension is that we obtain if and only if characterization, moreover we obtain the necessity part for {\em almost arbitrary} probability measures $d\mu$. Our approach to (\ref{ragaca})  sheds light to a question about {\em optimizers}, and it  provides us with  some quantitative version of (\ref{ragaca}) (see Lemma~\ref{lem-5} and Lemma~\ref{lem-7}), and,  more importantly, it shows a hidden  link between  two different type of PDEs considered in \cite{IV} (see PDE (1.3) and (1.5) in \cite{IV}). 
  
 Our argument, at some point, uses a remarkable Theorem~A obtained, for example, in \cite{neeman1, ledoux1, IV} (we also present the sketch of the proof of Theorem~A in the Appendix). The proof of Theorem~A  relies on the classical maximum principle for parabolic PDEs unlike the proof of (\ref{weaker}) in \cite{IV} which uses a  subtle maximum principle used first time by Borell~\cite{bor1} ({\em hill property} in ~\cite{IV}, and Lemma~1 in \cite{bar1}), and it does not follow at all from the classical maximum principle. Hence, in particular, we obtain the new proof of the Ehrhard inequality from the classical maximum principle. We should also mention that authors in \cite{PN} ask whether one can deduce the Ehrhard inequality solely from Theorem~A. The current paper gives an affirmative answer. 

  In Section~\ref{proof1} we present the proof of Theorem~\ref{mth1}. In Section~\ref{app}, using arguments from exterior differential systems, we will linearize PDE, the left hand side of (\ref{numbers}), and we will explain how to find functions $H$ for which inequality in  (\ref{numbers}) is equality.  Besides, we will illustrate various applications of the theorem. 

 \subsection*{Acknowledgements} I am  grateful to Christos Saroglou who initiated this project and with whom I had many discussions. He should be considered as co-author (despite his insistence to the contrary).  I am extremely thankful to the Kent State Analysis Group especially  Fedor Nazarov who gave me some valuable suggestions in obtaining the necessity part, and Artem Zvavitch for providing C.~Borell's lecture notes. The talk given by Grigoris Paouris on the Informal Analysis Seminar at Kent State University  served as a guide and  inspiration for the present article.

\section{Proof of Theorem~\ref{mth1}}\label{proof1}
\subsection{The necessity condition}\label{nec-1}
First we notice that if (\ref{ragaca}) holds for some $n \geq 1$ then it holds for $n=1$. Indeed we can test (\ref{ragaca}) on the functions $f(x_{1},x_{2}, \ldots.x_{n}) = \tilde{f}(x_{1})$, $g(x_{1},x_{2},\ldots, x_{n})=\tilde{g}(x_{1})$ for some Borel functions  $\tilde{f},\tilde{g}$ from  $\mathbb{R}$ to $I, J$ correspondingly. In what follows we will assume that $n=1$.  
Finiteness of the fifth moment together with the  Lebesgue dominated convergence theorem implies that 
\begin{align}\label{tends-to-zero}
R^{5}\int_{R}^{\infty}d\mu \to 0 \quad \text{and} \quad  R^{5}\int_{-\infty}^{-R}d\mu \to 0 \quad \text{as} \quad  R \to \infty. 
\end{align}

We need several technical lemmas. We fix a number $\alpha \in (0,1/3)$  close to $\frac{1}{3}$  which will be determined later. 
\begin{lemma}\label{decay}
\begin{align*}
\left| \int^{\pm \infty}_{\pm \varepsilon^{-\alpha}}|t|d\mu  \right| = o(\varepsilon^{4\alpha}) \quad \text{and} \quad \left| \int^{\pm \infty}_{\pm \varepsilon^{-\alpha}}t^{2}d\mu  \right| = o(\varepsilon^{3\alpha}),
\end{align*}
as $\varepsilon \to 0$. 
\end{lemma}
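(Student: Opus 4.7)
The plan is a straightforward moment-truncation argument, using only the hypothesis $\int_{\mathbb{R}} |t|^{5}\, d\mu < \infty$. Set $R := \varepsilon^{-\alpha}$, so that $\varepsilon^{k\alpha} = R^{-k}$ and the claimed bounds translate into $\int_{R}^{\infty} |t|\, d\mu = o(R^{-4})$ and $\int_{R}^{\infty} t^{2}\, d\mu = o(R^{-3})$. For $|t| \geq R$ one has the pointwise comparisons $|t| \leq |t|^{5}/R^{4}$ and $t^{2} \leq |t|^{5}/R^{3}$, and integrating gives
$$
\int_{R}^{\infty} |t|\, d\mu \;\leq\; R^{-4} \int_{R}^{\infty} |t|^{5}\, d\mu, \qquad \int_{R}^{\infty} t^{2}\, d\mu \;\leq\; R^{-3} \int_{R}^{\infty} |t|^{5}\, d\mu.
$$
By the dominated convergence theorem, the tail $\int_{R}^{\infty} |t|^{5}\, d\mu$ tends to $0$ as $R \to \infty$, so the right-hand sides are $o(R^{-4})$ and $o(R^{-3})$ respectively. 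The same argument, applied on $(-\infty, -R]$, handles the other tail; the outer absolute values in the statement are only present to account for the reversed limits of integration in $\int_{-R}^{-\infty}$.

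There is essentially no obstacle here: the lemma is of the same character as, and a direct strengthening of, the observation $R^{5}\int_{R}^{\infty} d\mu = o(1)$ already recorded in (\ref{tends-to-zero}). The only reason to isolate it as a separate statement seems to be that the specific decay rates $o(\varepsilon^{4\alpha})$ and $o(\varepsilon^{3\alpha})$ are the quantitative inputs needed in the subsequent approximation arguments for the necessity part of Theorem~\ref{mth1}, where $\alpha$ is chosen close to $1/3$ in order to keep these two errors simultaneously small compared to $\varepsilon$.
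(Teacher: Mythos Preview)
Your proof is correct and is essentially identical to the paper's own argument: the paper also bounds $|t|$ by $\varepsilon^{4\alpha}|t|^{5}$ on the tail $|t|\geq \varepsilon^{-\alpha}$, integrates, and invokes the vanishing of the fifth-moment tail to conclude $o(\varepsilon^{4\alpha})$ (and analogously for the second estimate). Your version is slightly more explicit, but the method is the same.
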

\begin{proof}
We have
\begin{align*}
\left| \int^{\pm \infty}_{\pm \varepsilon^{-\alpha}}|t|d\mu  \right|\leq \varepsilon^{4\alpha} \left| \int^{\pm \infty}_{\pm \varepsilon^{-\alpha}}|t|^{5}d\mu \right| = o(\varepsilon^{4\alpha}).
\end{align*}
Similarly for the second integral. 
\end{proof}

Let $(u,v)$ be the point in the interior of $\Omega$.  Let  $H_{u}=H_{u}(u,v)$ and $H_{v}=H_{v}(u,v)$.
\begin{lemma}\label{first-form}
If $H$ satisfies (\ref{ragaca}) then 
\begin{align}\label{main-form}
\frac{p\frac{H_{vv}}{H_{v}^{2}}+pq + q \frac{H_{uu}}{H_{u}^{2}}+\frac{H_{uu}H_{vv}-H_{uv}^{2}}{H_{u}^{2}H_{v}^{2}}}{ \frac{H_{uu}}{H_{u}^{2}} +p-2\frac{H_{uv}}{H_{u}H_{v}}
 +\frac{H_{vv}}{H_{v}^{2}}+q } \geq pa^{2}+qb^{2}
\end{align}
for all real numbers $p$ and $q$ such that  $p+q+\frac{H_{uu}}{H_{u}^{2}}-2\frac{H_{uv}}{H_{u} H_{v}} + \frac{H_{vv}}{H_{v}^{2}} < 0$. 
\end{lemma}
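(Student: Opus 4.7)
The plan is to test (\ref{ragaca}) with quadratic perturbations of the constant pair $(f,g) \equiv (u,v)$. For $\varepsilon > 0$ small, I take $f(x) = u + \varepsilon F(x)$ and $g(x) = v + \varepsilon G(x)$, where $F, G$ are smooth truncations at scale $|s| \leq \varepsilon^{-\alpha}$ of the quadratic polynomials
\[
F(s) = A_1 + B_1 s + \tfrac{P_1}{2} s^2, \qquad G(s) = A_2 + B_2 s + \tfrac{P_2}{2} s^2,
\]
with $A_i, B_i, P_i \in \mathbb{R}$ to be chosen. Since $(u,v)$ lies in the interior of $\Omega$ and the perturbation has size $O(\varepsilon^{1-2\alpha}) = o(1)$, the pair $(f,g)$ takes values in $\Omega$ for all $\varepsilon$ sufficiently small. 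The curvature pair $(P_1, P_2)$ will play the role of the Lemma's $(p, q)$ up to the natural rescalings $p \leftrightarrow P_1 H_u/a^2$, $q \leftrightarrow P_2 H_v/b^2$.

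\noindent\textbf{Expansion and essential supremum.} A second-order Taylor expansion of $H$ around $(u,v)$ makes the integrand $H(u,v) + \varepsilon \ell(x,y) + \varepsilon^2 \kappa(x,y) + O(\varepsilon^3)$, with $\ell(x,y) = H_u F((x-y)/a) + H_v G(y/b)$ a quadratic polynomial in $y$ whose leading coefficient is $\tfrac{1}{2}(P_1 H_u/a^2 + P_2 H_v/b^2)$, and with $\kappa(x,y) = \tfrac{1}{2}[H_{uu} F^2 + 2 H_{uv} FG + H_{vv} G^2]$ evaluated at the appropriate arguments. I restrict to parameters with the leading coefficient of $\ell(x,\cdot)$ negative (this corresponds, up to the correction coming from the Hessian of $H$, to the Lemma's admissibility $\mathrm{den} < 0$), so that $\ell(x,\cdot)$ opens downward and is maximized at an explicit $y^*(x) = \theta + \sigma x$ affine in $x$. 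A standard envelope argument then yields
\[
\esssup_y \bigl[\varepsilon \ell(x,y) + \varepsilon^2 \kappa(x,y) + O(\varepsilon^3)\bigr] \;=\; \varepsilon M(x) + \varepsilon^2 \kappa(x, y^*(x)) + o(\varepsilon^2),
\]
where $M(x) := \ell(x, y^*(x))$ is a quadratic polynomial in $x$.

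\noindent\textbf{Integration and matching.} Integrating against $d\mu$ — and using Lemma~\ref{decay} to bound the truncation-induced tail contributions as $o(\varepsilon^2)$, which is exactly why $\alpha$ is taken close to $1/3$ — and also expanding the right-hand side $H(u + \varepsilon \bar F, v + \varepsilon \bar G)$ in $\varepsilon$, the inequality (\ref{ragaca}) becomes
\[
\varepsilon\Bigl[\int M\,d\mu - H_u \bar F - H_v \bar G\Bigr] + \varepsilon^2\Bigl[\int \kappa(x, y^*(x))\,d\mu - \tfrac{1}{2}\bigl(H_{uu}\bar F^2 + 2 H_{uv}\bar F\bar G + H_{vv}\bar G^2\bigr)\Bigr] + o(\varepsilon^2) \;\geq\; 0.
\]
Now I tune the linear coefficients $B_1, B_2$ so that the order-$\varepsilon$ bracket equals zero: this bracket is a positive quadratic in $\alpha - \beta$ (with $\alpha = H_u B_1/a, \beta = H_v B_2/b$) plus a constant whose sign is controlled by the $|1-a^2-b^2| \leq 2ab$ part of (\ref{numbers}), which guarantees a real solution in $(B_1, B_2)$ throughout the admissible regime. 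Under this tuning the order-$\varepsilon^2$ bracket must be $\geq 0$, and after substituting the explicit expressions for $M$, $y^*$, $\kappa(x, y^*(x))$ and for $B_1, B_2$, direct algebra reduces the resulting polynomial inequality in $(P_1, P_2, m, m_2, H_u, H_v, H_{uu}, H_{uv}, H_{vv}, a, b)$ to precisely (\ref{main-form}).

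\noindent\textbf{Main obstacle.} The heart of the argument is the order-$\varepsilon$ cancellation, without which the order-$\varepsilon^2$ conclusion is vacuous. Achieving it uniformly across the admissible parameter range is where the moments $m = \int x\,d\mu$ and $m_2 = \int x^2\,d\mu$ interact with $(a,b)$; in particular, the matching forces $m = 0$ when $a+b \neq 1$, which is the moment condition in (\ref{numbers}). A secondary difficulty is the book-keeping that identifies the order-$\varepsilon^2$ bracket with (\ref{main-form}), once the auxiliary higher moments entering through $\int F((x-y^*(x))/a)^2\,d\mu$ and its analogues have been cancelled by the tuning of $B_1, B_2$.
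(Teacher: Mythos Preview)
Your approach has a genuine gap at the step you yourself flag as the ``main obstacle'': the order-$\varepsilon$ bracket cannot be tuned to zero. With $A_1=A_2=0$ (they cancel anyway) and $m:=\int x\,d\mu=0$ (which you correctly note is forced when $a+b>1$), a direct computation gives, in your notation $\delta_i=H_uB_i/a,\ H_vB_i/b$ and $r=H_uP_1,\ s=H_vP_2$,
\[
\int M\,d\mu - H_u\bar F - H_v\bar G \;=\; \frac{(\delta_1-\delta_2)^2}{2(\gamma_1+\gamma_2)} \;+\; \frac{m_2}{2}\cdot\frac{-\bigl(b^2r^2+(a^2+b^2-1)rs+a^2s^2\bigr)}{b^2r+a^2s},
\]
where $\gamma_1+\gamma_2=-(r/a^2+s/b^2)>0$ by your downward-opening assumption. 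The first term is $\ge 0$. For the second, note that $b^2r+a^2s<0$, while the quadratic form $b^2r^2+(a^2+b^2-1)rs+a^2s^2$ is \emph{positive definite} precisely when $|1-a^2-b^2|<2ab$. Hence the constant term is strictly \emph{positive}, not $\le 0$ as you assert, and no choice of $B_1,B_2$ makes the bracket vanish. Consequently the inequality is satisfied at order $\varepsilon$ with strict margin, and you extract nothing at order $\varepsilon^2$. (Separately, invoking $|1-a^2-b^2|\le 2ab$ here is circular: that constraint is part of the conclusion of the necessity argument, derived only after Lemma~\ref{first-form} via the next lemma.)

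The structural point you are missing is the one the paper's choice of test functions is engineered to exploit: take the order-$\varepsilon$ perturbation to be \emph{linear} (namely $\varphi(as)/H_u$ and $\varphi(bs)/H_v$), so that after the change $\tilde x=a\xi,\ \tilde y=b\eta$ the first-order integrand becomes $\varphi(\tilde x)+\varphi(\tilde y)$, which is \emph{constant} $(=t)$ on the constraint line $\tilde x+\tilde y=t$. This degeneracy makes the first-order supremum trivial and forces the optimization to occur at order $\varepsilon^2$, where the free parameters $p,q$ are introduced as \emph{separate} $\varepsilon^2$-coefficients (not as curvatures of an order-$\varepsilon$ quadratic). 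The sup of the resulting quadratic form on the line $\tilde x+\tilde y=t$ is exactly the left side of (\ref{main-form}) times $t^2$, and the condition for that sup to be finite is exactly the admissibility condition on $p,q$ in the lemma. Your ansatz $f=u+\varepsilon F$ with $F$ quadratic collapses these two scales into one and thereby loses the mechanism.
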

\begin{proof}

Let $\delta>1$ be a number  which will be determined later.  

 We consider the following test functions $(f,g)$: 
\begin{align}
&f(x) = u + \varepsilon \frac{ \varphi_{\varepsilon,\delta}(a\, x)}{H_{u}}+\varepsilon^{2} \frac{p\, \varphi^{2}_{\varepsilon,\delta}(a\, x)}{2H_{u}};\label{test-1}\\
&g(y) = u + \varepsilon \frac{ \varphi_{\varepsilon,\delta}(b\, y)}{H_{v}}+\varepsilon^{2} \frac{q\, \varphi^{2}_{\varepsilon,\delta}(b\, y)}{2H_{v}};\label{test-2}\\
&\text{where} \quad \varphi_{\varepsilon,\delta}(t) = 
\begin{cases}
-\delta \varepsilon^{-\alpha}&  t\leq -\delta \varepsilon^{-\alpha}\;;\\
t & -\delta \varepsilon^{-\alpha} \leq t \leq \varepsilon^{-\alpha}\;;\\
\varepsilon^{-\alpha} &   \varepsilon^{-\alpha} \leq t\,.
\end{cases} \label{test-3}
\end{align} 
We notice that $|\varphi_{\varepsilon,\delta}(t)| \leq \delta \varepsilon^{-\alpha}$. 
Since $\alpha <1$ it is clear that  $f:\mathbb{R}\to I$ and $g : \mathbb{R} \to J$ for all $0\leq  \varepsilon \leq \varepsilon_{0}$  where $\varepsilon_{0}$ is a small number. Ideally we want to choose $\varphi_{\varepsilon, \delta}(t) =t$ for all $t \in \mathbb{R}$ but then the image of $(f,g)$ will escape from the rectangle $\Omega$. 

Let $\int_{\mathbb{R}} t d\mu = \tau$ and $\int_{\mathbb{R}} t^{2} d\mu(t)=\beta$. Choose $\alpha \in (0,\frac{1}{3})$ so that $4\alpha>1$.  Notice that for each fixed $\delta>1$  by  (\ref{tends-to-zero}) and Lemma~\ref{decay} we have 
\begin{align*}
\int_{\mathbb{R}}f d\mu = u+\varepsilon \frac{a\tau }{H_{u}}+\varepsilon^{2}\frac{ p a^{2} }{2H_{u}} \beta  + o(\varepsilon^{2}) \quad \text{and} \quad \int_{\mathbb{R}}g d\mu = v+\varepsilon \frac{b\tau }{H_{v}}+\varepsilon^{2}\frac{ q b^{2} }{2H_{v}} \beta  + o(\varepsilon^{2}) \quad \text{as} \quad \varepsilon \to 0.
\end{align*}
Using the fact that $H \in C^{3}(I\times J)$ by Taylor's formula we obtain 
\begin{align*}
&H(s,t)=H(u,v)+(s-u)H_{u}+(t-v)H_{v}+\frac{1}{2}\left((s-u)^{2}H_{uu}+2(s-u)(t-v)H_{uv}+(t-v)^{2}H_{vv} \right)\\
&+O((|s-u|+|t-v|)^{3}).
\end{align*}
Taking $s=\int f d\mu$ and $t = \int g d\mu$ we obtain
\begin{align}
&H\left(\int_{\mathbb{R}} f d\mu, \int_{\mathbb{R}} g d\mu \right)=H(u,v)+\varepsilon \tau (a+b)+\frac{\varepsilon^{2}}{2} \beta  (p a^{2} +qb^{2})\label{latala}\\
&+\frac{1}{2}\varepsilon^{2} \tau^{2}\left(a^{2}\frac{H_{uu}}{H_{u}^{2}}+2ab\frac{H_{uv}}{H_{u}H_{v}}+b^{2}\frac{H_{vv}}{H_{v}^{2}} \right)+
o(\varepsilon^{2}) \quad \text{as} \quad \varepsilon \to 0. \nonumber
\end{align}
On the other taking $s=f(x)$ and $t=g(y)$ we obtain  
\begin{align*}
&H(f(x),g(y))=H(u,v)+\varepsilon(\varphi_{\varepsilon,\delta}(a\,x)+\varphi_{\varepsilon,\delta}(b\,y))+\\
&\frac{\varepsilon^{2}}{2}  \left(p\varphi^{2}_{\varepsilon,\delta}(a\,x)+ q\varphi^{2}_{\varepsilon,\delta}(b\,y)+ \frac{H_{uu}}{H_{u}^{2}} \varphi^{2}_{\varepsilon,\delta}(a\,x)+2\frac{H_{uv}}{H_{u} H_{v}}\varphi_{\varepsilon,\delta}(a\,x)\varphi_{\varepsilon,\delta}(b\,y)
 +\frac{H_{vv}}{H_{v}^{2}}\varphi^{2}_{\varepsilon,\delta}(b\,y)\right)+O(\varepsilon^{3(1-\alpha)}).
\end{align*}
Since $\alpha <1/3$ we have  $O(\varepsilon^{3(1-\alpha)}) = o(\varepsilon^{2})$. First we should compare small order terms in order to get a restriction on $\tau$. 

Since $f,g$ are continuous clearly the essential supremum of the integrand in (\ref{ragaca}) becomes $\sup_{ax+by=z} H(f(x),g(y))$. 
Thus, introducing new variables $\tilde{x} = a\,x, \tilde{y}=b\,y$ and using the fact that supremum of the sum is at most the sum of the supremums,  we obtain that  (\ref{ragaca}) implies   the inequality 
\begin{align}\label{miss1}
\varepsilon \int_{\mathbb{R}}\sup_{x+y=t} (\varphi_{\varepsilon,\delta}(x)+\varphi_{\varepsilon,\delta}(y)) d\mu(t) \geq \varepsilon \tau(a+b) + o(\varepsilon).
\end{align}

Notice that $\sup_{x+y=t} (\varphi_{\varepsilon,\delta}(x)+\varphi_{\varepsilon,\delta}(y)) = t$ for $t \in [(1-\delta)\varepsilon^{-\alpha}, 2\varepsilon^{-\alpha}]$,  and it is bounded as $C \varepsilon^{-\alpha}$ otherwise. Therefore (\ref{miss1}) implies that $\tau \geq (a+b)\tau$. On the other hand we can considered the new test functions $\tilde{\varphi}_{\varepsilon,\delta}=-\varphi_{\varepsilon,\delta}$, and  we can  obtain the opposite inequality $-\tau \geq -(a+b)\tau$. This implies that if $a+b>1$ then $\tau=0$. Notice that in the case $a+b=1$, without loss of generality, we can assume that $\int_{\mathbb{R}} t d\mu=0$. Indeed, we can test inequality (\ref{weaker}) on the translated functions $f_{c}(x)=f(x-c) $ and $g_{c}(y)=g(y-c)$. After change of variables in (\ref{weaker}), and using $ac+bc=c$, we obtain that (\ref{weaker}) holds with initial test functions $f,g$ and shifted measure $\mu_{c}(\cdot ) = \mu(\cdot+c)$. Clearly we can choose $c \in \mathbb{R}$ so that $\int_{\mathbb{R}}t d\mu_{c}(t)=0$.

In what follows we assume $\tau=0$, and therefore,  the terms involving $\tau$ in  (\ref{latala}) are zero. Inequality  (\ref{ragaca}) implies that 
\begin{align}
&\varepsilon \int_{\mathbb{R}}\sup_{x+y=t} (\varphi_{\varepsilon,\delta}(x)+\varphi_{\varepsilon,\delta}(y)) d\mu(t) +\nonumber\\
&\frac{\varepsilon^{2}}{2} \int_{\mathbb{R}}\sup_{x+y=t}\left[ \left( \frac{H_{uu}}{H_{u}^{2}} +p\right)\varphi^{2}_{\varepsilon,\delta}(x)+2\frac{H_{uv}}{H_{u}H_{v}}\varphi_{\varepsilon,\delta}(x)\varphi_{\varepsilon,\delta}(y)
 +\left(\frac{H_{vv}}{H_{v}^{2}}+q \right)\varphi^{2}_{\varepsilon,\delta}(y)\right]d\mu(t)  \geq  \nonumber \\
 &\frac{\varepsilon^{2}}{2} \beta (pa^{2} +  qb^{2}) + o(\varepsilon^{2}). \label{final-inequality}
\end{align}

 Since $\int_{\mathbb{R}} t d\mu=0$, (\ref{tends-to-zero}) and Lemma~\ref{decay}  we obtain  
\begin{align*}
\int_{\mathbb{R}}\sup_{x+y=t} (\varphi_{\varepsilon,\delta}(x)+\varphi_{\varepsilon,\delta}(y)) d\mu(t) = o(\varepsilon) \quad \text{as} \quad \varepsilon \to 0.
\end{align*}
Set 
\begin{align*}
\psi_{\varepsilon,\delta}(t) \overset{\mathrm{def}}{=}\sup_{x+y=t}\left[ \left( \frac{H_{uu}}{H_{u}^{2}} +p\right)\varphi^{2}_{\varepsilon,\delta}(x)+2\frac{H_{uv}}{H_{u}H_{v}}\varphi_{\varepsilon,\delta}(x)\varphi_{\varepsilon,\delta}(y)
 +\left(\frac{H_{vv}}{H_{v}^{2}}+q \right)\varphi^{2}_{\varepsilon,\delta}(y)\right].
\end{align*}
We remind that $p$ and $q$ are chosen in such a way that $p+\frac{H_{uu}}{H_{u}^{2}}+q+ \frac{H_{vv}}{H_{v}^{2}}-2\frac{H_{uv}}{H_{u} H_{v}}  < 0$. 

We need the following lemma. 
\begin{lemma}\label{second-form}
Let $\delta>1$ be such that   for all $s$,  $\frac{1}{\delta} \leq s \leq \delta$ we have 
\begin{align}\label{about-delta}
 \left( p+\frac{H_{uu}}{H_{u}^{2}}\right)s^{2}-2\frac{H_{uv}}{H_{u} H_{v}}s+q+ \frac{H_{vv}}{H_{v}^{2}} <0,
\end{align}
then there exist sufficiently small positive constants $c$ and $\varepsilon_{0}>0$ such that 
\begin{align}\label{convergence}
\psi_{\varepsilon,\delta}(t)  =\frac{p\frac{H_{vv}}{H_{v}^{2}}+pq + q \frac{H_{uu}}{H_{u}^{2}}+\frac{H_{uu}H_{vv}-H_{uv}^{2}}{H_{u}^{2}H_{v}^{2}}}{ \frac{H_{uu}}{H_{u}^{2}} +p-2\frac{H_{uv}}{H_{u}H_{v}}
 +\frac{H_{vv}}{H_{v}^{2}}+q } \cdot t^{2},
\end{align}
for all real $|t| \leq c \varepsilon^{-\alpha}$ and all $\varepsilon \leq \varepsilon_{0}$.   
\end{lemma}
Before we proceed to the proof of the lemma, let us mention that Lemma~\ref{first-form}  follows from Lemma~\ref{second-form}. Indeed, first we choose $\delta>1$ such that (\ref{about-delta}) holds. Such choice is possible because of the continuity and the assumption on the numbers $p$ and $q$. Lemma~\ref{second-form}, (\ref{final-inequality}), (\ref{tends-to-zero}), Lemma~\ref{decay} and the fact that $|\psi_{\varepsilon,\delta}(t)| \leq C\varepsilon^{-2\alpha}$ on the complement of the interval $[-c\varepsilon^{-\alpha}, c\varepsilon^{-\alpha}]$ imply (\ref{main-form}). 
Thus it remains to prove Lemma~\ref{second-form}.
\begin{proof}
Set 
\begin{align*}
w(x,y)= \left( \frac{H_{uu}}{H_{u}^{2}} +p\right)\varphi^{2}_{\varepsilon,\delta}(x)+2\frac{H_{uv}}{H_{u}H_{v}}\varphi_{\varepsilon,\delta}(x)\varphi_{\varepsilon,\delta}(y)
 +\left(\frac{H_{vv}}{H_{v}^{2}}+q \right)\varphi^{2}_{\varepsilon,\delta}(y).
\end{align*}
 We should describe behavior of $w(x,y)$ on the red line $x+y=t$ (see Figure~\ref{fig:dom}).   If 
$2\varepsilon^{-\alpha}\geq t \geq (1-\delta) \varepsilon^{-\alpha}$  then the line $x+y=t$ will cross the sides $DA$  and $DC$ of the rectangle $ABCD$ as it is shown on Figure~\ref{fig:dom}.

\begin{figure}
\includegraphics[scale=1]{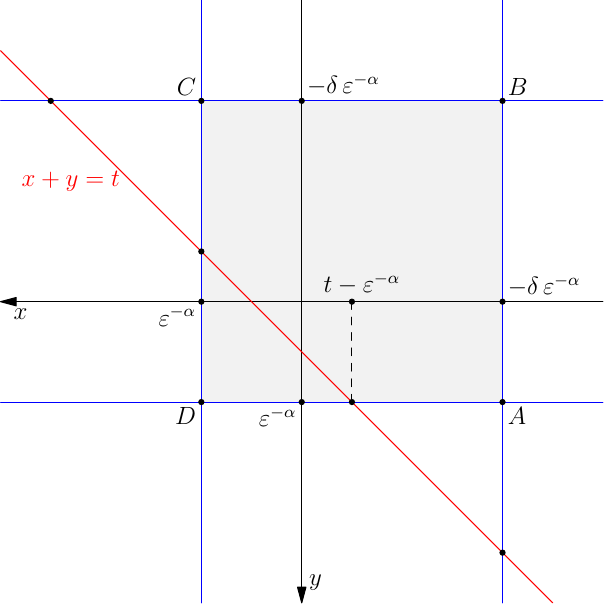}
\caption{Domain of the function $w(x,y)$}
\label{fig:dom}
\end{figure}
We have 
\begin{align*}
w(x,t-x) = 
\begin{cases}
\varepsilon^{-2\alpha} \left[ \left( \frac{H_{uu}}{H_{u}^{2}} +p\right)\delta^{2}-2\frac{H_{uv}}{H_{u}H_{v}}\delta
 +\left(\frac{H_{vv}}{H_{v}^{2}}+q \right)\right] & x \leq -\delta \varepsilon^{-\alpha};\\
  \left( \frac{H_{uu}}{H_{u}^{2}} +p\right)x^{2}+2\frac{H_{uv}}{H_{u}H_{v}}x \varepsilon^{-\alpha}
 +\left(\frac{H_{vv}}{H_{v}^{2}}+q \right) \varepsilon^{-2\alpha}  & t - \varepsilon^{-\alpha} \geq x  \geq -\delta \varepsilon^{-\alpha};\\
  \left( \frac{H_{uu}}{H_{u}^{2}} +p\right)x^{2}+2\frac{H_{uv}}{H_{u}H_{v}}x (t-x)
 +\left(\frac{H_{vv}}{H_{v}^{2}}+q \right)(t-x)^{2}  & t - \varepsilon^{-\alpha} \leq x  \leq \varepsilon^{-\alpha};\\
  \left( \frac{H_{uu}}{H_{u}^{2}} +p\right)\varepsilon^{-2\alpha}+2\frac{H_{uv}}{H_{u}H_{v}} \varepsilon^{-\alpha}(t-x)
 +\left(\frac{H_{vv}}{H_{v}^{2}}+q \right) (t-x)^{2}  & \varepsilon^{-\alpha} \leq x \leq t+\delta \varepsilon^{-\alpha};\\
 \varepsilon^{-2\alpha} \left[ \left( \frac{H_{uu}}{H_{u}^{2}} +p\right)-2\frac{H_{uv}}{H_{u}H_{v}}\delta
 +\left(\frac{H_{vv}}{H_{v}^{2}}+q \right) \delta^{2}\right] & x \geq t+\delta \varepsilon^{-\alpha}.
 \end{cases}
\end{align*}
Notice that because of the assumption (\ref{about-delta}) the values of $w(x,t-x)$ approach to negative infinity as $-c \varepsilon^{-2\alpha}$ if $x \leq -\delta \varepsilon^{-\alpha}$ or $x \geq t+\delta \varepsilon^{-\alpha}$ where $c>0$ is some constant.  

If $t - \varepsilon^{-\alpha} \geq x \geq -\delta \varepsilon^{-\alpha}$ then let us reparametrize the function $w(x,t-x)$ as follows $x= -\varepsilon^{-\alpha}s$ where $1-t \varepsilon^{\alpha} \leq s \leq \delta$.  Then 
\begin{align*}
w(-\varepsilon^{-\alpha}s, t+\varepsilon^{-\alpha}s)= \varepsilon^{-2\alpha} \left[ \left( \frac{H_{uu}}{H_{u}^{2}} +p\right)s^{2}-2\frac{H_{uv}}{H_{u}H_{v}}s
 +\left(\frac{H_{vv}}{H_{v}^{2}}+q \right)\right].
\end{align*}

Clearly if $t \leq (1-\frac{1}{\delta})\varepsilon^{-\alpha}$  then $1-t \varepsilon^{\alpha} \geq \frac{1}{\delta}$ and  by (\ref{about-delta}) the maximal value of $w(x,t-x)$ behaves as $-c\varepsilon^{-2\alpha}$ for some $c>0$ on the interval $t-\varepsilon^{-\alpha} \geq x \geq -\delta\, \varepsilon^{-\alpha}$. Behavior of $w(x,t-x)$ on the interval $\varepsilon^{-\alpha} \leq x \leq t+ \delta  \varepsilon^{-\alpha}$ is completely symmetric to the previous case. 

Since the value of the map $x \mapsto w(x,t-x)$ goes to negative infinity at the endpoints of the interval $[t-\varepsilon^{-\alpha}, \varepsilon^{-\alpha}]$ as $\varepsilon \to 0$, one can check that the maximum of the function $w(x,t-x)$ is attained inside of the interval $[t-\varepsilon^{-\alpha}, \varepsilon^{-\alpha}]$  at the   point
\begin{align*}
x_{0} = -\frac{\left( \frac{H_{uv}}{H_{u}H_{v}}-\frac{H_{vv}}{H_{v}^{2}}-q\right)t}{\frac{H_{uu}}{H_{u}^{2}}+p-2\frac{H_{uv}}{H_{u}H_{v}}+\frac{H_{vv}}{H_{v}^{2}}+q}.
\end{align*}

Thus,  we have 
\begin{align}\label{foru-supremum}
\sup_{x \in \mathbb{R}} w(x,t-x) = \max_{x \in [t-\varepsilon^{-\alpha}, \varepsilon^{-\alpha}]} w(x,t-x)=w(x_{0}, t-x_{0})=
\frac{p\frac{H_{vv}}{H_{v}^{2}}+pq + q \frac{H_{uu}}{H_{u}^{2}}+\frac{H_{uu}H_{vv}-H_{uv}^{2}}{H_{u}^{2}H_{v}^{2}}}{ \frac{H_{uu}}{H_{u}^{2}} +p-2\frac{H_{uv}}{H_{u}H_{v}}
 +\frac{H_{vv}}{H_{v}^{2}}+q } \cdot t^{2},
\end{align}
for all $|t| \leq c \varepsilon^{-\alpha}$ and all $\varepsilon \leq \varepsilon_{0}$  where $\varepsilon_{0}$ and $c$ are  some sufficiently small numbers. Therefore,  we obtain (\ref{convergence}).

\end{proof}

\end{proof}

\begin{lemma}
Inequality (\ref{main-form}) holds for all real $p$ and  $q$  with $p+q+\frac{H_{uu}}{H_{u}^{2}}-2\frac{H_{uv}}{H_{u}H_{v}}+\frac{H_{vv}}{H_{v}^{2}} <0$ if and only if  
\begin{align*}
 \left(\frac{1-a^{2}-b^{2}}{2ab}\right)^{2} \leq 1 \quad \text{and} \quad a^{2}\frac{H_{uu}}{H_{u}^{2}} + (1-a^{2}-b^{2})\frac{H_{uv}}{H_{u}H_{v}}+b^{2}\frac{H_{vv}}{H_{v}^{2}} \geq 0. 
\end{align*}
\end{lemma}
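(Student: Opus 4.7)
The plan is to clear the denominator of (\ref{main-form}) by a clean affine substitution that turns the rational expression into a quadratic polynomial inequality on a half-plane, and then to read off the two conditions by elementary analysis of that quadratic.

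Write $A = H_{uu}/H_{u}^{2}$, $B = H_{uv}/(H_{u}H_{v})$, $C = H_{vv}/H_{v}^{2}$, and observe the identities
\[
p C + p q + q A + A C - B^{2} = (p+A)(q+C) - B^{2}, \qquad A + p - 2B + C + q = (p+A) + (q+C) - 2B.
\]
The substitution $P = p + A - B$, $R = q + C - B$ is an affine bijection carrying $\{p+q+A-2B+C<0\}$ onto $\{P+R<0\}$. Under it the numerator becomes $PR + B(P+R)$, the denominator becomes $P+R$, and after a short computation the inequality (\ref{main-form}) takes the form
\[
K \;\geq\; a^{2} P + b^{2} R - \frac{PR}{P+R}, \qquad P+R<0,
\]
where $K := a^{2} A + (1-a^{2}-b^{2}) B + b^{2} C$ is exactly the quantity whose nonnegativity is claimed.

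Since $P+R<0$, multiplying through and rearranging shows this is equivalent to
\[
G(P,R) \;:=\; F(P,R) + K\,|P+R| \;\geq\; 0 \quad \text{on } \{P+R<0\},
\]
where $F(P,R) = a^{2} P^{2} + (a^{2}+b^{2}-1)\,PR + b^{2} R^{2}$ is a homogeneous quadratic form whose discriminant factors as
\[
(a^{2}+b^{2}-1)^{2} - 4 a^{2} b^{2} = ((a+b)^{2}-1)\,((a-b)^{2}-1).
\]
Hence $F$ is positive semidefinite precisely when $|1-a^{2}-b^{2}| \leq 2ab$, which is the first condition in the claim. The sufficiency direction is then immediate: if $F \geq 0$ everywhere and $K \geq 0$, both summands in $G$ are nonnegative on the half-plane.

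For necessity I would argue by contradiction in two directions. If $K<0$, testing at $(P,R)=(-\lambda,0)$ gives $G = a^{2}\lambda^{2} + K\lambda < 0$ for small $\lambda>0$. If $F$ is not PSD, one checks that $F(t,-t) = t^{2} \geq 0$, so any $(P_{0},R_{0})$ with $F(P_{0},R_{0})<0$ must satisfy $P_{0}+R_{0} \neq 0$; replacing $(P_{0},R_{0})$ by $-(P_{0},R_{0})$ if necessary, one may assume $P_{0}+R_{0}<0$, and then $G(\lambda P_{0},\lambda R_{0}) = \lambda^{2} F(P_{0},R_{0}) + \lambda K |P_{0}+R_{0}|$ tends to $-\infty$ as $\lambda\to\infty$, contradicting $G\geq 0$. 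The only real obstacle is bookkeeping: one must verify that the substitution is genuinely lossless between the two ranges, and one must notice that $F$'s nonnegativity along the forbidden line $P+R=0$ is precisely what guarantees that a sign failure of $F$ can be witnessed \emph{inside} the open half-plane, where the inequality actually lives.
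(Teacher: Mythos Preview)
Your argument is correct. The substitution $P=p+A-B$, $R=q+C-B$ does exactly what you claim: the numerator becomes $PR+B(P+R)$, the denominator becomes $P+R$, and after moving the right-hand side across one lands precisely on $K\ge a^{2}P+b^{2}R-PR/(P+R)$ with $K=a^{2}A+(1-a^{2}-b^{2})B+b^{2}C$. Clearing $P+R<0$ gives your $G(P,R)=F(P,R)+K|P+R|\ge 0$, and the sufficiency/necessity analysis (small-$\lambda$ test for $K<0$, large-$\lambda$ scaling for $F$ not PSD, with the observation $F(t,-t)=t^{2}$ to keep the witness inside the open half-plane) is clean and complete. Note that the first condition you recover is the squared one, $\bigl(\tfrac{1-a^{2}-b^{2}}{2ab}\bigr)^{2}\le 1$, matching the statement of Theorem~\ref{mth1}; the missing exponent in the lemma as written is a typo.

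The paper's own proof stays in the original $(p,q)$ coordinates: it expands $M(p,q)$ as a quadratic with Hessian $\begin{pmatrix} a^{2} & (a^{2}+b^{2}-1)/2\\ (a^{2}+b^{2}-1)/2 & b^{2}\end{pmatrix}$, observes that nonnegativity on the half-plane forces this matrix to be PSD, checks by direct computation that $M$ is a perfect square along the boundary line, and then locates the vertex $(p_{0},q_{0})$ of the paraboloid, computes $M(p_{0},q_{0})$ explicitly (it is $K^{2}$ divided by a negative constant), and shows that the vertex lies outside the half-plane exactly when $K\ge 0$. Your affine change of variables buys a real simplification: it sends the half-plane to a cone through the origin, so the homogeneous part $F$ and the linear part $K|P+R|$ decouple and the two conditions fall out by one-line limiting arguments, with no need to solve for the vertex or evaluate $M$ there. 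The paper's route, by contrast, makes the structure of the extremal point visible (it identifies the unique minimizer of $M$), which your scaling argument bypasses.
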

\begin{proof}
Let us rewrite (\ref{main-form}) as follows 
\begin{align*}
&M(p,q)=(p,q) \;C\; (p,q)^{T}+p \left[ a^{2} \left( \frac{H_{uu}}{H_{u}^{2}}-2\frac{H_{uv}}{H_{u} H_{v}}\right)+(a^{2}-1)\frac{H_{vv}}{H_{v}^{2}}\right]+\\
&q\left[b^{2}\left( \frac{H_{vv}}{H_{v}^{2}}-2\frac{H_{uv}}{H_{u} H_{v}}\right)+(b^{2}-1)\frac{H_{uu}}{H_{u}^{2}} \right]-\frac{H_{uu}H_{vv}-H_{uv}^{2}}{H_{u}^{2}H_{v}^{2}}\geq 0
\end{align*}
on the half plane 
\begin{align}\label{halfp}
p+q+\frac{H_{uu}}{H_{u}^{2}}-2\frac{H_{uv}}{H_{u}H_{v}}+\frac{H_{vv}}{H_{v}^{2}} <0,
\end{align}
 where 
\begin{align*}
C = \begin{pmatrix}
a^{2} & \frac{a^{2}+b^{2}-1}{2}\\
\frac{a^{2}+b^{2}-1}{2} & b^{2}
\end{pmatrix}.
\end{align*}
In order for the quadric form $M(p,q)$ to be nonnegative on the half plane it is necessary that $C\geq 0$. Indeed, suppose there is $(p_{0}, q_{0}) \neq (0,0)$ such that $(p_{0}, q_{0})C(p_{0}, q_{0})^{T}<0$. Without loss of generality we can assume that $p_{0}+q_{0}<0$, otherwise if $p_{0}+q_{0}\geq 0$ we can consider a new pair $(\tilde{p}, \tilde{q})=(-p_{0}, -q_{0})$ and perturb it slightly, if necessary, to ensure that $\tilde{p}+\tilde{q}<0$. Finally taking $(p_{\lambda}, q_{\lambda}):=\lambda (p_{0}, q_{0})$ we can choose $\lambda>0$ sufficiently large so that (\ref{halfp}) holds. On the other hand $\lim_{\lambda \to \infty}\frac{M(p_{\lambda}, q_{\lambda})}{\lambda^{2}}= (p_{0}, q_{0})C (p_{0}, q_{0})^{T}<0$. Thus we must have $C\geq 0$, and the latter condition, namely, $\mathrm{det}\,  C\geq 0$ gives the constraint  on the numbers $a,b>0$. 

 Notice that $M(p,q)$ is nonnegative on the boundary of the half plane, i.e.,
\begin{align*}
M\left(-q-\frac{H_{uu}}{H_{u}^{2}}+2\frac{H_{uv}}{H_{u}H_{v}}-\frac{H_{vv}}{H_{v}^{2}}, q\right) = \frac{(H_{vv}H_{u}-H_{uv}H_{v}+H_{u}H_{v}^{2}q)^{2}}{H_{u}^{2}H_{v}^{4}}\geq 0. 
\end{align*} 

  Next we consider the case when  $\left( \frac{1-a^{2}-b^{2}}{2ab}\right)^{2}<1$.
  
Let $(p_{0},q_{0})$ be the vertex of the paraboloid $M$, i.e., $\nabla M (p_{0},q_{0})=0$. The direct computations show that 
\begin{align*}
M(p_{0},q_{0})=\frac{\left( a^{2}\frac{H_{uu}}{H_{u}^{2}}+(1-a^{2}-b^{2})\frac{H_{uv}}{H_{u}H_{v}}+b^{2}\frac{H_{vv}}{H_{v}^{2}}\right)^{2}}{4a^{2}b^{2}\left( \left(\frac{1-a^{2}-b^{2}}{2ab}\right)^{2}-1\right)},
\end{align*}
and 
\begin{align}\label{movkvdi}
p_{0}+q_{0}+\frac{H_{uu}}{H_{u}^{2}}-2\frac{H_{uv}}{H_{u}H_{v}}+\frac{H_{vv}}{H_{v}^{2}} = \frac{a^{2}\frac{H_{uu}}{H_{u}^{2}}+(1-a^{2}-b^{2})\frac{H_{uv}}{H_{u}H_{v}}+b^{2}\frac{H_{vv}}{H_{v}^{2}}}{2a^{2}b^{2} \left( 1- \left(\frac{1-a^{2}-b^{2}}{2ab}\right)^{2}\right) }.
\end{align}
Therefore $M \geq 0$ in the halfplane (\ref{halfp}) if and only if the right hand side of (\ref{movkvdi}) is nonnegative

If $\left(\frac{1-a^{2}-b^{2}}{2ab}\right)^{2}=1$ then  $\det (\mathrm{Hess})\; M=0$, therefore, $M$ is the developable surface, i.e., $M$ is linear along some straight line segments. The direction $(x_{0},y_{0})$ of these straight line segments satisfy the equation $\mathrm{Hess}\; M \, (x_{0},y_{0})^{T}=2C\, (x_{0},y_{0})^{T}=(0,0)$, i.e., $(x_{0},y_{0})=\left( \frac{a^{2}+b^{2}-1}{2}, -a^{2}\right)$ which, clearly, is not parallel to the boundary of the halfplane (\ref{halfp}). Condition $\left(\frac{1-a^{2}-b^{2}}{2ab} \right)^{2}=1$ implies that $b=|1-a|$ or $b=a+1$. If $b=|1-a|$ then $(x_{0}, y_{0})\cdot (1,1)=\frac{a^{2}+b^{2}-1}{2}-a^{2} = \frac{b^{2}-a^{2}-1}{2}<0$.  In this case
\begin{align*}
\lim_{\lambda \to \infty} \frac{M(p+\lambda x_{0},q+ \lambda y_{0})}{\lambda} =a \left(a^{2}\frac{H_{uu}}{H_{u}^{2}}+(1-a^{2}-b^{2})\frac{H_{uv}}{H_{u}H_{v}}+b^{2}\frac{H_{vv}}{H_{v}^{2}} \right),
\end{align*}
and the latter expression must be nonnegative. Finally, if $b=a+1$ then $(x_{0}, y_{0})\cdot (1,1) = \frac{b^{2}-a^{2}-1}{2}>0$. In this case 
\begin{align*}
\lim_{\lambda \to \infty} \frac{M(p-\lambda x_{0},q- \lambda y_{0})}{\lambda}=a \left(a^{2}\frac{H_{uu}}{H_{u}^{2}}+(1-a^{2}-b^{2})\frac{H_{uv}}{H_{u}H_{v}}+b^{2}\frac{H_{vv}}{H_{v}^{2}} \right),
\end{align*}
and the latter expression must be nonnegative. Since $M$ is nonnegative on the boundary of the halfplane (\ref{halfp}), this finishes the proof of the lemma 
\end{proof}

\subsection{The sufficiency for the Gaussian measure}\label{suf-1}
Our main ingredient will be  a subtle Theorem~3 from \cite{IV}.  Let us  precisely formulate it in  the way we will use it.  
Let $k, k_{1}, k_{2}$ and $k_{3}$ be some positive integers with $k\geq k_{j}$, $j=1,2,3$. Let $A_{j}$ be $k\times k_{j}$ size matrices of full rank for $j=1,2$ and $3$. Set $A=(A_{1},A_{2},A_{3})$ to be $k \times (k_{1}+k_{2}+k_{3})$ size. Let $B : \Omega \subset \mathbb{R}^{3} \to \mathbb{R}$ be in $C^{2}(\Omega)$ where $\Omega$ is a closed bounded  rectangular domain, i.e., $\Omega = I_{1} \times I_{2} \times I_{3}$ where $I_{1}, I_{2}, I_{3}$ are closed subintervals in $\mathbb{R}$.  Let $C$ be a positive definite $k \times k$ matrix. Set 
\begin{align*}
d\gamma_{C}(x) = \frac{1}{\sqrt{(2 \pi)^{k}\det(C)}} e^{-\frac{|C^{-1/2}x|^{2}}{2}}dx. 
\end{align*}

 By $A^{*}$ we denote the transpose of the matrix $A$. Let $x \in \mathbb{R}^{k}$ be a row vector, i.e., $x=(x_{1},\ldots, x_{k})$.  Let $A^{*}CA \bullet \mathrm{Hess}\, B$ denotes $(\sum k_{j}) \times (\sum k_{j})$ matrix  $\{A_{i}^{*}CA_{j} \partial_{ij}B\}_{i,j=1}^{3}$, i.e., $A^{*}CA\bullet \mathrm{Hess}\, B$ is constructed by the blocks  $A_{i}^{*}CA_{j}\partial_{ij}B$. 
\begin{theoremV}\label{ivo-1}
$A^{*}CA \bullet \mathrm{Hess}\; B \geq 0$ on $\Omega$ if and only if 
\begin{align}
&\int_{\mathbb{R}^{k}} B(u_{1}(xA_{1}), u_{2}(xA_{2}), u_{3}(xA_{3}) )d\gamma_{C}(x) \geq  \label{ivo-2}\\
&B\left( \int_{\mathbb{R}^{k_{1}}} u_{1}(y\sqrt{A_{1}^{*}CA_{1}}) d\gamma_{k_{1}}(y),  \int_{\mathbb{R}^{k_{2}}} u_{2}(y\sqrt{A_{2}^{*}CA_{2}}) d\gamma_{k_{2}}(y), \int_{\mathbb{R}^{k_{3}}} u_{3}(y\sqrt{A_{3}^{*}CA_{3}}) d\gamma_{k_{3}}(y)\right)\nonumber
\end{align}
for all Borel measurable $u_{j} :\mathbb{R}^{k_{j}} \to I_{j}$, $j=1,2,3$. 
\end{theoremV}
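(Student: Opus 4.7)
The plan is to handle the two directions separately: an Ornstein--Uhlenbeck semigroup interpolation drives the sufficiency, and a Taylor expansion against nearly linear test functions drives the necessity.

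For sufficiency, I would set up a heat-flow interpolation. Let $P_t^{M}$ denote the Ornstein--Uhlenbeck semigroup associated with the centered Gaussian measure of covariance $M$, normalized so that $P_0^M u = u$ and $P_t^M u \to \int u\, d\gamma_M$ as $t \to \infty$. A direct change of variables shows that whenever $\tilde u(x) = u(xA)$ with $A$ of size $k \times k_j$, then $P_t^C \tilde u(x) = (P_t^{A^{*}CA} u)(xA)$, so the marginal flows are compatible. Setting $v_j^t(x) = (P_t^{A_j^{*}CA_j} u_j)(xA_j)$ and
\[
\Phi(t) \;=\; \int_{\mathbb{R}^k} B\bigl(v_1^t(x), v_2^t(x), v_3^t(x)\bigr)\, d\gamma_C(x),
\]
one has $\Phi(0)$ equal to the left-hand side of (\ref{ivo-2}) and $\lim_{t\to\infty}\Phi(t)$ equal to the right-hand side. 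Differentiating and integrating by parts against $d\gamma_C$ via the generator $L_C f = \operatorname{tr}(C\,\nabla^2 f)-\langle x,\nabla f\rangle$ yields
\[
\Phi'(t) \;=\; -\int_{\mathbb{R}^k} \sum_{i,j} \partial_{ij}B(v^t)\,\bigl\langle C\,\nabla v_i^t,\,\nabla v_j^t\bigr\rangle\, d\gamma_C.
\]
Now $\nabla v_j^t(x) = A_j\,w_j^t(x)$, where $w_j^t = (\nabla P_t^{A_j^{*}CA_j} u_j)\circ A_j \in \mathbb{R}^{k_j}$, so $\langle C\nabla v_i^t,\nabla v_j^t\rangle = (w_i^t)^{*}(A_i^{*}CA_j)\,w_j^t$, and the integrand rewrites as the quadratic form $W^{*}\bigl(A^{*}CA\bullet\mathrm{Hess}\,B(v^t)\bigr)W$ with $W = (w_1^t,w_2^t,w_3^t)$. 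The hypothesis forces this pointwise nonnegativity, so $\Phi$ is nonincreasing, and $\Phi(0)\geq \Phi(\infty)$ is exactly (\ref{ivo-2}).

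For necessity, I would test (\ref{ivo-2}) on small perturbations of constants. Fix an interior point $c=(c_1,c_2,c_3)\in\Omega$ and vectors $\xi_j\in\mathbb{R}^{k_j}$, and set $u_j^\varepsilon(y) = c_j + \varepsilon\,\chi_R(y)\langle \xi_j,y\rangle$ for a smooth cutoff $\chi_R$ supported in $\{|y|\leq R\}$ and equal to one on $\{|y|\leq R/2\}$; with $R$ fixed and $\varepsilon$ small enough, the range of $u_j^\varepsilon$ lies in the appropriate coordinate slab of $\Omega$. Because the Gaussian mean of a truncated linear function is $o_{R\to\infty}(1)$, the first-order terms on both sides of (\ref{ivo-2}) match, and the second-order Taylor expansion of $B$ around $c$ reduces the inequality to
\[
\sum_{i,j}\partial_{ij}B(c)\int \chi_R(xA_i)\chi_R(xA_j)\langle \xi_i,xA_i\rangle\langle \xi_j,xA_j\rangle\,d\gamma_C(x)\;\geq\;0.
\]
Passing $R\to\infty$, using that $\langle \xi_j,\cdot\rangle$ has finite Gaussian moments, the integral converges to $\sum_{i,j}\partial_{ij}B(c)\,\xi_i^{*}A_i^{*}CA_j\, \xi_j = W^{*}\bigl(A^{*}CA\bullet\mathrm{Hess}\,B(c)\bigr)W$ with $W=(\xi_1,\xi_2,\xi_3)$. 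Varying $c$ and the $\xi_j$ yields the matrix inequality throughout the interior of $\Omega$, and it extends to all of $\Omega$ by continuity.

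The point I expect to be delicate is justifying the differentiation under the integral and the $t\to\infty$ limit in the sufficiency step for general Borel measurable $u_j$. Compactness and convexity of $\Omega$ (as a product of intervals) ensure each $v_j^t$ stays in the $j$-th coordinate slab along the flow, so $B(v^t)$ is uniformly bounded; a standard mollification of $u_j$ combined with Gaussian integration-by-parts estimates provides the $L^2(\gamma_C)$ control on $\nabla v^t$ needed to close the argument. The cutoff approximation in the necessity step is easier, essentially because $B$ and its Hessian are bounded on $\Omega$ while linear functions have rapidly decaying Gaussian tails.
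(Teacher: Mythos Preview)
Your proposal is correct and matches the approach of the paper's reference~\cite{IV}, from which Theorem~A is quoted without proof here. The paper explicitly notes (see Remark~\ref{opt-structure} and the paragraph following the statement of Theorem~A) that the argument in~\cite{IV} proceeds via an identity involving Ornstein--Uhlenbeck semigroups and the quadratic form $A^{*}CA\bullet\mathrm{Hess}\,B$, which is exactly your sufficiency argument; the paper's only addition is the approximation step extending from smooth bounded $u_j$ to Borel measurable ones, which you also address. Your necessity argument via Taylor expansion on truncated linear test functions is likewise standard and in the same spirit as the paper's own necessity computations in Section~\ref{nec-1}.
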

Here $d\gamma_{n}(y)=\frac{e^{-|x|^{2}/2}}{(2\pi)^{n/2}}dy$ is the standard Gaussian measure on $\mathbb{R}^{n}$. Sometimes we will omit dependence on dimension $n$ and we will write $d\gamma$, and the corresponding dimension of the Gaussian measure will be clear from the context. 

The theorem was formulated for smooth $B$ and compactly supported functions $u_{j}$, $j=1,2,3$. We should mention that the theorem still remains true for $B \in C^{2}(\Omega)$ and for smooth bounded $u_{j}$. The proof proceeds absolutely in the same way as in \cite{IV}. For the conveninence of the reader we decided to sketch the proof in Section~\ref{atrakebs} (see Appendix). 

 In order to obtain (\ref{ivo-2}) for all  Borel measurable functions  $u_{j} : \mathbb{R}^{k_{j}} \to I_{j}$  we approximate pointwise almost everywhere by smooth bounded functions $u^{n}_{j}$ such that $\mathrm{Im}(u^{n}_{j}) \in I_{j}$. Finally, the Lebesgue  dominated convergence theorem justifies the result (notice that all functions $u_{1}, u_{2}, u_{3}$ and $B$ are uniformly bounded).

We should also mention that inequality (\ref{ivo-2}) for the  function $B(x,y,z)=x^{p}y^{q}z^{r}$ recovers the reverse  Young's inequality for convolutions with sharp constants, and the latter  was used in \cite{brascamp-lieb1} in obtaining the Pr\'ekopa--Leindler inequality. In our case the situation is slightly different.  We will be using (\ref{ivo-2}) for some sequence of functions $B^{R}$, matrices $C^{R}$, $A^{R}$, test functions $(u^{R}_{1}, u^{R}_{2}, u^{R}_{3})$,  and very special sequence of  Gaussian measures $d\gamma^{R}$ where $R\geq 1$. Finally, in the limit $R \to \infty$ we will obtain (\ref{ragaca}). 

Further in obtaining the sufficiency condition, without loss of generality we can assume that  $d\mu = d\gamma$. The case of the arbitrary Gaussian measure (\ref{gauss-measure}) follows by testing (\ref{ragaca}) on the shifts and dilates of $f,g$ and change of variables in (\ref{ragaca}).  Next we consider two different cases, when $|1-a^{2}-b^{2}|=2ab$ and when $|1-a^{2}-b^{2}|<2ab$. To the first case we refer as {\em parabolic case} and the second case we call  {\em elliptic case}. These names originate from studying the solutions of the partial differential inequality in (\ref{numbers}), see Remark~\ref{hypokrat}.

\subsubsection{Parabolic case.} 
In this subsection we consider the case when 
\begin{align}\label{heat}
\left( \frac{1-a^{2}-b^{2}}{2ab}\right)^{2}=1.
\end{align}
(\ref{heat}) holds if and only if $a=|1-b|$ or $a=1+b$.  
 Without loss of generality we will assume that $H>\delta$ for some $\delta>0$. Moreover, we  can choose $\delta>0$ so that  $|H_{x}|, |H_{y}| >\delta >0$. 
  Set 
\begin{align}\label{measure}
d\gamma_{p,q,x}(y) = \left( \frac{p}{2\pi}\right)^{n/2} e^{-\frac{|py+qx|^{2}}{2p}}dy  \quad \text{for} \quad x, y \in \mathbb{R}^{n}. 
\end{align}
The choice of the numbers $p,q$ will be specified later. So far we  assume that $p>0$. Notice that $d\gamma_{p,q,x}(y)$ is a probability measure on $\mathbb{R}^{n}$. We need the following lemma. 
\begin{lemma}\label{lem-5}
For any $1>\alpha>\beta>0$ with $\alpha+\beta>1$ there exists $R_{0} = R_{0}(\alpha,\beta, H)$ such that for all $R>R_{0}$ we have 
\begin{align}
&\int_{\mathbb{R}^{n}}\left( \int_{\mathbb{R}^{n}} H^{R}\left(f\left(\frac{x-y}{a}\right), g\left(\frac{y}{b}\right) \right)d\gamma_{p,q,x}(y)\right)^{\frac{1}{R-R^{\alpha}}}d\gamma_{n}(x) \geq  \nonumber  \\
&H^{\frac{R}{R-R^{\alpha}}}\left(\int_{\mathbb{R}^{n}} f \left( x \sqrt{1+\frac{ 1 }{a^{2} R^{\beta}}}\right)d\gamma(x),\int_{\mathbb{R}^{n}}g\left(x \sqrt{1+\frac{ 
1  }{b^{2} R^{\beta}}} \right)d\gamma(x)\right),  \label{mq1}
\end{align}
where $p=R^{\beta}$, $q=-b R^{\beta}$ if $a=|1-b|$ and $q=bR^{\beta}$ if $a=b+1$. 
\end{lemma}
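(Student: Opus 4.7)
The plan is to combine Theorem~A (the $L^p$-type Brascamp--Lieb inequality recalled earlier) with a conditional Jensen / power-mean maneuver that converts the nested integral on the left of Lemma~\ref{lem-5} into a single Gaussian integral over $\mathbb{R}^{2n}$. First I would identify the product $d\gamma_n(x)\,d\gamma_{p,q,x}(y)$ as the joint law of a centered Gaussian $(X,Y)\in\mathbb{R}^{2n}$ with covariance
$$C=\begin{pmatrix} I & -(q/p)I \\ -(q/p)I & (1/p+q^{2}/p^{2})I\end{pmatrix},$$
for which $Y\mid X=x\sim\gamma_{p,q,x}$. Defining $A_{1},A_{2}\in\mathbb{R}^{2n\times n}$ so that $(X,Y)A_{1}=(X-Y)/a$ and $(X,Y)A_{2}=Y/b$, the parabolic values $p=R^{\beta}$, $q=\mp bR^{\beta}$ give $A_{1}^{*}CA_{1}=(1+1/(a^{2}R^{\beta}))I$ and $A_{2}^{*}CA_{2}=(1+1/(b^{2}R^{\beta}))I$, matching precisely the dilations on the right of Lemma~\ref{lem-5}, while the off-diagonal block $A_{1}^{*}CA_{2}=c_{12}I$ satisfies $c_{11}c_{22}-c_{12}^{2}=1/(a^{2}b^{2}R^{\beta})$ — small but strictly positive, reflecting the parabolic degeneracy as $R\to\infty$.

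With this data I would apply Theorem~A to $B(u,v)=H(u,v)^{R/(R-R^{\alpha})}$, which (granted the PDI hypothesis $A^{*}CA\bullet\mathrm{Hess}\,B\ge0$) gives
$$\int H^{R/(R-R^{\alpha})}\bigl(f((X-Y)/a),g(Y/b)\bigr)\,d\gamma_{C}\ \ge\ H^{R/(R-R^{\alpha})}\Bigl(\int f\,d\gamma_{A_{1}^{*}CA_{1}},\int g\,d\gamma_{A_{2}^{*}CA_{2}}\Bigr),$$
whose right-hand side is already that of Lemma~\ref{lem-5}. To reach the nested left-hand side I would then invoke the power-mean inequality with exponents $s:=R/(R-R^{\alpha})<R$: for each fixed $x$, applied to the probability measure $d\gamma_{p,q,x}(y)$,
$$\int H^{s}\,d\gamma_{p,q,x}(y)\ \le\ \Bigl(\int H^{R}\,d\gamma_{p,q,x}(y)\Bigr)^{s/R}\ =\ \Bigl(\int H^{R}\,d\gamma_{p,q,x}(y)\Bigr)^{1/(R-R^{\alpha})}.$$
Integrating against $d\gamma_{n}(x)$ and using Fubini identifies $\int H^{s}\,d\gamma_{C}$ as a lower bound for the left-hand side of Lemma~\ref{lem-5}; concatenated with the Theorem~A estimate, this closes the argument.

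The main obstacle is the verification of the matrix inequality $A^{*}CA\bullet\mathrm{Hess}(H^{s})\ge0$. Since each block $A_{i}^{*}CA_{j}$ is a scalar multiple of $I_{n}$, this reduces to positivity of a single $2\times 2$ matrix which factors as $D_{H}\bigl(\rho\,C+C\odot\tilde H\bigr)D_{H}$, where $D_{H}=\mathrm{diag}(H_{u},H_{v})$, $\rho=(s-1)/H>0$, and $\tilde H_{ij}=H_{ij}/(H_{i}H_{j})$. The scalar PDI of Theorem~\ref{mth1} controls the quadratic form of $\tilde H$ only in the direction $(a,\pm b)$, whereas the matrix condition also requires control along the "soft" eigendirection of $C$ (whose eigenvalue is of order $R^{-\beta}$). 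The hypothesis $\alpha+\beta>1$, combined with the explicit $O(R^{-\beta})$ expansion of the entries $c_{ij}$ around their parabolic limits $\pm 1$, is exactly what lets one balance $\rho\,\lambda_{\min}(C)$ against the potentially negative spectrum of $\tilde H$ along the soft direction; I expect this perturbative matrix analysis, together with the threshold $R_{0}(\alpha,H)$ chosen so that the dominant positive contribution exceeds the correction terms uniformly on the compact rectangle $\Omega$, to be the technical heart of the proof.
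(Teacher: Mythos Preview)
Your reduction to the two-variable function $B=H^{s}$ with $s=R/(R-R^{\alpha})$ has a genuine gap: the matrix condition $c\bullet\mathrm{Hess}(H^{s})\ge 0$ (with $c_{ij}=\langle \tilde C a_i,a_j\rangle$) fails for large $R$ whenever $H$ is not convex. The $(1,1)$ entry of this $2\times 2$ matrix is, up to a positive factor, $(s-1)H_{x}^{2}+HH_{xx}$; since $s-1\sim R^{\alpha-1}\to 0$, this is negative at any point where $H_{xx}<0$ (e.g.\ the Pr\'ekopa--Leindler function $H=x^{a}y^{b}$). More explicitly, in your own factorization the quadratic form of $\rho\,c+c\odot\tilde H$ along $(1,-1)$ equals
\[
\rho\,\varepsilon\Bigl(\tfrac{1}{a}+\tfrac{1}{b}\Bigr)^{2}\;+\;\bigl(\tilde H_{11}-2\tilde H_{12}+\tilde H_{22}\bigr)\;+\;O(\varepsilon),
\]
where the first term is of order $R^{\alpha-1-\beta}\to 0$ while the middle term is an $O(1)$ quantity with no sign control from the scalar PDI (which constrains only the direction $(a,b)$, not $(1,-1)$). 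The balance you hope for, $\rho\,\lambda_{\min}(c)\gtrsim O(1)$, would force $\alpha-1-\beta\ge 0$, contradicting $\alpha<1$; the hypothesis $\alpha+\beta>1$ does not enter this comparison at all.

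The paper avoids this by introducing a \emph{third} variable: it applies Theorem~A to $B(x,y,z)=H^{R}(x,y)\,z^{1-a(R)}$ with $a(R)=R-R^{\alpha}$. Now the exponent on $H$ is $R$ itself, so in the resulting $3\times 3$ matrix the Hessian block carries $N=R-1$ in place of your $s-1$, and the diagonal entries $(H_{xx}H+NH_{x}^{2})$ etc.\ become large and positive. The auxiliary variable $z$, carrying the negative exponent $1-a(R)$, is eliminated only \emph{after} Theorem~A via the reverse H\"older inequality $\int F^{a(R)}G^{1-a(R)}d\mu\ge(\int F)^{a(R)}(\int G)^{1-a(R)}$, optimizing over $\varphi=G$ with $\int\varphi\,d\gamma_n=1$; this optimization is what collapses the three-variable inequality into the nested form $\bigl(\int H^{R}\,d\gamma_{p,q,x}\bigr)^{1/a(R)}$. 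Your power-mean step tries to perform this collapse \emph{before} invoking Theorem~A, and that is precisely what forces the unworkable small exponent on $H$. The hypotheses $\alpha>\beta$ and $\alpha+\beta>1$ are used in the paper to control the sign of the $3\times 3$ determinant, where the relevant competing scales are $N\varepsilon\sim R^{1-\beta}$, $(N-M)N\varepsilon\sim R^{\alpha-\beta}$, and $\varepsilon/(N-M)\sim R^{1-\alpha-\beta}$.
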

Before we proceed to the proof of the lemma let us explain that the lemma implies the desired result (\ref{ragaca}).  It is clear that if $R \to \infty$ then the right hand side of (\ref{mq1}) tends to $H(\int  f d\gamma, \int g\gamma)$.  We claim that the left hand side of (\ref{mq1}) tends to $\int_{\mathbb{R}^{n}} \esssup_{y} H (f((x-y)/a), g(y/a)) d\gamma(x)$. Indeed, let 
\begin{align*}
\varphi_{R}(x) = \left( \int_{\mathbb{R}^{n}} H^{R}\left(f\left(\frac{x-y}{a}\right), g\left(\frac{y}{b}\right) \right)d\gamma_{p,q,x}(y)\right)^{\frac{1}{R-R^{\alpha}}}. 
\end{align*}
We claim that $\varphi_{R}(x) \to \esssup_{y} H (f((x-y)/a), g(y/a))$ a.e. as $R \to \infty$. Notice that 
\begin{align*}
\varphi_{R}(x) \leq  \left( \esssup_{y} \; H(f((x-y)/a), g(y/a)) \right)^{{\frac{R}{R-R^{\alpha}}} } \xrightarrow[R \to \infty]{} \esssup_{y} H (f((x-y)/a), g(y/a)).
\end{align*}
On the other hand let $\varepsilon>0$. Consider 
$$
A_{\varepsilon} = \{ y : H(f((x-y)/a), g(y/a)) >  \esssup_{y} H (f((x-y)/a), g(y/a)) - \varepsilon\}.
$$
 Let $N$ be a sufficiently large number such that $|A_{\varepsilon} \cap B(0,N)| > 0$. Here $B(0,N)$ denotes the ball centered at the origin with radius $N$.  Then 
\begin{align*}
&\varphi_{R}(x) \geq \left( \gamma_{p,q,x}(A_{\varepsilon} \cap B(0,N))\right)^{\frac{1}{R-R^{\alpha}}} (\esssup_{y} H (f((x-y)/a), g(y/a)) - \varepsilon)^{\frac{R}{R-R^{\alpha}}} \\
&\xrightarrow[R \to \infty]{} \esssup_{y} H (f((x-y)/a), g(y/a)) - \varepsilon.
\end{align*}
The last passage follows from the fact that the power in the exponent (\ref{measure}) is of order $R^{\beta}$ where $\beta<1$.  Since $\varepsilon$ is arbitrary we obtain the pointwise convergence for $\varphi_{R}(x)$. 
 
Finally, since $\varphi_{R}(x)$ are uniformly  bounded the Lebesgue dominated convergence theorem implies  
\begin{align*}
\int_{\mathbb{R}^{n}} \varphi_{R}(x) d\gamma(x) \xrightarrow[R \to \infty]{}  \int_{\mathbb{R}^{n}} \esssup_{y} \;H\left( f\left( \frac{x-y}{a}\right), g\left(\frac{y}{b}\right) \right)d\gamma(x).
\end{align*}
It remains to prove the lemma. 
\begin{proof}
Take an arbitrary Borel measurable  $\varphi$ such that $\delta' > \varphi > 1/\delta' >0$ for some $\delta'>0$.  Let $a(R) = R-R^{\alpha}$. First,  we  show that 

 \begin{align}\label{pirveli}
 &\int_{\mathbb{R}^{n}} \int_{\mathbb{R}^{n}}H^{R}\left(f\left(\frac{x-y}{a}\right), g\left(\frac{y}{b}\right) \right) \varphi^{1-a(R)}(x) d\gamma_{p,q,x}(y) d\gamma(x)   \geq  \nonumber \\
 &H^{R}\left( \int_{\mathbb{R}^{n}} f\left( x\sqrt{1+ \frac{1}{a^{2}R^{\beta}}  }\right) d\gamma, \int_{\mathbb{R}^{n}} g\left( x\sqrt{1+\frac{1}{b^{2} R^{\beta}} }\right) d\gamma \right) \cdot \left(\int_{\mathbb{R}^{n}} \varphi d\gamma  \right)^{1-a(R)}.
 \end{align}
We should apply Theorem~A. In order to do that notice that 
 \begin{align}\label{medef}
 d\gamma_{p,q,x}(y) d\gamma_{n}(x)=  \frac{e^{-\langle C^{-1} \vec{x}, \vec{x} \rangle/2}}{\sqrt{(2\pi)^{2n}}} \det (C^{-1/2})dxdy := \gamma_{C}(\vec{x})dxdy,
 \end{align}
 where $\vec{x}=(x,y)$ and 
 \begin{align}\label{matrixsi}
 C^{-1} =\begin{pmatrix}
 1+\frac{q^{2}}{p} & q \\
 q & p
 \end{pmatrix} \otimes I_{n \times n} = \tilde{C}^{-1} \otimes I_{n \times n},
 \end{align}
 where $I_{n \times n}$ is $n \times n$ identity matrix. Clearly $C>0$. Set 
 \begin{align*}
 A_{1} := \begin{pmatrix}
 \frac{1}{a}\\
 -\frac{1}{a}
 \end{pmatrix} \otimes I_{n\times n} := a_{1} \otimes I_{n\times n}, \quad
   A_{2} := \begin{pmatrix}
 0\\
 \frac{1}{b}
 \end{pmatrix} \otimes I_{n\times n} := a_{2} \otimes I_{n\times n}, \quad 
   A_{3} := \begin{pmatrix}
 1\\
 0
 \end{pmatrix} \otimes I_{n\times n} := a_{3} \otimes I_{n\times n}.
 \end{align*}
 Let $A:=(A_{1}, A_{2}. A_{3})$ be $2n \times 3n$ matrix. Notice that $A = \tilde{A} \otimes I_{n\times n}$ where $\tilde{A} = (a_{1},a_{2},a_{3})$ is $2 \times 3$ matrix.  
 
  Next, we notice that in this case
  \begin{align*}
  C = \begin{pmatrix}
  1 & -\frac{q}{p}\\
  -\frac{q}{p} & \frac{1}{p}+\frac{q^{2}}{p^{2}}
  \end{pmatrix}\otimes I_{n\times n} = \tilde{C} \otimes I_{n\times n}.
  \end{align*}
Therefore 
\begin{align*}
& A_{3}^{*}CA_{3} = \langle \tilde{C}a_{3},a_{3} \rangle \otimes I_{n \times n}  =I_{n\times n};\\
&A_{2}^{*} CA_{2} = \langle \tilde{C}a_{2}, a_{2} \rangle \otimes I_{n \times n}  = \frac{1}{b^{2}}\left[ \frac{1}{p}+\frac{q^{2}}{p^{2}}\right] \otimes I_{n\times n}; \\
&A_{1}^{*}CA_{1} = \langle \tilde{C}a_{1}, a_{1} \rangle =  \frac{1}{a^{2}}\left[1+\frac{1}{p}+2\cdot \frac{q}{p}+\frac{q^{2}}{p^{2}} \right] \otimes I_{n\times n}.
\end{align*}
 
 By Theorem~A the desired inequality (\ref{pirveli}) holds, namely, 
 \begin{align}
 &\int_{\mathbb{R}^{2n}} H^{R}(f(\vec{x}\,A_{1}), g(\vec{x}\, A_{2})) \varphi^{1-a(R)}(\vec{x}\, A_{3}) d\gamma_{C}(\vec{x}) \geq  \nonumber\\
 &H^{R}\left( \int_{\mathbb{R}^{n}} f(x\, \sqrt{A_{1}^{*}CA_{1}})d\gamma, \int_{\mathbb{R}^{n}} g (x\,\sqrt{A_{2}^{*}CA_{2}})d\gamma\right) \cdot \left(\int_{\mathbb{R}^{n}} \varphi(x\, \sqrt{A_{3}^{*}CA_{3}}) d\gamma  \right)^{1-a(R)} \label{chveni}
 \end{align}
  if and only if $A^{*}CA \bullet \mathrm{Hess}\, B = \{ A_{i}^{*}CA_{j} \partial_{ij} B \}_{i,j=1}^{n} \geq 0$ where  $B(x,y,z)=H^{R}(x,y)z^{1-a(R)}$ is given on $I\times J \times [\delta',1/\delta']$.
  

Denote  $\varepsilon := R^{-\beta}$, and lets think of it   as a sufficiently small number.
 We  remind that $p=\varepsilon^{-1}$, and $q=-b\varepsilon^{-1}$ if $a=|1-b|$ and $q=b\varepsilon^{-1}$ if $a=1+b$.   Then $\langle \tilde{C}a_{2}, a_{2}\rangle =1+ \frac{\varepsilon}{b^{2}}$ and $\langle \tilde{C} a_{1}, a_{1} \rangle =1+\frac{\varepsilon}{a^{2}}$. 

First we consider the case when $a+b=1$. The remaining cases are similar. 
We obtain 
\begin{align}\label{matrixA}
C = \begin{pmatrix}
1 & b \\
b & b^{2}+\varepsilon
\end{pmatrix} \otimes I_{n \times n} \quad \text{and,} \quad 
A^{*}CA = \begin{pmatrix}
1+\frac{\varepsilon}{a^{2}} & 1-\frac{\varepsilon}{ab} & 1 \\
1-\frac{\varepsilon}{ab}  & 1+\frac{\varepsilon}{b^{2}} & 1\\
1 & 1 & 1
\end{pmatrix} \otimes I_{n \times n} = \tilde{A}^{*}\tilde{C}\tilde{A} \otimes I_{n \times n}. 
\end{align}
For $B(x,y,z)=H^{R}(x,y)z^{1-a(R)}$ we have 
\begin{align*}
\mathrm{Hess}\; B =R H^{R-2} z^{1-a(R)} \times 
\begin{pmatrix}
H_{xx}H +(R-1)H_{x}^{2} &H_{xy}H+(R-1)H_{x}H_{y} &(1-a(R))H_{x}H z^{-1}\\
H_{xy}H+(R-1)H_{x}H_{y} &H_{yy}H+(R-1)H_{y}^{2} & (1-a(R))H_{y}Hz^{-1}\\
(1-a(R))H_{x}Hz^{-1} &  (1-a(R))H_{y}Hz^{-1} &\frac{a(R)(a(R)-1)}{R}H^{2}z^{-2}
\end{pmatrix}.
\end{align*}
 We have $\mathrm{Hess}\; B = R H^{R-2} z^{1-a(R)} \cdot S TS^{*}$ where $S$ is a diagonal matrix with entries $1$, $1$ and $(1-a(R))H z^{-1}$ on the diagonal, and 
\begin{align*}
T = 
\begin{pmatrix}
H_{xx}H +(R-1)H_{x}^{2} &H_{xy}H+(R-1)H_{x}H_{y} &H_{x}\\
H_{xy}H+(R-1)H_{x}H_{y} &H_{yy}H +(R-1)H_{y}^{2} & H_{y}\\
H_{x} &  H_{y} &\frac{a(R)}{R(a(R)-1)}
\end{pmatrix}. 
\end{align*}
Thus $A^{*}CA \bullet \mathrm{Hess}\; B \geq 0$ if and only if $\tilde{A}^{*}\tilde{C}\tilde{A} \bullet T \geq 0$. If we set $R-1=N$  and $\frac{a(R)}{R(a(R)-1)}=\frac{1}{M}$ then we have 
\begin{align*}
\tilde{A}^{*}\tilde{C}\tilde{A} \bullet T = 
\begin{pmatrix}
(H_{xx}H +N\cdot H_{x}^{2}) \left(1+\frac{\varepsilon}{a^{2}} \right) &(H_{xy}H+N\cdot H_{x}H_{y})\left(1-\frac{\varepsilon}{ab} \right) &H_{x}\\
(H_{xy}H+N \cdot H_{x}H_{y})\left(1-\frac{\varepsilon}{ab} \right)  &(H_{yy}H +N\cdot H_{y}^{2}) \left(1+\frac{\varepsilon}{b^{2}} \right) & H_{y}\\
H_{x} &  H_{y} &\frac{1}{M}
\end{pmatrix}.
\end{align*}
By choosing $N$ to be sufficiently large 
we will make the diagonal entries positive. Notice that such choice is possible because $H \in C^{2}(\Omega)$ and $H_{x}, H_{y} \neq 0$.  Let us investigate the sign of $2\times 2$ leading minor. We have 
\begin{align}\label{first-minor}
&\det 
\begin{pmatrix}
(H_{xx}H +N\cdot H_{x}^{2}) \left(1+\frac{\varepsilon}{a^{2}} \right) &(H_{xy}H+N\cdot H_{x}H_{y})\left(1-\frac{\varepsilon}{ab} \right)\\
(H_{xy}H+N \cdot H_{x}H_{y})\left(1-\frac{\varepsilon}{ab} \right)  &(H_{yy}H +N\cdot H_{y}^{2}) \left(1+\frac{\varepsilon}{b^{2}} \right)
\end{pmatrix}=
\varepsilon N^{2} H_{x}^{2}H_{y}^{2}\left(\frac{a+b}{ab}\right)^{2}+O(N),
\end{align}
where $O(N) \leq N \cdot C_{1}$ where $C_{1}=C_{1}(H)$ is some absolute constant.  Thus we  see that choosing $N \varepsilon  > C_{2}$ for some arbitrary large absolute $C_{2}=C_{2}(H)$  (we remind that $\varepsilon = R^{-\beta}$ and  $N \sim R$)  the determinant of the minor will be positive. For the next $2 \times 2$ minor we have 
\begin{align}\label{second-minor}
&\det 
\begin{pmatrix}
(H_{yy}H +N\cdot H_{y}^{2}) \left(1+\frac{\varepsilon}{b^{2}} \right) & H_{y}\\
H_{y} &\frac{1}{M}
\end{pmatrix}=H_{y}^{2}\left(\frac{N}{M}-1 +\frac{N\varepsilon}{M b^{2}} \right) + \frac{H_{yy} H}{M}.
\end{align}
Notice that $N-M=\frac{R}{a(R)}-1$ and since $a(R)<R$ we have $H_{y}^{2}\left(\frac{N}{M}-1 +\frac{N\varepsilon}{M b^{2}} \right) + \frac{H_{yy} H}{M} \geq \frac{1}{M}(H_{y}^{2}N\varepsilon+H_{yy}H)$ and the last expression is nonnegative if $\varepsilon N > C_{3}$ for some large absolute $C_{3} = C_{3}(H)$. In the similar way we obtain that all $2 \times 2$ minors are nonnegative provided that $N \varepsilon$ is sufficiently large.

So it remains to  check the sign of $\det(\tilde{A}^{*}\tilde{C}\tilde{A} \bullet T)$.  We have 
\begin{align}
&a^{2} b^{2} M \det(\tilde{A}^{*}\tilde{C}\tilde{A} \bullet T) = N \varepsilon H (a^{2} H_{xx} H_{y}^{2}+b^{2}H_{yy}H_{x}^{2}+(1-a^{2}-b^{2})H_{xy}H_{x}H_{y} )+\label{third-minor} \\
&(N-M)\left[\frac{N \varepsilon}{2} H_{x}^{2} H_{y}^{2}(a+b)^{2}+H(a^{2}b^{2}+ \frac{\varepsilon^{2} N}{N-M})(H_{xx}H_{y}^{2}+H_{yy}H_{x}^{2}-2H_{xy}H_{x}H_{y}) \right.+ \nonumber \\
&\left.\varepsilon H (b^{2}H_{xx}H_{y}^{2}+a^{2}H_{yy}H_{x}^{2}+2H_{xy}H_{x}H_{y}ab)\right]+(N-M)\frac{N \varepsilon}{2} H_{x}^{2} H_{y}^{2}(a+b)^{2}+\nonumber \\
&H^{2}(a^{2}b^{2}+\varepsilon^{2})(H_{xx}H_{yy}-H_{xy}^{2})+\varepsilon H^{2}(H_{xx}H_{yy}(a^{2}+b^{2})+2ab H_{xy}^{2}). \nonumber
\end{align}
Notice that the first term, i.e., $N \varepsilon H (a^{2} H_{xx} H_{y}^{2}+b^{2}H_{yy}H_{x}^{2}+(1-a^{2}-b^{2})H_{xy}H_{x}H_{y})$ is nonnegative by the condition of the theorem. For the rest of the terms we notice  that if we set $a(R)=R-R^{\alpha}$ and $\varepsilon=R^{-\beta}$  for any $1>\alpha>\beta>0$, $\alpha+\beta>1$, then we obtain that for sufficiently large $R$ we have $N-M \approx R^{\alpha-1}$, 
\begin{align*}
N\varepsilon \approx R^{1-\beta} \to \infty, \quad \frac{\varepsilon}{N-M}\approx R^{1-\alpha-\beta} \to 0 \quad \text{and} \quad (N-M)N \varepsilon \approx R^{\alpha-\beta} \to \infty.
\end{align*}
Therefore the second term $\frac{N\varepsilon}{2}H_{x}^{2}H_{y}^{2}$ will dominate the rest of the terms, and in the last terms we notice that $(N-M)\frac{N \varepsilon }{2} H_{x}^{2} H_{y}^{2}$ will dominate all the bounded terms. 

For the remaining cases when $a=b+1$ and $a=b-1$ we have $\tilde{A}^{*}\tilde{C}\tilde{A}=$
\begin{align*}
  \begin{pmatrix}
1+\frac{\varepsilon}{a^{2}} & -1-\frac{\varepsilon}{ab} & -1 \\
-1-\frac{\varepsilon}{ab}  & 1+\frac{\varepsilon}{b^{2}} & 1\\
-1 & 1 & 1
\end{pmatrix} \quad \text{if}\quad  a=b-1; \quad 
\begin{pmatrix}
1+\frac{\varepsilon}{a^{2}} & -1-\frac{\varepsilon}{ab} & 1 \\
-1-\frac{\varepsilon}{ab}  & 1+\frac{\varepsilon}{b^{2}} & -1\\
1 & -1 & 1
\end{pmatrix} \quad \text{if} \quad a=b+1.
\end{align*}
In both of the cases there is a diagonal matrix $D$ having entries $\pm 1$ on the diagonal such that 
\begin{align}\label{diagonal}
D\tilde{A}^{*}\tilde{C}\tilde{A}D = \begin{pmatrix}
1+\frac{\varepsilon}{a^{2}} & 1+\frac{\varepsilon}{ab} & 1 \\
1+\frac{\varepsilon}{ab}  & 1+\frac{\varepsilon}{b^{2}} & 1\\
1 & 1 & 1
\end{pmatrix}.
\end{align} 
Notice that (\ref{diagonal}) is the same as $\tilde{A}^{*}\tilde{C}\tilde{A}$ in (\ref{matrixA}) except $b$ has switched the sign. Formulas (\ref{first-minor}), (\ref{second-minor}) and (\ref{third-minor}) are still valid if we switch the sign of $b$ to $-b$. The rest of the discussions proceed without any changes.

Finally in order to obtain (\ref{mq1}) we take infimum of the left hand side of (\ref{pirveli}) over all positive $\varphi$ such that $\int \varphi d\gamma_{n}=1$. Indeed, for the convenience of the reader let us mention  the following classical result.
\begin{lemma}\label{classical}
Let $s>1$  and $t<0$ be such that $s + t=1$ then for any positive bounded  $F,G$ we have 
\begin{align}\label{jensen}
\int F^{s} G^{t} d\mu \geq \left( \int F d\mu \right)^{s} \left( \int G d\mu \right)^{t}.
\end{align}
The equality holds if $F=\lambda G$ for a constant $\lambda >0$. 
\end{lemma}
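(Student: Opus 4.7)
The stated inequality is a reverse Hölder inequality, and the natural approach is to derive it from the ordinary Hölder inequality by a well-chosen factorization of $F$. Since $s+t=1$ with $s>1$ and $t<0$, we have $s-1=-t>0$, and the pair $(p,q)=(s,\, s/(s-1))$ is a legitimate pair of Hölder conjugates satisfying $\tfrac{1}{p}+\tfrac{1}{q}=1$.

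The key idea is the trivial factorization
\begin{align*}
F \,=\, (F^{s} G^{t})^{1/s}\cdot G^{-t/s},
\end{align*}
which is valid whenever $F,G>0$. Applying the ordinary Hölder inequality with exponents $p=s$ and $q=s/(s-1)$ to this factorization gives
\begin{align*}
\int F\, d\mu \,\leq\, \left(\int F^{s} G^{t}\, d\mu\right)^{\!1/s} \left(\int G^{-t/(s-1)}\, d\mu\right)^{\!(s-1)/s}.
\end{align*}
The crucial arithmetic simplification is that $-t/(s-1)=-t/(-t)=1$, so the second factor on the right collapses to $(\int G\, d\mu)^{(s-1)/s}$. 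This is exactly why the conjugate pair $(s,s/(s-1))$ is the ``right'' choice: it is engineered to turn $G^{-t/s}$ back into $G$ after integration.

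Raising both sides to the $s$-th power and using $1-s=t$ to rewrite the exponent of $\int G\, d\mu$ yields the desired
\begin{align*}
\left(\int F\, d\mu\right)^{\!s} \left(\int G\, d\mu\right)^{\!t} \,\leq\, \int F^{s} G^{t}\, d\mu.
\end{align*}
Boundedness of $F,G$ together with the strict positivity of $G$ guarantees that every integral above is finite and every exponentiation is well-defined, so there is no integrability obstruction. The equality case is immediate: if $F=\lambda G$ then $F^{s}G^{t}=\lambda^{s}G^{s+t}=\lambda^{s}G$, and the left-hand side equals $\lambda^{s}(\int G\,d\mu)^{s+t}=\lambda^{s}\int G\,d\mu$ as well. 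There is no real obstacle in this proof; the only non-routine step is spotting the factorization that makes Hölder's inequality produce exactly the desired three integrals, and once written down the rest is purely algebraic.
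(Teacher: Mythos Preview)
Your proof is correct, but it follows a different route from the paper's. The paper simply observes that $B(x,y)=x^{s}y^{t}$ is a $1$-homogeneous convex function on $(0,\infty)^{2}$ (one checks that the Hessian is positive semidefinite, using $s+t=1$ to see that its determinant vanishes while the diagonal entries $s(s-1)x^{s-2}y^{t}$ and $t(t-1)x^{s}y^{t-2}$ are nonnegative), and then the inequality is nothing but the two-variable Jensen inequality $\int B(F,G)\,d\mu\ge B(\int F,\int G)$ for a probability measure. Your argument instead reduces the reverse H\"older inequality to the ordinary one via the factorization $F=(F^{s}G^{t})^{1/s}G^{-t/s}$ with conjugate exponents $(s,s/(s-1))$. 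Both arguments are elementary and standard; the paper's approach is more in keeping with its running theme (convex $H$ yielding \eqref{ragaca}-type inequalities by Jensen, cf.\ Proposition~\ref{jensen2}), while yours avoids any Hessian computation and is the textbook ``reverse H\"older from H\"older'' trick. Either way the equality case $F=\lambda G$ is immediate.
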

\begin{proof}
Indeed, notice that $B(x,y)=x^{s}y^{t}$ is  a 1-homogeneous convex function for $x,y >0$. Therefore (\ref{jensen}) follows from the Jensen's inequality. 
\end{proof}

In case of (\ref{pirveli}) we take $t = 1-a(R)$ and $s=a(R)$. Taking infimum over all positive and bounded $\varphi$ with $\int \varphi d\gamma_{n}=1$ and finally rising the  obtained inequality to the power $1/a(R)$ we obtain (\ref{mq1}). In fact the infimum is attained on the following function
\begin{align}\label{optimizer-1}
\varphi(x)= m\cdot \left(\int_{\mathbb{R}^{n}} H^{R}\left(f \left(\frac{x-y}{a}\right),g\left(\frac{y}{b}\right) \right) d\gamma_{p,q,x}(y) \right)^{1/a(R)}
\end{align}
where the constant $m$ is chosen so that $\int \varphi  d\gamma_{n}=1$. Clearly such optimizer satisfies $\delta'< \varphi \leq 1/\delta'$ for some nonzero $\delta'>0$ because $H$ is bounded and $H > \delta$.   This finishes the proof in the parabolic case. 
\end{proof}


\subsubsection{Elliptic case}
In this subsection we consider the following case 
\begin{align*}
\left(\frac{1-a^{2}-b^{2}}{2ab} \right)^{2} <1.
\end{align*} 

\begin{lemma}\label{lem-7}
There exist positive constants $c=c(H)>0$ and $R_{0}=R_{0}(H)$ such that for any $R>R_{0}$ we have 
\begin{align*}
\int_{\mathbb{R}^{n}}\left( \int_{\mathbb{R}^{n}} H^{R}\left(f\left(\frac{x-y}{a}\right), g\left(\frac{y}{b}\right) \right)d\gamma_{p,q,x}(y)\right)^{\frac{1}{R-c}}d\gamma_{n}(x) \geq  \nonumber  H^{\frac{R}{R-c}}\left(\int_{\mathbb{R}^{n}} f \left( x\right)d\gamma(x),\int_{\mathbb{R}^{n}}g\left(x  \right)d\gamma(x)\right), 
\end{align*}
where $d\gamma_{p,q,x}(y)$ is defined as in (\ref{measure}) and  
\begin{align}\label{choiceofp}
p=\frac{4}{(1-(a-b)^{2})((a+b)^{2}-1)} \quad \text{and} \quad q =  \frac{2(a^{2}-b^{2}-1)}{(1-(a-b)^{2})((a+b)^{2}-1)}.
\end{align}
\end{lemma}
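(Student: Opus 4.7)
The plan is to mirror the proof of Lemma~\ref{lem-5}, with the parabolic-case limit $\varepsilon\to 0$ replaced by the strict ellipticity $1-\sigma^2>0$, where $\sigma := (a^2+b^2-1)/(2ab)$. First I verify the setup: with $p,q$ as in (\ref{choiceofp}), the $2\times 2$ covariance $\tilde C$ from (\ref{matrixsi}) and the vectors $a_1 = (1/a,-1/a)^T$, $a_2 = (0,1/b)^T$ satisfy $\langle\tilde C a_1,a_1\rangle = \langle\tilde C a_2,a_2\rangle = 1$. Equivalently, the pushforwards of $d\gamma_C = d\gamma_{p,q,x}(y)\,d\gamma_n(x)$ along $\vec{x}\mapsto(x-y)/a$ and $\vec{x}\mapsto y/b$ are standard Gaussians, which matches the right-hand side of the target inequality; also $\langle\tilde C a_1,a_2\rangle = \sigma$ with $|\sigma|<1$, so $\tilde C\succ 0$. (Direct check: $q/p = (a^2-b^2-1)/2$ and $1/p+q^2/p^2 = b^2$ together are equivalent to (\ref{choiceofp}), and the identity $1-\sigma^2 = 1/(a^2b^2p)$ makes the strict positivity transparent.)

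Introduce a third vector $a_3 = \mu_1 a_1+\mu_2 a_2$ with $\mu_1^2+2\sigma\mu_1\mu_2+\mu_2^2 = 1$, which guarantees $\langle\tilde C a_3,a_3\rangle = 1$ and hence that the $\varphi$-marginal is again $d\gamma_n$. Apply Theorem~A to $B(x,y,z) = H^R(x,y)\,z^{1-a(R)}$ on $\Omega\times(0,\infty)$, with $a(R) = R-c$ for a constant $c = c(H)>0$ to be fixed. This produces, for every bounded positive $\varphi$,
\begin{align*}
\int_{\mathbb{R}^{2n}} H^R\!\left(f\!\left(\tfrac{x-y}{a}\right),g\!\left(\tfrac{y}{b}\right)\right)\varphi^{1-a(R)}(\vec{x}A_3)\,d\gamma_C(\vec{x}) \geq H^R\!\left(\int f\,d\gamma_n,\int g\,d\gamma_n\right)\!\left(\int \varphi\,d\gamma_n\right)^{1-a(R)},
\end{align*}
provided $A^*CA\bullet\mathrm{Hess}\,B\geq 0$ on $\Omega\times(0,\infty)$. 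Factoring exactly as in Lemma~\ref{lem-5}, this PSD condition reduces to $\tilde A^*\tilde C\tilde A\bullet T\geq 0$ for the same $T$, with $\tilde A^*\tilde C\tilde A = \left(\begin{smallmatrix}1 & \sigma & \alpha \\ \sigma & 1 & \beta \\ \alpha & \beta & 1\end{smallmatrix}\right)$, $\alpha := \mu_1+\sigma\mu_2$, $\beta := \sigma\mu_1+\mu_2$.

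The top-left $2\times 2$ minor of $\tilde A^*\tilde C\tilde A\bullet T$ has determinant $(R-1)^2 H_x^2 H_y^2 (1-\sigma^2) + O(R)$, positive for large $R$ by $|\sigma|<1$. Since $a_1,a_2,a_3 \in \mathbb{R}^2$ are linearly dependent, $\det(\tilde A^*\tilde C\tilde A) = 1-\sigma^2-(\alpha^2+\beta^2-2\sigma\alpha\beta) = 0$ automatically on our ellipse, cancelling the nominal $R^2$-leading term of $\det(\tilde A^*\tilde C\tilde A\bullet T)$. A direct expansion (mirroring (\ref{third-minor})) then yields
\begin{align*}
\det(\tilde A^*\tilde C\tilde A\bullet T) = (1-\sigma^2)\,H\!\left[a^2 H_{xx} H_y^2 + (1-a^2-b^2) H_{xy} H_x H_y + b^2 H_{yy} H_x^2\right] + \frac{c(1-\sigma^2)H_x^2 H_y^2}{R} + O\!\left(\tfrac{1}{R}\right),
\end{align*}
once $(\alpha,\beta)$ is pinned down by $1-\beta^2 = \lambda a/b$, $1-\alpha^2 = \lambda b/a$, $s-\alpha\beta = \lambda$ together with the ellipse constraint; these force $\lambda = ab(1-\sigma^2)>0$, and real $(\alpha,\beta)$ exist because $\alpha^2 = 1-b^2(1-\sigma^2) \geq 0$ reduces to $(a^2-b^2+1)^2 \geq 0$, similarly for $\beta^2$. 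The PDI (\ref{numbers}) makes the first bracket $\geq 0$, the $c/R$ term is manifestly $\geq 0$, and for $c = c(H)$ sufficiently large the bounded $O(1/R)$ remainder is dominated (uniformly on the compact $\Omega$ where $H_x,H_y\neq 0$). Other minors are analogous.

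With $\tilde A^*\tilde C\tilde A\bullet T\geq 0$ in hand, the closing steps are identical to those of Lemma~\ref{lem-5}: Lemma~\ref{classical} with exponents $a(R)$ and $1-a(R)$, together with infimum over $\varphi$ satisfying $\int\varphi\,d\gamma_n = 1$, converts the weighted inequality above into the stated $L^{1/(R-c)}$-inequality; finally $R\to\infty$ recovers (\ref{ragaca}) for the standard Gaussian, and arbitrary Gaussians follow by shifts and dilations. The main obstacle is the singular-Gram-matrix cancellation in the $3\times 3$ determinant: positivity cannot simply be read off from Schur products of PSD matrices (the matrix $T$ itself is not PSD), and one must solve the three-equation, two-unknown linear system for $(\alpha,\beta)$ that aligns the next-order term with the PDI. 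The elliptic hypothesis $1-\sigma^2>0$ is precisely what makes this system solvable in the reals.
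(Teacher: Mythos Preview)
Your approach is essentially the paper's approach, but there is a genuine gap in how you choose $a_3$. You introduce $a_3=\mu_1 a_1+\mu_2 a_2$ as a free point on the ellipse and then ``pin down'' $(\alpha,\beta)$ by requiring the $O(1)$ part of the $3\times3$ determinant to align with the PDI. This is backwards. The form of the lemma's conclusion already forces $a_3=(1,0)^T$: the stated inequality has the iterated structure $\int_{x}\bigl(\int_{y}\cdots\,d\gamma_{p,q,x}(y)\bigr)^{1/(R-c)}d\gamma_n(x)$, and the duality step via Lemma~\ref{classical} produces exactly this only when $\varphi(\vec{x}A_3)=\varphi(x)$ depends on $x$ alone, so that one can write the left side of Theorem~A as $\int_x\bigl[\int_y H^R\,d\gamma_{p,q,x}\bigr]\varphi^{1-a(R)}(x)\,d\gamma_n(x)$ and then optimize in $\varphi$. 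For any other $a_3$ the infimum over $\varphi$ yields an $L^{1/(R-c)}$ norm in the variable $\vec{x}A_3$, not in $x$, and the lemma does not follow. It so happens that your constraint system is satisfied by $\mu_1=a$, $\mu_2=b$ (with your sign convention $a_2=(0,1/b)^T$), and then $a\,a_1+b\,a_2=(1,0)^T$; but you never verify this, nor do you argue that your system has a unique solution. The paper (and Lemma~\ref{lem-5}, which you say you are mirroring) simply fixes $A_3=(1,0)^T\otimes I$ from the outset and reads off $\alpha=\tfrac{1+a^2-b^2}{2a}$, $\beta=\tfrac{1-a^2+b^2}{2b}$ directly from $\tilde A^*\tilde C\tilde A$.

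Two smaller points. First, a sign: with the values above one gets $\sigma-\alpha\beta=-ab(1-\sigma^2)$, not $+ab(1-\sigma^2)$ (check $a=b=1$). Second, your observation that $\det(\tilde A^*\tilde C\tilde A)=0$ because three vectors in $\mathbb{R}^2$ are linearly dependent is a clean structural explanation for why the nominal $(R-1)^2$ term cancels; the paper obtains this cancellation only by direct expansion. Once $a_3=(1,0)^T$ is fixed, your determinant asymptotics and the choice of $c=c(H)$ match the paper's computation, and the remaining $2\times2$ minors (which you dismiss as ``analogous'') do require the separate check the paper gives, since positivity of the full determinant and of the $(1,2)$-minor does not by itself force all principal minors to be nonnegative.
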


As before using Lemma~\ref{classical} it is enough to prove (\ref{chveni}) for all  bounded,   positive and  uniformly separated from zero $\varphi$ where $a(R)=R-c$, $c$ will be determined later.

Notice that (\ref{choiceofp}) implies that
\begin{align*}
\langle \tilde{C} \tilde{a}_{3}, \tilde{a}_{3} \rangle =1; \quad \langle \tilde{C}\tilde{a}_{2}, \tilde{a}_{2} \rangle = \frac{1}{b^{2}}\left[ \frac{1}{p}+\frac{q^{2}}{p^{2}}\right]=1; \quad 
\langle \tilde{C}\tilde{a}_{1}, \tilde{a}_{1} \rangle = \frac{1}{a^{2}}\left[1+\frac{1}{p}+2\cdot \frac{q}{p}+\frac{q^{2}}{p^{2}} \right]=1.
\end{align*}
We have
\begin{align*}
\tilde{C}=
\begin{pmatrix}
1 & -\frac{q}{p}\\
-\frac{q}{p} & \frac{1}{p}+\frac{q^{2}}{p^{2}}
\end{pmatrix}=
\begin{pmatrix}
1 & \frac{1-a^{2}+b^{2}}{2}\\
\frac{1-a^{2}+b^{2}}{2} & b^2
\end{pmatrix}.
\end{align*}
Notice also that  $C=\tilde{C}\otimes I_{n \times n}$ is positive-definite if and only if $|a-b|<1$ and $a+b>1$. 
Let $A=(A_{1}, A_{2}, A_{3})$ be the same matrix as before. By Theorem~A,  the inequality 

 \begin{align*}
 \int_{\mathbb{R}^{2n}} H^{R}(f(\vec{x}\, A_{1}), g(\vec{x} \, A_{2})) \varphi^{1-a(R)}(\vec{x}\, A_{3}) d\gamma_{C}(\vec{x}) \geq 
 H^{R}\left( \int_{\mathbb{R}^{n}} f(x)d\gamma, \int_{\mathbb{R}^{n}} g (x)d\gamma\right) \cdot \left(\int_{\mathbb{R}^{n}} \varphi(x ) d\gamma  \right)^{1-a(R)} 
 \end{align*}
holds if and only if $A^{*}CA \bullet \mathrm{Hess}\; B \geq 0$ where again $B(x,y,z)=H^{R}(x,y)z^{1-a(R)}$. We have 
\begin{align*}
A^{*}CA  = 
\begin{pmatrix}
1 & \frac{1-a^{2}-b^{2}}{2ab} & \frac{1+a^{2}-b^{2}}{2a}\\
\frac{1-a^{2}-b^{2}}{2ab} & 1 & \frac{1-a^{2}+b^{2}}{2b}\\
 \frac{1+a^{2}-b^{2}}{2a} &  \frac{1-a^{2}+b^{2}}{2b} & 1
\end{pmatrix}\otimes I_{n \times n}.
\end{align*}
Notice that as before it is enough to check positive definiteness of the following matrix 
\begin{align*}
\tilde{A}^{*}\tilde{C}\tilde{A} \bullet T = 
\begin{pmatrix}
H_{xx}H +N\cdot H_{x}^{2} &(H_{xy}H+N\cdot H_{x}H_{y})\left(\frac{1-a^{2}-b^{2}}{2ab} \right) &H_{x}\left(\frac{1+a^{2}-b^{2}}{2a} \right)\\
(H_{xy}H+N \cdot H_{x}H_{y})\left(\frac{1-a^{2}-b^{2}}{2ab} \right)  &H_{yy}H +N\cdot H_{y}^{2} & H_{y}\left(\frac{1-a^{2}+b^{2}}{2b} \right)\\
H_{x}\left(\frac{1+a^{2}-b^{2}}{2a} \right) &  H_{y}\left(\frac{1-a^{2}+b^{2}}{2b} \right) &\frac{1}{M}
\end{pmatrix},
\end{align*}
where $R-1=N$ and $\frac{a(R)}{R(a(R)-1)}=\frac{1}{M}$. If $R$ is sufficiently large then all diagonal entries are positive. One can notice that all principal  $2 \times 2$ minors have positive determinant provided that $R$ is sufficiently large and  $R>a(R)$. This follows from the fact that 
\begin{align*}
1-\left(\frac{1-a^{2}-b^{2}}{2ab}\right)^{2}&=\frac{((a+b)^{2}-1)(1-(a-b)^{2})}{4a^{2}b^{2}}>0, \\
\quad 1-\left(\frac{1-a^{2}+b^{2}}{2b}\right)^{2}&=\frac{((a+b)^{2}-1)(1-(a-b)^{2})}{4b^{2}}>0,
\end{align*}
 and $N-M = \frac{R}{a(R)}-1>0$. 
 
So it remains to check the sign of $\det(\tilde{A}^{*}\tilde{C}\tilde{A} \bullet T)$. We have 
\begin{align*}
&4a^{2}b^{2}M \det(\tilde{A}^{*}\tilde{C}\tilde{A} \bullet T)=MH(1-(a-b)^{2})((a+b)^{2}-1)[a^{2}H_{xx}H_{y}^{2}+(1-a^{2}-b^{2})H_{xy}H_{x}H_{y}+b^{2}H_{yy}H_{x}^{2}]+\\
&(N-M)\left[\frac{1}{2}N H_{x}^{2}H_{y}^{2}(1-(a-b)^{2})((a+b)^{2}-1)+ H(4a^{2}b^{2}(H_{xx}H_{y}^{2}+H_{yy}H_{x}^{2})-2H_{xy}H_{x}H_{y}(1-a^{2}-b^{2})^{2})\right]\\
&\frac{1}{2}(N-M)N H_{x}^{2}H_{y}^{2}(1-(a-b)^{2})((a+b)^{2}-1)+4a^{2}b^{2}H^{2}\left(H_{xx}H_{yy}-H_{xy}^{2}\left( \frac{1-a^{2}-b^{2}}{2ab}\right)^{2} \right)=\\
&=M I_{1}+(N-M)I_{2}+I_{3}+I_{4}.
\end{align*}

Notice that the first term $I_{1} \geq 0$ by (\ref{numbers}). The second term $I_{2}$ contains a factor of the form $\frac{1}{2}NH_{x}^{2}H_{y}^{2}(1-(a-b)^2)((a-b)^{2}-1)$ which will dominate the remaining subterms as $N \to \infty$. Finally the  sum of the last two terms $I_{3}+I_{4}$ will be positive provided that 
\begin{align}\label{final}
(N-M)N \geq  - \frac{8a^{2}b^{2}H^{2}\left(H_{xx}H_{yy}-H_{xy}^{2}\left( \frac{1-a^{2}-b^{2}}{2ab}\right)^{2} \right)}{H_{x}^{2}H_{y}^{2}(1-(a-b)^{2})((a+b)^{2}-1)}.
\end{align} 
The last inequality holds  provided that $c$ is sufficiently large number. Indeed, notice that if $R>R_{0}$ for some large $R_{0}>0$ then    $(N-M)N = \frac{R-1}{a(R)} (R-a(R)) > c$. On the other hand the right hand side of (\ref{final}) is bounded.  This finishes the proof of the lemma.

\section{Applications}\label{app}

\subsection{How to solve PDE}
In this section we describe how to find solutions of the following PDE
\begin{align}\label{EDS0}
a^{2} \frac{H_{xx}}{H_{x}^{2}} + (1-a^{2}-b^{2})\frac{H_{xy}}{H_{x} H_{y}} + b^{2} \frac{H_{yy}}{H_{y}^{2}}=0.
\end{align}

For simplicity we will  stick to the case when $a=b= \frac{1}{2}$, however,  our arguments can be extended to an arbitrary $a,b>0$ without any difficulties. 
\begin{proposition}\label{EDS10}
Let $V(s,t)$ be a smooth function which satisfies the heat equation $V_{ss}=V_{t}$ in a simply connected domain $\Lambda \subset \mathbb{R}^{2}$. Assume that 
\begin{align}\label{difeo01}
(s,t) \to \left( \frac{e^{-s-t}}{2} (V_{t}+V_{s}), \frac{e^{s-t}}{2}(V_{t}-V_{s})\right)
\end{align}
is a $C^{\infty}$ diffeomorphism from $\Lambda$ onto $\mathrm{int}(\Omega)=\mathrm{int}(I\times J)$. Then the smooth function $H(x,y)$ parametrized as 
\begin{align}\label{parametrizacia}
H\left(\frac{e^{-s-t}}{2} (V_{t}+V_{s}), \frac{e^{s-t}}{2}(V_{t}-V_{s}) \right) = V_{t}-V
\end{align}
solves PDE
\begin{align*}
\frac{H_{xx}}{H_{x}^{2}} + 2\frac{H_{xy}}{H_{x} H_{y}} +  \frac{H_{yy}}{H_{y}^{2}}=0,
\end{align*}
and it has the property that $H_{x}, H_{y}> 0$.
\end{proposition}
\begin{proof}
The conclusion of the proposition can be checked by a straightforward computation, but let us explain it in details how the argument works. 
First we linearize (\ref{EDS0}). For now let $a,b>0$.  Let $U : \tilde{\Lambda}\subset \mathrm{int}(\mathbb{R}^{2}_{+}) \to \mathbb{R}$ be a smooth function such that  
\begin{align}\label{difeo}
(p,q) \mapsto (U_{p}, U_{q})
\end{align}
 is a smooth diffeomorphism from $\tilde{\Lambda}$ onto $\mathrm{int}(\Omega)$.  Define $H(x,y)$ using the following system of equations 
\begin{align}\label{EDS1}
\begin{cases}
x = U_{p};\\
y=U_{q};\\
H(x,y) = px+qy - U(p,q).
\end{cases}
\end{align}
Since $U$ is a smooth diffeomorphism, we can find smooth functions $p=p(x,y), q=q(x,y)$ such that the first two equations in (\ref{EDS1}) are satisfied. Differentiating the third equation in (\ref{EDS1}) it follows that $H_{x}=p>0, H_{y}=q>0$. Therefore $H_{xx}=p_{x}, H_{yy}=q_{y}$ and  $p_{y}=q_{x}=H_{xy}$. Taking the differential of the first two equations of (\ref{EDS1}) we obtain 
\begin{align}\label{EDS2}
&p_{x} = \frac{U_{qq}}{U_{pp}U_{qq}-U_{pq}^{2}}, \quad q_{y} = \frac{U_{pp}}{U_{pp}U_{qq}-U_{pq}^{2}} \quad \text{and} \quad p_{y} = -\frac{U_{pq}}{U_{pp}U_{qq}-U_{pq}^{2}}.
\end{align}
Notice that since the mapping (\ref{difeo}) is a smooth diffeomorphism we have $U_{pp}U_{qq}-U_{pq}^{2}\neq 0$, therefore the expressions in (\ref{EDS2}) are well defined. Notice that the transformation (\ref{EDS1}) linearizes the Monge--Amp\`ere type PDE (\ref{EDS0}). Indeed,  PDE (\ref{EDS0}) takes the form 
\begin{align}\label{EDS3}
\frac{a^{2}b^{2}}{p^{2}q^{2} (U_{pp}U_{qq}-U_{pq}^{2})}\times \left( \frac{q^{2} U_{qq}}{b^{2}}-\left( \frac{1-a^{2}-b^{2}}{a^{2}b^{2}}\right)pqU_{pq} + \frac{p^{2}U_{pp}}{a^{2}}\right)=0.
\end{align}
Since $p,q,a,b>0$ and $U_{pp}U_{qq}-U_{pq}^{2}\neq 0$ we can ignore the first factor in the left hand side of (\ref{EDS3}). Next, if define $\widetilde{U}(u,v)$ as $U(p,q)=\widetilde{U}(a\ln p, b \ln q)$, then equation (\ref{EDS3}) takes the following standard form 
\begin{align}\label{EDS4}
\widetilde{U}_{uu}+\widetilde{U}_{vv} - \left(\frac{1-a^{2}-b^{2}}{ab}\right)\widetilde{U}_{uv} - \frac{\widetilde{U}_{u}}{a} - \frac{\widetilde{U}_{v}}{b}=0.
\end{align}
Pick numbers $u_{1}, u_{2}, v_{1}, v_{2}$ such that $u_{1}v_{2}-v_{1}u_{2}\neq 0$, and define the function $V(s,t)$ as 
\begin{align*}
\widetilde{U}(u,v)=V(u_{1} u + v_{1}v, u_{2}u+v_{2}v).
\end{align*}
After the direct computations we notice that (\ref{EDS4}) takes the form 
\begin{align}
&\left(u_{1}^{2}+u_{2}^{2} - \frac{1-a^{2}-b^{2}}{ab}u_{1}v_{1} \right)V_{ss} + \left(u_{2}^{2}+v_{2}^{2}-\frac{1-a^{2}-b^{2}}{ab}u_{2}v_{2}\right)V_{tt}+\label{EDS5}\\
&\left(2(u_{1}v_{1}+u_{2}v_{2})+u_{1}v_{2}+u_{2}v_{1} \right)V_{st} - \left(\frac{u_{1}}{a}+\frac{v_{1}}{b}\right)V_{s} - \left(\frac{u_{2}}{a}+\frac{v_{2}}{b}\right)V_{t}=0.\nonumber
\end{align}
Next, if we consider $a=b=1/2$, then we see that choosing $u_{1}=-v_{1}=u_{2}=v_{2}=1$ the equation (\ref{EDS5}) simplifies to the heat equation 
\begin{align}\label{heateq1}
V_{ss}-V_{t}=0.
\end{align}
Tracing back to our  change of variables we obtain 
\begin{align}\label{amockda}
U(p,q) = V\left(\frac{1}{2}\ln \left(\frac{p}{q}\right), \frac{1}{2}\ln(pq)\right). 
\end{align}
Therefore, the system of equations  (\ref{EDS1}), namely, $H(U_{p}, U_{q}) = px+qy-U(p,q)$ transform to (\ref{parametrizacia}), and the fact that $(p,q)\mapsto (U_{p}, U_{q})$ is diffeomorphism implies  that the mapping (\ref{difeo01}) is a smooth diffeomorphism. 
\end{proof}

Such a systematic approach to Monge--Amp\'ere type PDEs the reader can find in a more comprehensive theory of Exterior Differential Systems of Bryant--Griffiths, see, for example, ~\cite{BryantG}. 

\begin{remark}\label{hypokrat}
In general, for an arbitrary $a,b>0$, if $\frac{|1-a^{2}-b^{2}|}{2ab}<1$, then (\ref{EDS5}), after a suitable change of variables, reduces to an elliptic equation, namely, Laplacian eigenvalue problem.  If $\frac{|1-a^{2}-b^{2}|}{2ab}=1$, then (\ref{EDS5}) reduces to Parabolic equation, namely, heat equation. 
\end{remark}

\begin{remark}
For the mapping (\ref{difeo01}) to be smooth diffeomorphism we should assume that the determinant of its Jacobian  matrix is nonzero. Using $V_{ss}=V_{t}$ the determinant takes the form $-\frac{e^{-2t}}{2}(V_{sss}-V_{s})^{2}$, and we obtain the necessary condition  $V_{s}\neq V_{sss}$. 
\end{remark}

\begin{remark}
If one is only interested with partial differential  inequality (\ref{numbers}) unlike (\ref{EDS0}), then instead of requiring $V_{ss}=V_{t}$ we need only  require that $V_{ss}\leq V_{t}$ and $U_{pp}U_{qq}-U_{pq}^{2}<0$ where $U$ is defined as in (\ref{amockda}). 
\end{remark}

Next, let us illustrate how Proposition~\ref{EDS10} works on the examples. 
\subsection{The Ehrhard function}
Take 
\begin{align*}
V(s) := \left(\frac{e^{s}+e^{-s}}{2}-1\right) \mathbbm{1}_{(0, \infty)}(s).
\end{align*}
We recall that the heat extension $V(s,t)$ of the initial data $V(s)$ can be written as 
\begin{align*}
V(s,t)=\int_{\mathbb{R}}V(s+\sqrt{2t} y) d\gamma_{1}(y) = \frac{1}{2}\left(e^{s+t}\Psi\left(\frac{s}{\sqrt{2t}}+\sqrt{2t}\right)+e^{-s+t}\Psi\left(\frac{s}{\sqrt{2t}}-\sqrt{2t}\right) \right)-\Psi\left(\frac{s}{\sqrt{2t}}\right),
\end{align*}
where $t\geq 0$, and $\Psi(x) = \gamma_{1}((-\infty, x])$ is the Gaussian distribution function. 
After straightforward computations it follows that
\begin{align*}
&\frac{e^{-s-t}}{2}(V_{t}+V_{s})=\frac{1}{2}\Psi\left(\frac{s}{\sqrt{2t}}+\sqrt{2t}\right);\\
&\frac{e^{s-t}}{2} (V_{t}-V_{s})=\frac{1}{2}\Psi\left(\frac{s}{\sqrt{2t}}-\sqrt{2t}\right);\\
&V_{t}-V = \Psi\left(\frac{s}{\sqrt{2t}}\right).
\end{align*} 
Denoting $x = \frac{1}{2} \Psi(\frac{s}{\sqrt{2t}}+\sqrt{2t})$, and $y=\frac{1}{2} \Psi(\frac{s}{\sqrt{2t}}-\sqrt{2t})$ for $(x,y) \in (0,1/2)^{2}$ with $x>y$, and using (\ref{parametrizacia}) we obtain 
\begin{align}\label{vipovet}
H(x,y) = \Psi\left(\frac{\Psi^{-1}(2x)+\Psi^{-1}(2y)}{2}\right) \quad\text{where} \quad x>y \quad{and}\quad  (x,y) \in (0,1/2)^{2}.
\end{align}
Stretching the variables  $\tilde{x} = 2x$, $\tilde{y}=2y$ and extending the definition of $H$ in a natural way to the domain $y>x$ we obtain the Ehrhard function (see Section~\ref{lyapa}). 

Next we consider a more peculiar example.
\subsection{Example with Hermite polynomial}
Take
\begin{align*}
V(s)=s^{2} \quad \text{for}\quad s<-1. 
\end{align*}
Clearly  $V(s,t)=s^{2}+2t$ for $t\in \mathbb{R}$ solves the heat equation with $V(s,0)=s^{2}$. In this case we have 
\begin{align*}
&\frac{e^{-s-t}}{2}(V_{t}+V_{s})=(s+1)e^{-s-t};\\
&\frac{e^{s-t}}{2} (V_{t}-V_{s})=(1-s)e^{s-t};\\
&V_{t}-V = 2-s^{2}-2t.
\end{align*}
Notice that the mapping 
\begin{align*}
(s,t) \mapsto \left((s+1)e^{-s-t}, (1-s)e^{s-t}\right)
\end{align*}
is a smooth diffeomorphism from $(-\infty-1,0)\times \mathbb{R}$ onto $(-\infty,0) \times (0, \infty)$. Indeed, let 
\begin{align}
&(1+s)e^{-s-t}=x<0; \label{eq32}\\ 
&(1-s)e^{s-t}=y>0. \label{eq33}
\end{align}
Then $\ell(s):= \frac{1-s}{1+s} e^{2s}$ maps $(-\infty,-1)$  onto $(-\infty,0)$, and it is decreasing. Let $r(s)$ be its inverse map. It follows from (\ref{eq32}) and (\ref{eq33}) that 
\begin{align*}
&s = r\left(y/x\right);\\
&t = \frac{1}{2}\ln \left(\frac{1-r^{2}(y/x)}{xy}\right).
\end{align*}
Thus we obtain 
\begin{align*}
H(x,y) =V_{t}-V =  2- r^{2}(y/x) - \ln \left(\frac{1-r^{2}(y/x)}{xy}\right), 
\end{align*}
with $H \in C^{3}((-\infty,0)\times (0, \infty))$, and $H$ satisfies (\ref{EDS0}). Therefore by Thoerem~\ref{mth1} we obtain inequality 
\begin{align}\label{gakotrda}
&\int_{\mathbb{R}^{n}} \mathrm{ess\,inf}_{y}\;  r^{2}\left( \frac{f(2(x-y))}{g(2y)}\right)+\ln \left(r^{2}\left( \frac{f(2(x-y))}{g(2y)}\right)-1\right) - \ln |f(2(x-y))g(2y)|\, d\gamma(x) \leq \\
&r^{2}\left(\frac{\int_{\mathbb{R}^{n}} f d\gamma}{\int_{\mathbb{R}^{n}} g d\gamma}\right)+ \ln \left(r^{2}\left(\frac{\int_{\mathbb{R}^{n}} f d\gamma}{\int_{\mathbb{R}^{n}} g d\gamma}\right)-1\right) - \ln \left|\int_{\mathbb{R}^{n}} f d\gamma \int_{\mathbb{R}^{n}} g d\gamma \right| \nonumber
\end{align}
for all bounded Borel functions $f<0$, $g>0$ and uniformly separated from zero. We do not know if the estimate (\ref{gakotrda}) can be obtained from the Ehrhard inequality. 

\bigskip 

Sometimes one can try to guess a function $H(x,y)$ which would satisfy (\ref{numbers}). Let us show how this guess works.

Next, we will assume that $a,b>0$, $a+b\geq 1$ and  $|a-b|\leq 1$.   For any real $p>0$, and any Borel function $f$ we define 
\begin{align*}
\| f \|_{L^{p}(d\gamma)} = \left(\int_{\mathbb{R}^{n}} |f|^{p} d\gamma \right)^{1/p}. 
\end{align*}

\subsection{Young's functions}
\begin{corollary}\label{ee-1}
 Let $p, q>0$. The following inequality holds
\begin{align}\label{vse-1}
\int_{\mathbb{R}^{n}} \esssup_{y\in \mathbb{R}^{n}}\; f^{p}\left( \frac{x-y}{a}\right) g^{q}\left(\frac{y}{b}\right)  d\gamma \geq \left( \int_{\mathbb{R}^{n}} fd\gamma \right)^{p} \left(\int_{\mathbb{R}^{n}} g d\gamma \right)^{q} 
\end{align}
for all nonnegative Borel functions $f,g \in L^{1}(d\gamma_{n})$ if and only if $\frac{a^{2}}{p}+\frac{b^{2}}{q}\leq 1$. 
\end{corollary}
  We notice that the case $p=a, q=b$ with $a+b=1$ recovers the Pr\'ekopa--Leindler inequality. 
\begin{proof}

First let us obtain (\ref{vse-1}) for bounded $f$ and $g$ and uniformly separated from zero. Set $H(x,y) = x^{p}y^{q}$  on some bounded closed rectangular domain $\Omega\subset \mathrm{int}(\mathbb{R}^{2}_{+})$.  Then  (\ref{ragaca}) holds  if and only if 
  $\frac{a^{2}}{p}+\frac{b^{2}}{q}\leq 1$.   Indeed, notice that (\ref{numbers}) takes the form 
 \begin{align*}
 a^{2} \frac{H_{xx}}{H_{x}^{2}} + (1-a^{2}-b^{2})\frac{H_{xy}}{H_{x} H_{y}} + b^{2} \frac{H_{yy}}{H_{y}^{2}}=\frac{1}{x^{p}y^{q}}\left( 1-\frac{a^{2}}{p}-\frac{b^{2}}{q} \right) \geq 0.
  \end{align*}
 Thus we obtain (\ref{vse-1}) for bounded functions $f,g$ and uniformly separated from zero, i.e., $f, g \geq \varepsilon$ for some $\varepsilon >0$. The general case  of bounded  $f,g$ follows by considering $\tilde{f}=f + \varepsilon$ and $\tilde{g} = g + \varepsilon$. By sending $\varepsilon \to 0$ and using the dominated convergence theorem we obtain (\ref{vse-1}) for bounded $f,g$ with positive integrals. 
   
   For arbitrary $f$ and $g$ we can approximate   by bounded $f_{n}:=\min\{n, f\} \leq f$ and $g_{n}:=\min\{g, n\} \leq g$ with $f_{n} \to f$ in $L^{1}$ and  $g_{n} \to g$ in $L^{1}$. Since $\esssup_{y} f^{p}((x-y)/a)g(y/b)| \geq \esssup_{y} f_{n}^{p}((x-y)/a)g^{q}_{n}(y/b)$ almost everywhere we obtain the desired result. 
\end{proof}
  
  \vskip1cm
  
\subsection{Minkowski's functions: reverse inequalities}

 \begin{corollary}\label{ee-2}
 Let $p,q, r>0.$ Then 
 \begin{align}\label{minkowski}
\left\|  \esssup_{y\in \mathbb{R}^{n}}\; \left( f\left( \frac{x-y}{a}\right) +  g\left(\frac{y}{b}\right)\right) \right\|_{L^{r}(d\gamma)} \geq \| f\|_{L^{p}(d\gamma)} +\| g\|_{L^{q}(d\gamma)}
\end{align}
for all  nonnegative $f \in L^{p}(d\gamma)$ and $g \in L^{q}(d\gamma)$ if and only if $0<p,\, q \leq 1$ and $r \geq 1-(a\sqrt{1-p}+b\sqrt{1-q})^{2}$. 
 \end{corollary}
 \begin{proof}
 Indeed, consider $H(x,y)=(x^{\frac{1}{p}}+y^{\frac{1}{q}})^{r}$. 
  Let $s=y^{1/q} x^{-1/p}$. Then notice that (\ref{numbers}) takes the form
 \begin{align}
 &\frac{1}{sr(x^{1/p}+y^{1/q})^{r}}\left( a^{2}s^{2}(1-p)+s(b^{2}(1-q)+a^{2}(1-p)-1+r)+b^{2}(1-q) \right)= \label{ex21} \\
 &\frac{1}{sr(x^{1/p}+y^{1/q})^{r}}\left( (as\sqrt{(1-p)}-b\sqrt{1-q})^{2}+s(r-1+(a\sqrt{1-p}+b\sqrt{1-q})^{2}) \right). \label{ex22}
 \end{align}
 For the quantity in (\ref{ex21}) to be nonnegative it is necessary that $p,q \leq 1$. We can assume that $p,q \neq 1$ otherwise the conclusion follows easily. 
Finally (\ref{ex22}) implies that (\ref{ragaca}) holds if and only if $r \geq 1 - (a\sqrt{1-p}+b\sqrt{1-q})^{2}$. Now we consider (\ref{ragaca}) with the test functions $\tilde{f} = f^{p}$,  $\tilde{g} = g^{q}$ and we obtain (\ref{minkowski}). 
 \end{proof}
 
 It is interesting to mention that if $a\sqrt{1-p}+b\sqrt{1-q}=1$ then we can take $r \to 0$  in (\ref{minkowski}), since $\lim\| h\|_{L^{r}} \to \exp(\int \ln  |h| d\mu)$ (assuming $-\infty<\int \ln |h|<\infty)$, we obtain the following corollary
\begin{corollary}\label{ee-3}
Let $p,q>0$ be such that $a\sqrt{1-p}+b\sqrt{1-q}=1$. Then 
 \begin{align*}
\int_{\mathbb{R}^{n}}\;   \esssup_{y}\;  \ln  \left( f\left( \frac{x-y}{a}\right) +  g\left(\frac{y}{b}\right)\right) d\mu  \geq \ln \left( \| f\|_{L^{p}(d\gamma)} +\| g\|_{L^{q}(d\gamma)} \right).
\end{align*}
for all nonnegative $f \in L^{p}(d\gamma)$ and $g \in L^{q}(d\gamma)$. 
\end{corollary}

\bigskip

\subsection{Ehrhard inequality and the Gaussian measure}\label{lyapa}

In what follows $H$ will not belong to the class $C^{3}(\Omega)$. Instead we will only have $H \in C^{3}(\mathrm{int}(\Omega))$ and $H$ is lower-semicontinuous on $\Omega$.  Thus we cannot directly apply Theorem~\ref{mth1}. In order to avoid this obstacle we will slightly modify the functions $H$ and then pass to the limit in (\ref{ragaca}). For example, if $\Omega=[0,1]^{2}$ we will consider auxiliary functions $H_{\varepsilon_{1}, \varepsilon_{2}, \delta_{1}, \delta_{2}}(x,y)=H(\varepsilon_{1}+x \delta_{1}, \varepsilon_{2} + y\delta_{2} )$ for $0<\varepsilon_{1}, \varepsilon_{2}, \delta_{1}, \delta_{2}<1$, and we apply (\ref{ragaca}) to these functions. Finally we just send $\varepsilon_{1}, \varepsilon_{2} \to 0$ and $\delta_{1}, \delta_{2} \to 1$ in the appropriate order.

 Let $\Psi(s) = \int_{-\infty}^{s}  d\gamma$ (this is slightly different notation unlike the classical one $\Phi$). The Ehrhard inequality states that if $a+b  \geq 1$ and $|a-b| \leq 1$ then 
 \begin{align}\label{ehrhard}
 \gamma_{n}(aA+bB) \geq \Psi\left( a \Psi^{-1}(\gamma_{n}(A))+b\Psi^{-1}(\gamma_{n}(B))\right)
 \end{align}
 for all Borel measurable $A,B \subset \mathbb{R}^{n}$ such that the Minkowski sum $aA+bB$ is measurable. The equality is attained in (\ref{ehrhard}) for the half spaces with one containing the other one. 
 
  Inequality (\ref{ehrhard}) was proved by Ehrhard \cite{eh1} when $A$ and $B$ are convex sets under the assumptions that $a+b=1$.  Ehrhard, by developing Gaussian symmetrization method, showed that (\ref{ehrhard}) is enough to prove in the case $n=1$. It was an open problem whether (\ref{ehrhard}) holds for the Borel measurable sets $A$ and $B$ (see \cite{ledoux2}). Latala \cite{lat1} showed that the inequality is true if at least one of the sets is convex (again under the constraints  $a+b=1$).  It was also noticed that the inequality is equivalent to its functional version 
\begin{align}\label{borel-result12}
 \int_{\mathbb{R}^{n}}\sup_{ax+by=t} \Psi\left( a \Psi^{-1}(f(x))+b\Psi^{-1}(g(y)) \right) d\gamma_{n} \geq \Psi\left( a \Psi^{-1}\left(\int f d\gamma_{n} \right)+b\Psi^{-1}\left(\int g d\gamma_{n}\right) \right)
\end{align}
for all smooth $f,g : \mathbb{R}^{n} \to  [\delta,1-\delta]^{2}$ for some $0<\delta<1/2$. 
 Finally,  Borell in his series of papers \cite{bor1,bor2}  using a subtle maximum principle (see Lemma~1 in \cite{bar1} which was later called {\em hill property} in \cite{IV}) obtained (\ref{borel-result12}). Recently,  Ramon~\cite{ramon} gave an elegant  proof of the Ehrhard inequality using $\inf \sup$ representation via stochastic processes for a certain Bellman function. Also recently the author learned that Neeman--Paouris \cite{PN} gave an interpolation proof of the Ehrhard inequality using a more subtle version of  Theorem~A, and they asked a question if one can deduce the Ehrhard inequality using solely Theorem~A (the positive answer wad demonstrated in the previous section). 
 
Ehrhard inequality can be used to find {\em isoperimetric profile} for the Gaussian measure.
 Let $d\mu$ be a probability measure on $\mathbb{R}^{n}$. Let $A_{\varepsilon}$ be an epsilon neighborhood of the set $A$. Set 
\begin{align*}
\mu^{+}(A) = \liminf_{\varepsilon \to 0}\,  \frac{\mu(A_{\varepsilon}) - \mu(A)}{\varepsilon} \quad \text{and} \quad I_{\mu}(p) = \inf_{\mu(A)=p} \mu^{+}(A).
\end{align*}

The function $I_{\mu}(p)$  is called isoperimetric profile of the measure $\mu$.   $I_{\mu}(p)$ measures minimal {\em perimeter} of the set $A$ under the constraint that $\mu(A)=p$ is fixed. One can obtain from (\ref{ehrhard}) that $I_{\gamma_{n}}(p) \geq  \Psi' (\Psi^{-1}(p))$ which is regarded as an infinitesimal version of  $\gamma_{n}(A_{\varepsilon}) \geq \Psi(\Psi^{-1}(p)+\varepsilon)$. A subtle result of Bobkov \cite{bobkov1} asserts that for any even, log-concave measure  $d\mu$ on the real line we have $I_{\mu}(p) = \Phi'(\Phi^{-1}(p))$ where $\Phi(x) = \int_{-\infty}^{x} d\mu$.  We should also  mention that if $d\mu$ is a probability measure with positive distribution function $\Phi$  on the real line then there is a trivial upper bound   
\begin{align*}
\inf_{A, B \subset \mathbb{R}:\; \mu(A)=x, \mu(B)=y} \mu(aA+bB) \leq \Phi(a\Phi^{-1}(x)+b\Phi^{-1}(y)) \quad \text{for all} \quad x, \, y >0.
\end{align*}
The inequality is exhausted by half-lines. If $d\mu$ is a log-concave measure  then we also have a trivial lower bound $\mu(aA+bB) \geq \mu(A)^{a}\mu(B)^{b}$ for $a+b=1$ via the Pr\'ekopa--Leindler inequality. 

These considerations  motivate to the following question:  which measures $d\mu$ satisfy  (\ref{ehrhard}) or (\ref{borel-result12}) with $\Psi$ replaced by $\Phi(x)$, i.e., with a distribution function of  $d\mu$. Further by $d\mu_{n}$ we denote product measure, i.e., $\mu_{n}  = \mu \times \mu \times \ldots \times  \mu$. 

 \begin{theorem}\label{theorem2}
Let $d\mu$ be   a probability measure with positive density function  $\varphi=e^{-V}, V \in C^{2}(\mathbb{R})$ and finite absolute fifth moment. 
Let $\Phi(s) = \int_{-\infty}^{s} d\mu$. Let $a$ and $b$ be some fixed  positive numbers. 
Let 
$$
H(x,y)= \Phi(a \Phi^{-1}(x) + b \Phi^{-1}(y)) \quad \text{on}\quad  [0,1]^{2} \setminus \{(0,1) \cup (1,0)\},
$$
 and set $H(0,1)=H(1,0)=0$.  
\begin{itemize}
\item[(i)]
For the inequality 
 \begin{align}\label{borel-result}
 \int_{\mathbb{R}^{n}}\esssup_{y \in \mathbb{R}^{n}} \, H\left(  f\left( \frac{x-y}{a}\right),  g\left( \frac{y}{b}\right) \right)  d\mu_{n}(x) \geq H\left( \int f d\mu_{n},\int g d\mu_{n} \right)
\end{align}
  to hold for $n =1 $ and  all Borel measurable  $f,g :\mathbb{R}^{n} \to [0,1]$   it is necessary that 
\begin{align}\label{log-prime}
  V'(ax+by)\leq aV'(x)+bV'(y), \quad |1-a^{2}-b^{2}|\leq 2ab, \quad\quad  \int_{\mathbb{R}^{n}} x d\mu =0  \quad \text{if} \quad a+b>1. 
\end{align}
 \item[(ii)] If $d\mu_{n}= d\gamma_{n}$  is the Gaussian measure then (\ref{log-prime}) is necessary and sufficient  for the inequality (\ref{borel-result}) to hold for  any $n\geq 1$, and all Borel $f,g :\mathbb{R}^{n} \to [0,1]$.   
 \end{itemize}
 \end{theorem}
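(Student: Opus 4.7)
The strategy is to reduce the theorem to a direct application of Theorem~\ref{mth1} for the specific function $H(x,y) = \Phi(a\Phi^{-1}(x) + b\Phi^{-1}(y))$. Since $H$ is only $C^3$ on the open square $(0,1)^2$ (and merely lower semicontinuous at the two corners $(0,1)$ and $(1,0)$), I would first work on the compact subrectangles $\Omega_\varepsilon := [\varepsilon,1-\varepsilon]^2$, where $H_x, H_y$ are both strictly positive, and then pass to the limit $\varepsilon\to 0$ via the truncation scheme indicated in the opening paragraph of Section~\ref{app}.

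For the necessity direction, Theorem~\ref{mth1} applied on $\Omega_\varepsilon$ immediately gives the constraint $|1-a^2-b^2|\le 2ab$ and the centering condition $\int x\,d\mu = 0$ when $a+b>1$. The first condition in (\ref{log-prime}) must then come from the partial differential inequality in (\ref{numbers}). Introducing $u=\Phi^{-1}(x)$, $v=\Phi^{-1}(y)$, $w=au+bv$, and using $(\Phi^{-1})'(x) = 1/\varphi(u) = e^{V(u)}$, a direct computation yields
\[
H_x = a\,e^{V(u)-V(w)},\qquad H_y = b\,e^{V(v)-V(w)},\qquad H_{xy} = -ab\,V'(w)\,e^{V(u)+V(v)-V(w)},
\]
together with $H_{xx}/H_x^2 = e^{V(w)}[V'(u)-aV'(w)]/a$ and its symmetric analogue for $H_{yy}/H_y^2$. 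Substituting into the left-hand side of the PDI, all exponential prefactors cancel and one is left with
\[
e^{V(w)}\bigl[aV'(u) + bV'(v) - V'(w)\bigr].
\]
Since $(u,v)$ ranges over all of $\mathbb{R}^2$ as $(x,y)$ ranges over $(0,1)^2$, nonnegativity of this expression is exactly $V'(au+bv) \le aV'(u)+bV'(v)$, which is the first condition of (\ref{log-prime}).

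For the sufficiency in the Gaussian case $d\mu_n = d\gamma_n$, the potential is $V(t) = t^2/2 + \mathrm{const}$, whence $V'$ is linear and the PDI holds with equality; the constraints on $(a,b)$ are assumed and the vanishing mean is automatic. Hence the sufficiency part of Theorem~\ref{mth1}, applied directly in dimension $n$ on the restricted function $H|_{\Omega_\varepsilon}$, yields (\ref{borel-result}) for test functions $(f,g)$ valued in $\Omega_\varepsilon$. Sending $\varepsilon \to 0$ with the truncation $f_\varepsilon := \max(\varepsilon,\min(1-\varepsilon,f))$ (and likewise for $g$) recovers the full statement for arbitrary Borel measurable $[0,1]$-valued $(f,g)$. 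The main technical obstacle is precisely this last truncation step: one must verify that $\esssup_y H(f_\varepsilon(\cdot),g_\varepsilon(\cdot))$ converges up to $\esssup_y H(f(\cdot),g(\cdot))$ in the $d\gamma_n$-integral, which I would handle via monotonicity of $H$ in each coordinate (so that truncation gives a monotone approximation from below) combined with the Lebesgue dominated convergence theorem using the uniform bound $H\le 1$.
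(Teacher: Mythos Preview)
Your proposal is correct and follows essentially the same route as the paper: restrict $H$ to $[\delta,1-\delta]^2$, invoke Theorem~\ref{mth1}, and compute the PDI explicitly to obtain the condition on $V'$ (your computation matches the paper's, which writes it in terms of $\varphi'/\varphi=-V'$).

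One technical point on the sufficiency limit: the two-sided truncation $f_\varepsilon=\max(\varepsilon,\min(1-\varepsilon,f))$ does \emph{not} give a monotone approximation from below, since the $\max(\varepsilon,\cdot)$ part pushes values up wherever $f<\varepsilon$; so your stated justification for the convergence of the left-hand side is not quite right. The paper avoids this by modifying $H$ rather than $(f,g)$: it sets $H_{\varepsilon,\delta}(x,y)=H(\varepsilon+\delta x,\varepsilon+\delta y)$, which is $C^3$ on $[0,1]^2$ and still satisfies the PDI (the expression $H_{xx}/H_x^2$ etc.\ is invariant under affine reparametrization in each variable). Then $h_{\varepsilon,\delta}:=\esssup_y H_{\varepsilon,\delta}(f,g)$ decreases in $\varepsilon$ to $h_{0,\delta}$ (uniform continuity of $H$ on the compact $[0,\delta]^2$, where $H$ \emph{is} continuous), and $h_{0,\delta}\le h_{0,1}$ by monotonicity of $H$; this controls the left side cleanly, and the right side is handled by continuity of $H$ at $(\int f,\int g)$ together with the observation that if $(\int f,\int g)\in\{(0,1),(1,0)\}$ the inequality is trivial since $H\ge 0$.
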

 Further inequality (\ref{borel-result}) we call the Ehrhard inequality. 
\begin{proof} First we prove the necessity part. 
We consider  $H(x,y)$ on  the domain $\Omega_{\delta} = [\delta, 1-\delta]^{2}$ for some $\delta \in (0,1/2)$. Clearly $H \in C^{3}(\Omega_{\delta})$ and in particular (\ref{borel-result}) holds for Borel measurable $f,g : \mathbb{R}^{n}  \to [\delta, 1-\delta]$.  Notice that $H_{x}$ and $H_{y}$ never vanish in $\Omega_{\delta}$. It remains to use the Theorem~\ref{mth1}.  Direct computations show that 
\begin{align*}
 &a^{2} \frac{H_{xx}}{H_{x}^{2}} + (1-a^{2}-b^{2})\frac{H_{xy}}{H_{x} H_{y}} + b^{2} \frac{H_{yy}}{H_{y}^{2}}  = \\
 &\frac{1}{\varphi(a\Phi^{-1}(x)+b\Phi^{-1}(y))}\left[ -a\frac{\varphi'(\Phi^{-1}(x))}{\varphi(\Phi^{-1}(x))}-b\frac{\varphi'(\Phi^{-1}(y))}{\varphi(\Phi^{-1}(y))}+\frac{\varphi'(a\Phi^{-1}(x)+b\Phi^{-1}(y))}{\varphi(a\Phi^{-1}(x)+b\Phi^{-1}(y))}\right].
 \end{align*}
 Therefore  if we introduce new variables $\tilde{x} = \Phi^{-1}(x)$ and $\tilde{y} = \Phi^{-1}(y)$ we see that  by Theorem~\ref{mth1} the necessary condition for  (\ref{borel-result})  is (\ref{log-prime}).  
 
For the sufficiency condition we should introduce an auxiliary  function 
\begin{align}\label{auxiliary}
H_{\varepsilon, \delta}(x,y) = H(\varepsilon+x\delta, \varepsilon+y\delta) \quad \text{for} \quad 0<\varepsilon, \delta<1, \quad \varepsilon+\delta<1.
\end{align}

 Notice that $H_{\varepsilon, \delta} \in C^{3}([0,1]^{2})$ and it satisfies (\ref{numbers}). By Theorem~\ref{mth1} we have (\ref{ragaca}) for $H_{\varepsilon, \delta}$ and $\mu=\gamma_{n}$. We consider 
\begin{align*}
h_{\varepsilon, \delta}(x)  = \esssup_{y \in \mathbb{R}^{n}} \; H_{\varepsilon, \delta }\left(f\left(\frac{x-y}{a}\right), g\left(\frac{y}{b}\right)\right). 
\end{align*}
From $\lim_{\varepsilon \to 0+}\|H_{\varepsilon, \delta} - H_{0, \delta}\|_{C([0,1])}=0$ it follows that $\ h_{\varepsilon, \delta}  \to h_{0,\delta}$ in $L^{1}(d\mu_{n})$. 
 Using the fact that $H$ is increasing in each variable we obtain $h_{0, \delta} \leq h_{0, 1}$. This gives the left hand side of (\ref{borel-result}). For the right hand side notice that  if the point $\left(\int f, \int g \right)$ coincides with $(0,1)$ or $(1,0)$ then there is nothing to prove because $H \geq 0$. In the remaining case when $\left(\int f, \int g \right)$ is the point of continuity of $H$ in $[0,1]^{2}$  we obtain the right hand side of (\ref{borel-result}) by taking the limit. 
\end{proof}

The next corollary says that  in the class of even probability measures on the real line with smooth positive density and finite moments,  the only measures which satisfy the Ehrhard inequality are the Gaussian measures. 
 \begin{corollary}\label{even-measure}
An even probability  measure $d\mu$ with finite absolute fifth moment and the density function  $e^{-V}, V \in C^{2}(\mathbb{R})$ satisfies Ehrhard-type inequality (\ref{borel-result}) with $n=1$ and  some $a,b >0$  if and only if it is the Gaussian measure. 
 \end{corollary}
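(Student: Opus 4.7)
The plan is to combine the necessary condition from Theorem~\ref{theorem2} with the symmetry forced by evenness of $\mu$. The forward direction is immediate: if $\mu$ is Gaussian then (\ref{borel-result}) is the sufficiency statement of Theorem~\ref{theorem2}, and an even measure automatically has vanishing first moment so the hypothesis $\int x\,d\mu=0$ is free.

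For the converse, I first apply the necessity part of Theorem~\ref{theorem2} to turn the Ehrhard inequality into the pointwise functional inequality $V'(ax+by)\leq aV'(x)+bV'(y)$ for all $x,y\in\mathbb{R}$. Since $\mu$ is even, $V$ is even and hence $V'$ is odd; in particular $V'(0)=0$. Setting $y=0$ gives $V'(ax)\leq aV'(x)$ for every $x$; replacing $x$ by $-x$ and using oddness reverses this to $V'(ax)\geq aV'(x)$, so $V'(ax)=aV'(x)$. The analogous choice $x=0$ yields $V'(by)=bV'(y)$.

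Next, I substitute $u=ax$ and $v=by$ into the original inequality and use the two homogeneities just derived to convert it into the subadditivity statement
\[
V'(u+v)\leq V'(u)+V'(v)\qquad\text{for all }u,v\in\mathbb{R}.
\]
(Here $a,b>0$ ensure $u,v$ range over all of $\mathbb{R}$.) Replacing $(u,v)$ by $(-u,-v)$ and invoking oddness once more yields the reverse inequality, so $V'$ is additive. Because $V'\in C^{1}$ is continuous, Cauchy's functional equation forces $V'(x)=kx$ for some constant $k$. Hence $V(x)=\tfrac{k}{2}x^{2}+c$, and integrability of $e^{-V}$ forces $k>0$, so $\mu$ is Gaussian.

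I do not anticipate a serious obstacle: the analytic content has been absorbed entirely into Theorem~\ref{theorem2}, and the remainder is a short functional-equation argument driven by the oddness of $V'$. The only points requiring a moment of care are that both homogeneities $V'(ax)=aV'(x)$ and $V'(by)=bV'(y)$ are simultaneously available (both drop out from setting one variable to zero and applying oddness), and that the map $(x,y)\mapsto(ax,by)$ is a bijection of $\mathbb{R}^{2}$, so subadditivity of $V'$ on all of $\mathbb{R}$ really follows.
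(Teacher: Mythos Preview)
Your argument is correct and follows the same essential idea as the paper: invoke the necessity part of Theorem~\ref{theorem2} to obtain $V'(ax+by)\leq aV'(x)+bV'(y)$, then exploit the oddness of $V'$ to upgrade the inequality to an equality and conclude $V'$ is linear. The paper's execution is slightly more direct: it applies the substitution $(x,y)\mapsto(-x,-y)$ immediately to the full two-variable inequality, obtaining $V'(ax+by)=aV'(x)+bV'(y)$ in one stroke, and then differentiates in $x$ (and sets $ax+by=0$) to force $V''$ constant---your detour through the one-variable homogeneities, subadditivity, and Cauchy's equation reaches the same endpoint but with a few extra moves that could be collapsed.
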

 \begin{proof}
By (\ref{log-prime}) and the fact that $V'$ is an odd function we obtain
\begin{align*}
-V'(ax+by) = V'(-ax-by) \leq aV'(-x)+bV'(-y) = -aV'(x)-bV'(y).
\end{align*}
Therefore $V'(ax+by)=aV'(x)+bV'(y)$ for all $x,y \in \mathbb{R}$. If we take derivative with respect to $x$ we obtain $aV''(ax+by)=aV''(x)$. Choose $y$ so that $ax+by=0$  then we obtain that $V''(x)=V''(0)$ for all $x \in \mathbb{R}$. Thus $V=cx^{2}+d$ if $a+b>1$ and $V=cx^{2}+kx + d$ if $a+b=1$. Further we will just write $V(x) = cx^{2}+k(a+b-1)x+d$ instead of considering previous cases separately. In both cases $c>0$ because $\int e^{-V} <\infty$. 

On the other hand testing the Ehrhard inequality (\ref{borel-result}) with $d\gamma_{1}$ and test functions $\tilde{f}(x)=f(px+q)$, $\tilde{g}(y)=g(py+q)$ we see that after the change of variables the inequality holds for the probability measures $d\mu=e^{-V}dx$ with $V(x)=e^{cx^{2}+k(a+b-1)x+d}$. 
 \end{proof}
 The following remark was pointed out to us by R.~Lata\l a. 
 \begin{remark}If we drop the assumption of smoothness, namely, $d\mu= e^{-V}, V \in C^{2}(\mathbb{R})$ then Corollary~\ref{even-measure} fails. Indeed, consider $d\mu(x) = \mathbbm{1}_{[-1/2,1/2]}(x)dx$. We are thankful to R.~Lata\l a for pointing out this example. 
 \end{remark}
 
 \bigskip 
 
 It turns out that the measures $e^{-V}$ which satisfy $V'(ax+by)\leq aV'(x)+bV'(y)$ for all $x,y \in \mathbb{R}$ and all $a,b>0$ with $a+b \geq 1$ and $|a-b|\leq1$ have a simple geometrical description. 
 \begin{corollary}\label{last3}
 Let $d\mu$ be a probability measure with the density function  $e^{-V}, V \in C^{2}(\mathbb{R})$ and finite absolute fifth moment. Assume that the Ehrhard inequality (\ref{borel-result}) holds for all $a,b >0$ with $a+b \geq 1$ and $|a-b|\leq 1$. Then $\int_{\mathbb{R}} x e^{-V(x)} dx =0$ and   $V'$ is a convex function. 
 Moreover, there exist constants $c_{\pm}>0$  with $c_{-}\leq c_{+}$ such that $\lim_{x \to \pm \infty}|V'(x)-x c_{\pm}|=0$. 
 \end{corollary}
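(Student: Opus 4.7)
The plan is to extract all necessary consequences of the Ehrhard inequality from Theorem~\ref{theorem2}, which gives $V'(ax+by)\le aV'(x)+bV'(y)$ for every admissible pair $(a,b)$. Taking any such pair with $a+b>1$ (for example $a=b=1$) immediately forces $\int x\,d\mu=0$. Taking $a+b=1$ with arbitrary $a\in(0,1)$ is exactly convexity of $V'$. Two further specializations are central: $a=b=s/2$ with $x=y$ yields the scaling $V'(sx)\le sV'(x)$ for every $s\ge1$, while $a=b=1$ yields the subadditivity $V'(x+y)\le V'(x)+V'(y)$. Finally, Taylor-expanding the necessary inequality at $a=1$, $b\to 0^+$ gives the pointwise bound $yV''(x)\le V'(y)$ for all $x,y\in\mathbb{R}$.

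To identify the constants I would set $c_+:=\inf_{y>0}V'(y)/y$ and $c_-:=\sup_{y<0}V'(y)/y$. The pointwise bound yields $c_-\le V''(x)\le c_+$. Convexity of $V'$ makes $V''$ continuous and nondecreasing, and the scaling $V'(sx)\le sV'(x)$ shows $V'(y)/y$ is monotone on each half-line; combining these, $c_\pm=\lim_{y\to\pm\infty}V'(y)/y=\lim_{x\to\pm\infty}V''(x)$. Positivity $c_+>0$ is forced: otherwise $V''\le 0$ and $V$ is concave, contradicting $V\to+\infty$ required by $\int e^{-V}=1$. For $c_->0$, the case $c_-<0$ would make $V(x)\sim\tfrac{c_-}{2}x^2\to-\infty$ at $-\infty$, violating integrability; the borderline $c_-=0$ is excluded by rewriting subadditivity as $V'(w)\ge V'(w+y)-V'(y)=\int_y^{w+y}V''(s)\,ds$ and letting $y\to-\infty$ with $w$ fixed: since $V''$ is uniformly close to $c_-=0$ on the bounded-length interval $[y,w+y]$, the right-hand side vanishes, so $V'\ge 0$ everywhere; but then $V$ is nondecreasing and $e^{-V}$ is bounded below on $(-\infty,0)$, contradicting integrability. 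The ordering $c_-\le c_+$ is automatic from the monotonicity of $V''$.

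The last and most delicate step is the asymptotic $|V'(x)-c_\pm x|\to 0$. Set $F(x)=V'(x)-c_+x$ for $x>0$. The inequality $V'(y)\ge c_+y$ (obtained by dividing $V'(sy)\le sV'(y)$ by $s$ and sending $s\to\infty$) together with the upper bound $V'(y)\le V'(0)+c_+y$ forces $F\in[0,V'(0)]$; nondecreasing $V''$ makes $F$ nonincreasing, converging to some $C_+\in[0,V'(0)]$. The subadditivity and scaling of $V'$ descend to $F(x+y)\le F(x)+F(y)$ and $F(sx)\le sF(x)$, and an analogous $G(x)=V'(x)-c_-x$ on $(-\infty,0)$ is nondecreasing with limit $C_-\in[0,V'(0)]$ at $-\infty$. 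To conclude $C_\pm=0$ one combines these structural constraints on $F$ and $G$ with the zero-mean condition: a positive $C_+$ would insert a linear drift into the expansion $V(x)=\tfrac{c_+}{2}x^2+C_+x+O(1)$ at $+\infty$, and coupling the analogous expansion at $-\infty$ through $\int x\,d\mu=0$ together with the subhomogeneity $F(sx)\le sF(x)$ (which, near the boundary $|a-b|=1$ of the admissible region, becomes tight) forces both linear corrections to vanish simultaneously. This final pinning-down of the leading-order correction, which exploits the full strength of the admissible range in Theorem~\ref{theorem2} and the tail asymptotics of $e^{-V}$, is the principal obstacle of the proof.
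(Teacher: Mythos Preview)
Your approach differs from the paper's. The paper interprets $V'(ax+by)\le aV'(x)+bV'(y)$ geometrically: for any two points on the graph of $V'$, the infinite parallelogram $\{a(x,V'(x))+b(y,V'(y)):a+b\ge1,\ |a-b|\le1\}$ lies in $\mathrm{epi}\,V'$, and from this picture the paper reads off convexity together with the ray condition $s\cdot(x,V'(x))\in\mathrm{epi}\,V'$ for all $s\ge1$. You instead isolate specific admissible pairs to extract convexity ($a+b=1$), the scaling $V'(sx)\le sV'(x)$ ($a=b=s/2$), subadditivity ($a=b=1$), and the pointwise bound $yV''(x)\le V'(y)$ (letting $b\to0^+$ at $a=1$). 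This last observation is not in the paper and gives a clean route to $c_-\le V''\le c_+$; your derivations of convexity, of $c_\pm>0$, and of $c_-\le c_+$ are correct.

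The genuine gap is your final paragraph. You show $F(x)=V'(x)-c_+x$ is nonnegative, nonincreasing and bounded on $[0,\infty)$, hence converges to some $C_+\ge0$, but the argument that $C_+=0$ is only gestured at. The functional inequality by itself does \emph{not} force this: the affine choice $V'(x)=x+1$ satisfies $V'(ax+by)=ax+by+1\le ax+by+(a+b)=aV'(x)+bV'(y)$ for every admissible pair, is convex, and obeys the ray condition, yet $V'(x)-x\equiv1$. What excludes this example in the corollary is that its density $e^{-V}$ is a shifted Gaussian with nonzero mean, so the Ehrhard hypothesis fails for $a+b>1$. You correctly sense that the zero-mean condition must enter, but your proposed mechanism is not an argument: the subhomogeneity $F(sx)\le sF(x)$ is vacuous for a bounded nonnegative $F$ when $s\ge1$, and the tail expansion $V(x)=\tfrac{c_+}{2}x^2+C_+x+o(x)$ combined with $\int x\,d\mu=0$ yields at best one scalar relation among $(C_+,C_-,c_+,c_-)$, not the two equalities $C_\pm=0$. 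The paper's own passage from the geometric picture to $|V'(x)-c_\pm x|\to0$ is equally terse, and the same affine example satisfies the intermediate conditions stated there; so you have located the genuine crux, but as written this step is not proved.
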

 
 \begin{proof}
 By Theorem~\ref{theorem2} we have 
 \begin{align}\label{geometry}
 V'(ax+by) \leq aV'(x)+bV'(y)
 \end{align} 
 for all  real $x,y$ and for all positive numbers $a, b$ with $a+b \geq 1$ and $|a-b| \leq 1$. Inequality (\ref{geometry}) has  the following geometrical meaning.  
 Let $\mathrm{epi}\; V' = \{ (x,y) \in \mathbb{R}^{2}\, : \; y \geq V'(x)\}$ be the epigraph of $V'$. Condition (\ref{geometry}) means that $a (x,V'(x))+b (y,V'(y)) \in \mathrm{epi}\; V'$ for all $a+b \geq 1$ and $|a-b|\leq 1$. It follows that the infinity  parallelogram $P$ (see Figure~\ref{fig:pizza}) with sides 
 \begin{align*}
 &s\cdot (x,V'(x))+(1-s)\cdot (y,V'(y)), \quad s \in [0,1];\\
 &(x,V'(x))+s \cdot ((x,V'(x))+(y,V'(y))), \quad s \in [0,\infty)\\
 &(y,V'(y))+s \cdot ((x,V'(x))+(y,V'(y))), \quad s \in [0,\infty)\\ 
 \end{align*}
 belongs to $\mathrm{epi}\, V'$. Since this is true for all $x,y \in \mathbb{R}$ it follows that this can happen if and only if
  $V'$ is convex and $\mathrm{epi}\, V'$ contains all lines  $L$ of the form $s\cdot (x,V'(x)), \;  s \geq 1$ for all $x \in \mathbb{R}$.  Then it follows that there exist  real numbers $c_{\pm}$, $c_{-} \leq c_{+}$ such that $\lim_{x \to \pm \infty}| V(x) - x c_{\pm}|=0$.  Since $\int e^{-V} dx <\infty$ it follows that there exists sufficiently large $p, q>0$ such that $V'(p)>0$ and $V'(-q)<0$.  This implies that $c_{\pm}>0$. 
  
\begin{figure}
\includegraphics[scale=1]{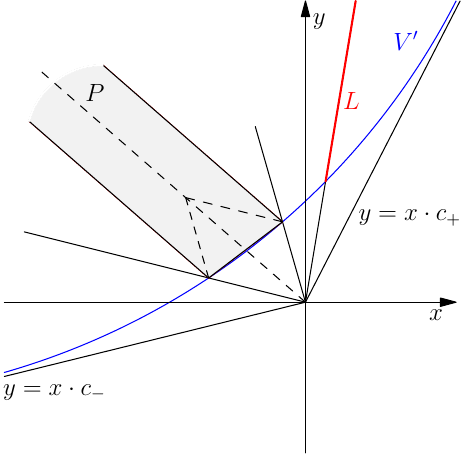}
\caption{Graph of $V'$, parallelogram $P$ and the line $L$}
\label{fig:pizza}
\end{figure}
\end{proof}

One can observe that  $c_{+}=c_{-}$ if and only if $d\mu$ is the Gaussian measure. It would be interesting to see  whether the converse of Corollary~\ref{last3} is also true at least for $a+b=1$, i.e., a probability measure with density $e^{-V}$ and the function $V$ described in Corollary~\ref{last3} satisfies the Ehrhard inequality (\ref{borel-result}) with $n=1$. If this is the case then for such measures we obtain 
$\mu(A_{\varepsilon}) \geq \Phi\left(\Phi^{-1}(\mu(A))+\varepsilon \sqrt{\frac{c_{-}}{c_{+}}}\right)$.

 Next we investigate 1-homogeneous functions which satisfy (\ref{ragaca}). The class of 1-homogeneous functions was studied in a remarkable paper of Borell \cite{Borell2}. One should compare our results of Subsection~\ref{lebeg-hom} with the results of Borell. For the convenience of the reader we have included Borell's theorem in Appendix (see Theorem~B).

  \subsection{Lebesgue measure and 1-homogeneous functions}\label{lebeg-hom}
  In this section we describe all 1-homogeneous functions $H$ which satisfy (\ref{ragaca}). It turns out that they are either convex functions, or the Pr\'ekopa--Leindler type functions (\ref{p-1}), (\ref{p-2}) and (\ref{p-3}). Further we will always  assume that the numbers $a,b>0$ satisfy the constraint  $a+b \geq 1$ and $|a-b|\leq 1$. 
 

\begin{corollary}\label{concavity-gaussian}
Let $H \in C^{3}(\mathrm{int}(\mathbb{R}_{+}^{2}))$ be 1-homogeneous  function with $H_{x}, H_{y} \neq 0$.  Partial differential inequality (\ref{numbers}) holds  on $\mathrm{int}(\mathbb{R}_{+}^{2})$ if and only if one of the following holds: 
 
\begin{align}
& H \quad \text{is a convex function}\, ; \label{p-c}\\
&H(x,y)=Cx^{a}y^{b}, \quad C>0,  \quad b=1-a, \quad a \in (0,1); \label{p-1}\\
&H(x,y)=Cx^{a}y^{-b}, \quad C<0, \quad b=a-1, \quad a \in (1, \infty); \label{p-2}\\
&H(x,y)=Cx^{-a}y^{b}, \quad C<0, \quad b=a+1, \quad a \in (0, \infty). \label{p-3} 
\end{align} 
\end{corollary}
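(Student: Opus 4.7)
The plan is to exploit 1-homogeneity of $H$ to reduce the PDI in (\ref{numbers}) to a pointwise sign condition. Differentiating Euler's identity $xH_x+yH_y=H$ gives $H_{xx}=-\tfrac{y}{x}H_{xy}$ and $H_{yy}=-\tfrac{x}{y}H_{xy}$; substituting these into (\ref{numbers}) and clearing the positive factor $xyH_x^2H_y^2$, I expect the PDI to become
\[
-H_{xy}\cdot\bigl[a^2y^2H_y^2-(1-a^2-b^2)xyH_xH_y+b^2x^2H_x^2\bigr]\ge 0.
\]
Regarded as a quadratic form in $(xH_x,yH_y)$, the bracketed quantity $Q$ has discriminant $(1-a^2-b^2)^2-4a^2b^2=[1-(a+b)^2][1-(a-b)^2]$, which is nonpositive under the standing constraints on $(a,b)$. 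Hence $Q\ge 0$ throughout the orthant, and the PDI is equivalent to the pointwise disjunction: at every point, $H_{xy}\le 0$ or $Q=0$. Since for a 1-homogeneous $H$ the vector $(x,y)$ lies in the kernel of the Hessian and the remaining eigenvalue equals $-H_{xy}(x^2+y^2)/(xy)$, convexity of $H$ on $\mathrm{int}(\mathbb{R}_+^2)$ is exactly the condition $H_{xy}\le 0$ there.

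In the strict regime $|1-a^2-b^2|<2ab$ one has $Q>0$ pointwise, so the PDI forces $H_{xy}\le 0$ and $H$ is convex, giving (\ref{p-c}). The boundary regime $|1-a^2-b^2|=2ab$ splits into $a+b=1$ and $|a-b|=1$, in which $Q$ factors as the perfect square $(ayH_y\mp bxH_x)^2$. Assuming $H$ is not convex, I pick a point where $H_{xy}>0$ and an open neighborhood $V$ where this persists; on $V$ the PDI then forces the linear first-order PDE $ayH_y=\pm bxH_x$. Writing $H(x,y)=y\phi(x/y)$ and combining with this PDE reduces it to a separable ODE for $\phi$ whose only solutions are power functions, producing $H=Cx^ay^b$, $H=Cx^ay^{-b}$, or $H=Cx^{-a}y^b$ on $V$, with the sign of $C$ pinned down by the requirement $H_{xy}>0$. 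Up to a positive rescaling this matches (\ref{p-1}), (\ref{p-2}), (\ref{p-3}) in the three corresponding parameter sub-cases.

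The main obstacle will be promoting the local formula on $V$ to a global one, since a priori $H$ could coincide with a power function on $V$ and be convex outside. My plan is to show that $V=\{H_{xy}>0\}$ is clopen in $\mathrm{int}(\mathbb{R}_+^2)$: if $(x_\ast,y_\ast)\in\partial V$ lies inside the orthant, continuity of $H_{xy}$ forces $H_{xy}(x_\ast,y_\ast)=0$, but $H$ coincides on $V$, and by continuity also at $(x_\ast,y_\ast)$, with the explicit power-function solution, whose mixed partial has constant nonzero sign on the whole open orthant. This contradiction, combined with connectedness of $\mathrm{int}(\mathbb{R}_+^2)$, forces $V$ to be either empty (convex $H$) or all of $\mathrm{int}(\mathbb{R}_+^2)$ (global power-function $H$). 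The converse implications are a direct verification: convex $H$ has $H_{xy}\le 0$ and $Q\ge 0$, while for each of the three explicit families the ODE derivation shows $Q\equiv 0$.
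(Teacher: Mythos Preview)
Your proof is correct and follows essentially the same strategy as the paper: both exploit 1-homogeneity to factor the PDI as (a sign-carrying second-order quantity)$\times$(a nonnegative quadratic in the first derivatives), then analyze when the quadratic degenerates. The paper parametrizes via $H(x,y)=xh(y/x)$ and obtains the factor $h''$ times a quadratic in $(h,th')$, whereas you work directly in $(x,y)$ using the Euler relations $H_{xx}=-\tfrac{y}{x}H_{xy}$, $H_{yy}=-\tfrac{x}{y}H_{xy}$ to get $-H_{xy}\cdot Q$; since $H_{xy}=-\tfrac{t}{x}h''$ these are the same factorization in different coordinates. Your clopen/connectedness argument for globalizing the local power-law solution is a cleaner and more explicit version of what the paper asserts in one line (``none of these concave functions can be glued smoothly with a convex function''), and your observation that $\{H_{xy}>0\}$ is a cone (by $(-1)$-homogeneity of $H_{xy}$) is exactly what is needed to make the single-variable ODE determination of the constant $C$ work on each component.
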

\begin{proof}


Since $H$ is 1-homogeneous we have $H(x,y) = xh(\frac{y}{x})$ for some  $h \in C^{3}(\mathrm{int}(\mathbb{R}_{+}))$. Conditions $H_{x}\neq 0$ and $H_{y} \neq 0$ imply that $h' \neq 0$ and $h(t)-th'(t) \neq 0$. 
 Notice that (\ref{numbers}) takes the following form
  \begin{align}\label{pde1}
  a^{2} \frac{H_{xx}}{H_{x}^{2}}+(1-a^{2}-b^{2})\frac{H_{xy}}{H_{x} H_{y}}+b^{2} \frac{H_{yy}}{H_{y}^{2}} = 
   \frac{(b|h|-|h'|t)^{2}+|h||h'|t(2b+\mathrm{sign}(hh')(a^{2}-b^{2}-1))}{x(h-th')^{2} (h')^{2}}\cdot h''
  \end{align}
where $h=h(t)$ and $t=y/x$. Notice that $(2b \pm (a^{2}-b^{2}-1)) \geq 0$. We have 
\begin{align}\label{diffur1}
(b|h|-|h'|t)^{2}+|h||h'|t(2b+\mathrm{sign}(hh')(a^{2}-b^{2}-1)) \geq 0.
\end{align}
Thus, if $h''(t)\geq 0 $ for all $t>0$ (i.e., condition (\ref{p-c}) holds), or one of the conditions among (\ref{p-1}), (\ref{p-2}) and (\ref{p-3}) hold then clearly the right hand side of  (\ref{pde1}) is nonnegative., i.e., (\ref{numbers}) holds.   Now let us show the converse. 

Assume the right hand side of (\ref{pde1}) is nonnegative. If $h''(t)\geq 0$ for all $t>0$ then $H$ satisfies (\ref{p-c}). Therefore without loss of generality assume that on some interval $I\subset (0, \infty)$ we have $h''<0$.  Thus the right hand side of (\ref{pde1}) is nonnegative on $I$ if and only if the left hand side of (\ref{diffur1}) is zero. This can happen if and only if  $b|h|=|h'|t$ and $|h||h'|t(2b+\mathrm{sign}(hh')(a^{2}-b^{2}-1))=0$. 
Also notice that  if $b|h|=|h'|t$ then $h\neq 0$  on $I$ (since $h'\neq 0$). We consider several cases. 

Suppose $h' t = bh$ on $I$. Then $h=Ct^{b}$ on $I$ for some nonzero $C \in \mathbb{R}$.  Then $\mathrm{sign}(hh')=1$ on $I$, and we obtain that 
$2b+(a^{2}-b^{2}-1)=0$. The last equality implies that either $a+b=1$ or $b-a=1$. Thus we obtain that  $h(t) = Ct^{b}$ on $I$ for some nonzero $C$ with  $a+b=1$ or $b-a=1$.

Suppose $h' t = -bh$ on $I$. Then $h=Ct^{-b}$ on $I$ for some nonzero $C \in \mathbb{R}$. Then $\mathrm{sign}(hh')=-1$ on $I$ and we obtain that 
$2b-(a^{2}-b^{2}-1)=0$. The last equality can happen if and only if $a-b=1$. Thus $h(t)=Ct^{-b}$ on $I$  with some nonzero $C$ and $a-b=1$. 

Thus if $h''<0$ and, thereby, the left hand side of (\ref{diffur1}) is zero on some interval $I$ then the several cases might happen: 1) $a+b=1$ and $h(t)=Ct^{b}$, $C>0$;  2) $b-a=1$ and $h(t)=Ct^{b}$, $C<0$ ; 3) $a-b=1$ and $h=Ct^{-b}$, $C<0$. 
 Notice that non of these strictly concave functions can be glued $C^{2}$ smoothly with a convex function. It follows that $I=(0, \infty)$ (otherwise choose the maximal interval $I$ and consider the value $h''$ at the endpoints of $I$). Thus (\ref{pde1}) is nonnegative if and only if either $h$ is a convex function, or $h$ is a concave function of the form
\begin{align}
&h(t)=Ct^{b}, \quad C>0, \quad a+b=1 \quad \text{and} \quad H(x,y)=Cx^{a}y^{b};\label{ff-1}\\
&h(t) =Ct^{b}, \quad C<0, \quad  b-a=1 \quad \text{and} \quad H(x,y)=Cx^{-a}y^{b};\label{ff-2}\\
&h(t) = Ct^{-b}, \quad C<0, \quad a-b=1 \quad \text{and} \quad H(x,y)=Cx^{a}y^{-b}.\label{ff-3}
\end{align}



\end{proof}

So,  in case of  smooth 1-homogeneous functions there are two instances: $H$ is convex, or $H$ coincides with one of the functions (\ref{p-1}), (\ref{p-2}) and (\ref{p-3}).  Next we describe measures $d\mu$ which satisfy  (\ref{ragaca}) for 1-homogeneous functions $H$. We consider the case when  $H$ is a function of the form (\ref{ff-1}), (\ref{ff-2}) and (\ref{ff-3}).  
\subsubsection{Case of the Pr\'ekopa-Leindler functions}

Functions found in (\ref{ff-1}), (\ref{ff-2}) and (\ref{ff-3}) provide us with the following inequalities.

\begin{corollary}\label{character}
Let $d\mu$ be the Gaussian measure (or the Lebesgue measure). We have 
\begin{align}
\int_{\mathbb{R}^{n}}\esssup_{y \in \mathbb{R}^{n}}\; f^{a}\left(\frac{x-y}{a}\right) g^{1-a}\left( \frac{y}{1-a}\right) d\mu(x) \geq \left( \int f d\mu \right)^{a} \left( \int g d\mu \right)^{1-a}, \quad a \in (0,1) \label{prekopa-1}
\end{align}
for all nonnegative Borel measurable $f,g \in L^{1}(d\mu)$. 
Moreover, if $d\mu$ is even then
\begin{align}
\int_{\mathbb{R}^{n}}\essinf_{y \in \mathbb{R}^{n}}\; f^{a}\left(\frac{x-y}{a}\right) g^{1-a}\left( \frac{y}{1-a}\right) d\mu(x) \leq \left( \int f d\mu \right)^{a} \left( \int g d\mu \right)^{1-a}, \quad a \in (-\infty,0) \cup (1,\infty) \label{prekopa-2}
\end{align}
 for all bounded compactly supported  nonnegative Borel measurable $f,g$ with positive $\int g d\mu$ and  $\int f d\mu $.   
\end{corollary}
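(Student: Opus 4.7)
The approach is a direct application of Theorem~\ref{mth1} to the Pr\'ekopa--Leindler type functions identified in Corollary~\ref{concavity-gaussian}, combined with the two-sided truncation recipe outlined at the start of Section~\ref{app}.

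For the first inequality (\ref{prekopa-1}), I would take $H(x,y) = x^{a} y^{1-a}$ on a compact subrectangle of $\mathrm{int}(\mathbb{R}_{+}^{2})$ and apply (\ref{ragaca}) with parameters $(a_{1}, b_{1}) = (a, 1-a)$. A short computation gives
\[
\frac{H_{xx}}{H_{x}^{2}} = \frac{a-1}{aH}, \qquad \frac{H_{yy}}{H_{y}^{2}} = -\frac{a}{(1-a)H}, \qquad \frac{H_{xy}}{H_{x} H_{y}} = \frac{1}{H},
\]
and the three terms of the PDI in (\ref{numbers}) telescope to zero, so the PDI is an identity. Since $a_{1} + b_{1} = 1$, no mean-zero hypothesis is required. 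The smoothness obstruction at the coordinate axes is handled via the affine rescaling $H_{\varepsilon, \delta}(x,y) = H(\varepsilon + \delta x, \varepsilon + \delta y)$ on $[0,1]^{2}$ described in Section~\ref{app}; Theorem~\ref{mth1} then applies to bounded $f, g$ with image in $[\varepsilon, \varepsilon + \delta]$. Letting $\varepsilon \to 0$ and $\delta \to \infty$, monotonicity of $H$ in each variable together with monotone/dominated convergence on the outer $d\mu$-integral extends the inequality to all nonnegative $f, g \in L^{1}(d\mu)$; the argument runs in parallel to the proof of Theorem~\ref{theorem2}.

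For the reverse inequality (\ref{prekopa-2}), I would apply the same strategy to the concave function $\tilde{H}(x,y) = -x^{a} y^{1-a}$ coming from (\ref{p-2}) when $a \in (1, \infty)$ and from (\ref{p-3}) when $a \in (-\infty, 0)$, with appropriate positive parameters $(a_{1}, b_{1})$ in (\ref{ragaca}) (namely $(a, a-1)$ and $(-a, 1-a)$ respectively, the second possibly combined with the substitution $y \mapsto -y$ inside the essential infimum). The three ratios displayed above reverse sign under $H \mapsto -H$, but the same cancellation forces the PDI for $\tilde{H}$ to be an identity, and in both cases $|1 - a_{1}^{2} - b_{1}^{2}| = 2 a_{1} b_{1}$, placing us in the parabolic regime. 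Since now $a_{1} + b_{1} > 1$, the condition $\int x \, d\mu = 0$ from (\ref{numbers}) is required, and this is exactly what evenness of $d\mu$ supplies. The identity $\esssup_{y}(-H(f,g)) = -\essinf_{y} H(f,g)$ then converts (\ref{ragaca}) for $\tilde{H}$ into (\ref{prekopa-2}); boundedness and compact support of $f, g$ with positive integrals keep every quantity finite, so the approximation step is routine.

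The main technical point is the interchange of the essential supremum (or infimum) with the two-sided truncation of $f, g$ and the smoothing of $H$. This is handled exactly as in the proof of Theorem~\ref{theorem2}, exploiting monotonicity of $H$ in each argument together with the Lebesgue dominated convergence theorem; I expect no additional obstacles beyond bookkeeping.
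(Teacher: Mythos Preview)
Your approach is essentially the same as the paper's: apply Theorem~\ref{mth1} to the Pr\'ekopa--Leindler functions (\ref{p-1}), (\ref{p-2}), (\ref{p-3}), check that the PDI in (\ref{numbers}) degenerates to an identity (so the parabolic constraint $|a_{1}-b_{1}|=1$ or $a_{1}+b_{1}=1$ is forced), and remove the smoothness obstruction at the boundary of $\mathbb{R}_{+}^{2}$ by the shift/dilate trick described at the start of Section~\ref{app}. The paper uses the slightly simpler shift $H_{\varepsilon}(x,y)=H(x+\varepsilon,y+\delta)$ rather than your affine $H(\varepsilon+\delta x,\varepsilon+\delta y)$, but this is cosmetic.

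One point you do not address: the statement also covers the Lebesgue measure. The paper handles this in one extra line by exploiting $1$-homogeneity of $H$: test the Gaussian inequality on $f_{\lambda}(x)=f(\lambda x)$, $g_{\lambda}(x)=g(\lambda x)$, change variables, and send $\lambda\to\infty$; the Gaussian weight flattens to Lebesgue and the powers of $\lambda$ cancel because $H$ is $1$-homogeneous. Your write-up should include this step, since Theorem~\ref{mth1} itself says nothing about non-probability measures.
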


\begin{proof}
Inequalities in the corollary follow from the application of Theorem~\ref{mth1} to the functions (\ref{p-1}), (\ref{p-2}) and (\ref{p-3}). The only obstacle to directly apply Theorem~\ref{mth1} is that $H \notin C^{3}(\mathbb{R}^{2}_{+})$.  To avoid this obstacle one needs to consider an auxiliary function $H_{\varepsilon}(x,y)=H(x+\varepsilon,y+\delta)$ for $\varepsilon, \delta>0$ and then send $\varepsilon,  \delta \to 0$ (see the similar discussions in (\ref{auxiliary})).  

Case of the Lebesgue measure follow from the Gaussain measure and the fact that $H$ is 1-homogeneous. Indeed, we can test inequalities in the corollary for the  following test functions $f_{\lambda}(x)=f(\lambda x)$ and $g_{\lambda}(x)=g(\lambda x)$. By making  change of variables and using 1-homogeneity of $H$  we can  send $\lambda \to \infty$ and obtain the desired result. 
\end{proof}

 Inequality (\ref{prekopa-1}) is the classical Pr\'ekopa--Leindler inequality \cite{prekopa-and-leindler1, prekopa-and-leindler2}.  Among its many applications we should mention a remarkable paper \cite{bo-3}.
Stability of (\ref{prekopa-1}) was studied in \cite{bkb}.  
  The inequality implies that the marginals of log-concave measures are log-concave. For a local version of the latter fact we refer the reader to  \cite{bbn}  (see also \cite{dce} for the complex setting).  An extension of the inequality was obtained in \cite{dce-bm}.

 Inequality (\ref{prekopa-2}) can be understood as an extension of the classical Pr\'ekopa--Leindler inequality for $a \notin [0,1]$. In fact,   one can show that (\ref{prekopa-1}) and (\ref{prekopa-2})  are equivalent  if instead of essential infimum in (\ref{prekopa-2}) we would have only infimum.

  It is the remarkable result of Borell \cite{Borell2} that (\ref{prekopa-1}) holds if and only if $d\mu$ has a density woth respect to the Lebesgue measure on some affine hyperplane, and this density is logarithmically concave function.  One can show that the weaker version of (\ref{prekopa-2}), i.e., when essential infimum is replaced by infimum, also holds for even log-concave measures. 
  
  Finally we would like to mention that even though among $C^{3}$ smooth 1-homogenous functions $H$ with nonvanishing $H_{x}$ and $H_{y}$ there are only two instances either $H$ is convex or $H$ is of the form (\ref{p-1}), (\ref{p-2}) and (\ref{p-3}),  it is not the case in general if we drop the assumption of smoothness. We can always take  the maximum of any two functions which satisfy (\ref{cl-max}). Indeed, next proposition says that (\ref{ragaca}) is closed under taking maximum. 
\begin{proposition}\label{cl-max}
If $H_{1}$ and $H_{2}$ satisfy (\ref{ragaca})  then $H=\max\{H_{1},H_{2}\}$ also satisfies (\ref{ragaca}). 
\end{proposition}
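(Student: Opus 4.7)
The plan is to prove this by a completely elementary monotonicity argument; the result is really just a consequence of the fact that both the essential supremum in $y$ and the integral against $d\mu$ preserve pointwise inequalities. Fix Borel measurable $(f,g) : \mathbb{R}^n \times \mathbb{R}^n \to \Omega$ and set $H = \max\{H_1, H_2\}$. Since $H \geq H_i$ pointwise on $\Omega$ for $i = 1, 2$, we have for each $x \in \mathbb{R}^n$ the pointwise-in-$y$ inequality
\[
H\!\left(f\!\left(\tfrac{x-y}{a}\right),\, g\!\left(\tfrac{y}{b}\right)\right) \;\geq\; H_i\!\left(f\!\left(\tfrac{x-y}{a}\right),\, g\!\left(\tfrac{y}{b}\right)\right),
\]
and therefore the same inequality holds after taking $\esssup_y$ on both sides.

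Integrating against $d\mu(x)$ and invoking the hypothesis that both $H_1$ and $H_2$ satisfy (\ref{ragaca}) with the same parameters $a, b, \mu$, we obtain
\[
\int_{\mathbb{R}^n} \esssup_{y \in \mathbb{R}^n} H\!\left(f\!\left(\tfrac{x-y}{a}\right),\, g\!\left(\tfrac{y}{b}\right)\right) d\mu(x) \;\geq\; H_i\!\left(\int f\, d\mu,\, \int g\, d\mu\right)
\]
for $i = 1, 2$. Taking the maximum of the two right-hand sides yields the desired conclusion, because $\max\{H_1, H_2\}(u, v) = H(u, v)$ at the specific point $(u, v) = \left(\int f\, d\mu,\, \int g\, d\mu\right) \in \Omega$ (this point lies in $\Omega$ since $\Omega$ is convex and $(f, g)$ takes values in $\Omega$).

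There is essentially no obstacle here: the argument is a two-line consequence of monotonicity of $\esssup$ and of integration. The only things worth flagging are bookkeeping items, namely that $H_1$ and $H_2$ must be defined on a common domain $\Omega$ containing the image of $(f,g)$ so that $H = \max\{H_1, H_2\}$ makes sense, and that even if $H$ fails to be in $C^3(\Omega)$ (which is the setting in Theorem~\ref{mth1}), the statement of (\ref{ragaca}) itself only requires enough regularity for the integral on the left to be defined, so closure under $\max$ does not force us to revisit any smoothness hypotheses.
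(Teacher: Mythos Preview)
Your proof is correct and essentially identical to the paper's: the paper assumes without loss of generality that $H(\int f,\int g)=H_{1}(\int f,\int g)$ and then runs the single chain $\int \esssup H \geq \int \esssup H_{1} \geq H_{1}(\int f,\int g)=H(\int f,\int g)$, whereas you do the same chain for both $i$ and take the maximum at the end. The underlying idea---pointwise domination $H\geq H_{i}$ passes through $\esssup$ and through the integral---is exactly the same.
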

\begin{proof}
Indeed, suppose $H\left( \int f, \int g \right) =  H_{1}\left( \int f, \int g \right)$ then since $H \geq H_{1}$ we have 
\begin{align*}
\int \esssup_{y}\; H \left( f \left( \frac{x-y}{a}\right), g\left(\frac{y}{b}\right) \right) d\mu(x) \geq \int \esssup_{y}\; H_{1} \left( f \left( \frac{x-y}{a}\right), g\left(\frac{y}{b}\right) \right) d\mu(x) \geq H_{1}\left( \int f d\mu, \int g d\mu \right).
\end{align*}
\end{proof}

 \section{Appendix}

 
 
 The following remarkable result belongs to Borell \cite{Borell2}. 
  \begin{theoremB}\label{borel-th}
  Let $\varphi : \mathbb{R}^{n} \times \mathbb{R}^{n}  \to \mathbb{R}^{n}$ be a continuously differentiable function such that 
  \begin{align*}
 &\varphi = (\varphi^{1}, \ldots, \varphi^{n});\\
 &\varphi^{k}(x_{1}, x_{2})=\varphi^{k}(x_{1}^{k}, x_{2}^{k}),\quad  x_{i}=(x_{i}^{1},\ldots, x_{i}^{n})\quad \text{for} \quad  i=1,2; \quad  k=1,\ldots, n; \\
 &\frac{\partial \varphi^{k}}{\partial x_{i}^{k}} >0, \quad i=1,2, \quad k=1,\ldots, n. 
  \end{align*}
 Let $f, g, h \geq 0$ and $f,g,h  \in L^{1}_{loc}(\mathbb{R}^{n})$. Further  suppose $\Phi : [0,\infty)\times [0,\infty)\to [0,\infty]$ is a continuous 1-homogeneous function and increasing in each variable. Then the inequality 
 \begin{align}\label{bo-je}
 \int^{*}_{\mathbb{R}^{n}} h \mathbbm{1}_{\varphi(A,B)} dm \geq \Phi \left( \int_{\mathbb{R}^{n}}^{*} f \mathbbm{1}_{A} dm, \int_{\mathbb{R}^{n}}^{*} g \mathbbm{1}_{B} dm  \right)
 \end{align}
 holds for all nonempty $A, B \subset \mathbb{R}^{n}$ if and only if there are sets $\Omega_{1}, \Omega_{2} \subset \mathbb{R}^{n},$ $m(\mathbb{R}^{n}\setminus \Omega_{1})= m(\mathbb{R}^{n}\setminus \Omega_{2} )=0$ such that 
  \begin{align}\label{borell-integral}
  h(\varphi(x,y)) \prod_{k=1}^{n} \left(\frac{\partial \varphi_{k}}{\partial x_{k}} \rho_{k} + \frac{\partial \varphi_{k}}{\partial y_{k}}\eta_{k} \right) \geq \Phi\left(f(x) \prod_{k=1}^{n}\rho_{k},\;  g(y) \prod_{k=1}^{n}\eta_{k} \right)
  \end{align}
  for all $x \in \Omega_{1}$,  $y \in \Omega_{2}$, $\rho_{1},\ldots,\rho_{n}>0$ and for every $\eta_{1},\ldots, \eta_{n}>0$.  Moreover if (\ref{bo-je}) is holds then  $\Omega_{1} = \mathrm{supp}\, f$ and $\Omega_{2} = \mathrm{supp}\, g$ will do. 
   \end{theoremB}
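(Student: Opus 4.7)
The plan is to prove both directions by localization plus a coordinate-by-coordinate transport argument, exploiting the crucial diagonal structure of $\varphi$ (namely that $\partial\varphi^i/\partial x^j=0$ for $j\neq i$, and similarly in $y$).

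\emph{Necessity.} Fix positive numbers $\rho_1,\ldots,\rho_n,\eta_1,\ldots,\eta_n$ and Lebesgue points $x_0\in\mathrm{supp}\,f$, $y_0\in\mathrm{supp}\,g$. Test (\ref{bo-je}) on the shrinking coordinate boxes $A_\varepsilon=x_0+\prod_k[0,\rho_k\varepsilon]$ and $B_\varepsilon=y_0+\prod_k[0,\eta_k\varepsilon]$. Because the $k$-th component of $\varphi$ depends only on the $k$-th coordinates of its inputs, $\varphi(A_\varepsilon,B_\varepsilon)$ is again a coordinate box whose $k$-th side has length $\bigl(\partial_{x^k}\varphi^k(x_0^k,y_0^k)\rho_k+\partial_{y^k}\varphi^k(x_0^k,y_0^k)\eta_k\bigr)\varepsilon+o(\varepsilon)$ by the mean-value theorem. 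Lebesgue differentiation gives $\int^* f\mathbbm{1}_{A_\varepsilon}dm=f(x_0)\prod_k\rho_k\varepsilon^n+o(\varepsilon^n)$ and analogously for the other two upper integrals. Substituting into (\ref{bo-je}), using $1$-homogeneity of $\Phi$ to extract a factor $\varepsilon^n$ on the right, and letting $\varepsilon\to0$ yields (\ref{borell-integral}) at almost every $(x_0,y_0)$, for every fixed rational tuple $(\rho_k,\eta_k)$; continuity of both sides in $(\rho,\eta)$ then upgrades this to every positive tuple. Taking $\Omega_1=\mathrm{supp}\,f$, $\Omega_2=\mathrm{supp}\,g$ modulo null sets suffices.

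\emph{Sufficiency: the transport construction.} For the converse, set $F=\int^* f\mathbbm{1}_A\,dm>0$, $G=\int^* g\mathbbm{1}_B\,dm>0$ and choose Knothe--Rosenblatt-type transport maps $T\colon[0,1]^n\to A$, $S\colon[0,1]^n\to B$ such that $T^i$ and $S^i$ depend only on $(t^1,\ldots,t^i)$ (triangular structure), are strictly increasing in $t^i$, and satisfy
\begin{equation*}
f(T(t))\,\det DT(t)=F, \qquad g(S(t))\,\det DS(t)=G \quad \text{a.e.~on } [0,1]^n.
\end{equation*}
These are built by the standard coordinate-wise cumulative-distribution construction applied to the normalised measures $f\mathbbm{1}_A dm/F$ and $g\mathbbm{1}_B dm/G$. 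The diagonality of $\varphi$ combines with the triangularity of $T,S$ to make the composite $U(t):=\varphi(T(t),S(t))$ itself triangular: $U^i$ depends only on $(t^1,\ldots,t^i)$ and
\begin{equation*}
\det DU(t)=\prod_{k=1}^{n}\bigl(\partial_{x^k}\varphi^k\cdot\partial_{t^k}T^k+\partial_{y^k}\varphi^k\cdot\partial_{t^k}S^k\bigr)(t).
\end{equation*}

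\emph{Sufficiency: the integration.} Applying the pointwise hypothesis (\ref{borell-integral}) at the point $(T(t),S(t))$ with $\rho_k=\partial_{t^k}T^k(t)>0$ and $\eta_k=\partial_{t^k}S^k(t)>0$, then integrating over $[0,1]^n$, produces
\begin{equation*}
\int_{[0,1]^n}\! h(U(t))\,\det DU(t)\,dt \;\geq\; \int_{[0,1]^n}\!\Phi(F,G)\,dt \;=\;\Phi(F,G),
\end{equation*}
where the cumulative-distribution identities collapsed the right-hand integrand to the constant $\Phi(F,G)$ by $1$-homogeneity of $\Phi$. Since $U$ is injective (strict monotonicity of $\varphi$ in each variable combined with the triangular structure of $T,S$) and maps into $\varphi(A,B)$, the standard change-of-variables formula shows that the left-hand side equals $\int_{U([0,1]^n)}h\,dm\leq\int^* h\mathbbm{1}_{\varphi(A,B)}dm$, which is (\ref{bo-je}).

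\emph{Main obstacle.} The delicate points are all in the sufficiency direction: constructing the Knothe--Rosenblatt maps under the minimal regularity $f,g\in L^{1}_{loc}$ with $A,B$ merely Borel (where transport densities may blow up on zero sets of $f,g$); justifying injectivity of $U$ and the change-of-variables formula in the presence of upper integrals (recall that $\varphi(A,B)$ need not itself be measurable, which is precisely why the statement uses $\int^*$); and transferring the almost-everywhere validity of (\ref{borell-integral}) on $\Omega_1\times\Omega_2$ to its validity along the $n$-parameter transport curve $\{(T(t),S(t)):t\in[0,1]^n\}$, which requires Fubini together with absolute continuity of the push-forward. These are standard measure-theoretic manoeuvres but must be assembled carefully to avoid circularity.
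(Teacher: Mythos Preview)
The paper does not prove Theorem~B at all; it is quoted in the Appendix as a result of Borell \cite{Borell2}, with only the remark that ``Borell obtained the theorem in more general case when one can include arbitrary number of test functions.'' There is therefore no proof in the paper to compare your attempt against.

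That said, your outline is the classical argument and is sound in its main ideas. Necessity via localization on shrinking coordinate boxes is standard; sufficiency via Knothe--Rosenblatt (triangular) transport is exactly how results of this type are proved, and you have correctly identified that the diagonal structure of $\varphi$ is what makes the composite $U=\varphi(T,S)$ triangular so that $\det DU$ factors coordinatewise.

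A few points beyond those you flag deserve care. In the necessity direction you need $\varphi(x_0,y_0)$ to be a Lebesgue point of $h$, not merely $x_0,y_0$ Lebesgue points of $f,g$; this follows because for fixed $y_0$ the map $x\mapsto\varphi(x,y_0)$ is a coordinatewise diffeomorphism, so preimages of null sets are null, and then Fubini applies. More substantively, the conclusion demands a \emph{product} exceptional set $\Omega_1\times\Omega_2$, whereas your argument naturally yields a full-measure set in $\mathbb{R}^{2n}$; extracting a product set requires an additional step (and is where the last clause about $\mathrm{supp}\,f$, $\mathrm{supp}\,g$ enters). In the sufficiency direction you wrote ``$A,B$ merely Borel,'' but the statement allows arbitrary nonempty $A,B\subset\mathbb{R}^n$ --- this is precisely why upper integrals appear --- so one must first pass to measurable hulls and then check that the inequality on the left-hand side survives in the right direction. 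These are the genuine technicalities; your skeleton is correct.
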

   
    Borell obtained the theorem in more general case when one can include arbitrary number of  test functions and $\varphi$ can be  defined only on some subdomains of $\mathbb{R}^{n}$.

  Let us consider a particular case when $n=1$. Since in the current  paper we are interested  when the inequalities of the form $h(ax+by) \geq H(f(x),g(y))$ imply its integral version $\int h \geq H(\int f, \int g)$ then in order to apply Borell's result we should take $\varphi(x,y)=ax+by$ for $x,y \geq 0$. Then (\ref{borell-integral}) takes the following form $h(ax+by) (a\rho+b \eta) \geq \Phi(f(x)\rho, g(y) \eta)$. 
  The form (\ref{bo-je}) reduces to the form (\ref{ragaca}) if  $h(t) = \sup_{ax+by=t} \Phi(f(x)\mathbbm{1}_{A},g(y)\mathbbm{1}_{B})$ and $h$ is supported on $\mathbbm{1}_{aA+bB}$. This may happen if and only if $\Phi(0,0)=\Phi(0,1)=\Phi(1,0)=0$. Since $A$ and $B$ is arbitrary we obtain  that  $h(ax+by)=\Phi(f(x),g(y))$. Therefore the last condition takes the form 
  \begin{align}\label{borell-integral2}
  \Phi(f(x),g(y)) (a\rho+b\eta) \geq \Phi(f(x)\rho, g(y) \eta).
  \end{align}
  Thus if (\ref{borell-integral2}) holds for all $\eta, \rho >0$ and all nonnegative $f(x), g(y)$ then we obtain  the integral inequality 
  \begin{align}\label{integral-version}
  \int_{\mathbb{R}}^{*} \sup_{ax+by=t} \Phi(f(x),g(y)) dt \geq \Phi\left(\int_{\mathbb{R}} f dx, \int_{\mathbb{R}} g dx \right) \quad \text{for all nonnegative} \quad f,g \in L^{1}.
  \end{align}
 Since $\sup_{ax+by=t} \Phi(f(x),g(y))$ may not be measurable we should understand the integral in the left hand side of (\ref{integral-version}) as an upper integral. 
 \begin{proposition}
 Let $\Phi \in C^{1}(\mathrm{int}(\mathbb{R}^{2}_{+}))\cap C(\mathbb{R}^{2}_{+})$ be 1-homogeneous,  nonnegative and   increasing in each variable. Assume $\Phi(0,0)=\Phi(0,1)=\Phi(1,0)=0$. If $\Phi$  satisfies (\ref{borell-integral2}) for all positive $\rho, \eta, f(x), g(y)$, and with some positive $a,b$ such that $a+b=1$ then $\Phi(x,y)=Cx^{a}y^{b}$.
 \end{proposition}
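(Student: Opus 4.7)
The starting point is to notice that inequality \eqref{borell-integral2} must hold for arbitrary positive values of $f(x)$ and $g(y)$ independently, so it is equivalent to the pointwise statement
\begin{align*}
\Phi(u,v)(a\rho + b\eta) \;\geq\; \Phi(u\rho,\,v\eta) \qquad \text{for all } u,v,\rho,\eta>0.
\end{align*}
Using the 1-homogeneity of $\Phi$, I would divide by $\rho$ and set $\lambda=\eta/\rho$ to reduce this to the one-parameter family
\begin{align*}
\Phi(u, v\lambda) \;\leq\; (a+b\lambda)\,\Phi(u,v) \qquad \text{for all } u,v,\lambda>0.
\end{align*}
A preliminary trivial-case check: if $\Phi\equiv 0$ we are done with $C=0$, otherwise the combination of 1-homogeneity with monotonicity in each variable forces $\Phi>0$ throughout $\mathrm{int}(\mathbb R^2_+)$.

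Next I would introduce the ``profile'' $h(t):=\Phi(1,t)$, which is continuous on $[0,\infty)$, $C^1$ on $(0,\infty)$, strictly positive on $(0,\infty)$, and satisfies $h(0)=0$. Setting $u=1,\,v=s$ in the reduced inequality gives the multiplicative functional inequality
\begin{align*}
h(st) \;\leq\; (a+bt)\,h(s) \qquad \text{for all } s,t>0,
\end{align*}
which one checks is saturated by $h(t)=Ct^{b}$ (the saturation is precisely the weighted AM-GM $a\cdot 1 + b\cdot t\ge t^b$ valid because $a+b=1$).

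To show these are the only solutions, I would pass to logarithmic variables: put $H(X):=\log h(e^X)$ and $F(Y):=\log(a+be^Y)$, turning the inequality into
\begin{align*}
H(X+Y)-H(X) \;\leq\; F(Y) \qquad \text{for all } X,Y\in\mathbb R.
\end{align*}
Since $F(0)=0$ and $F'(0)=b$, dividing by $Y>0$ and letting $Y\downarrow 0$ gives $H'(X)\le b$, while applying the same inequality with $Y<0$ (and using $F'(0)=b$ from the left) gives $H'(X)\ge b$. Hence $H'\equiv b$, so $H(X)=bX+\mathrm{const}$ and $h(t)=Ct^b$ for some $C>0$. Reconstructing $\Phi$ from $h$ via 1-homogeneity,
\begin{align*}
\Phi(u,v)=u\,h(v/u)=C\,u^{1-b}v^{b}=C\,u^{a}v^{b},
\end{align*}
completes the proof.

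The main obstacle I foresee is the differentiability step: one needs to pass from the one-sided difference inequality to a two-sided conclusion $H'\equiv b$. The argument above relies on $H\in C^1$, which comes for free from $\Phi\in C^1(\mathrm{int}(\mathbb R^2_+))$ and $h>0$; without the $C^1$ hypothesis the right replacement would be the Dini derivatives and an elementary convexity argument, but the stated regularity of $\Phi$ makes this step routine. The minor bookkeeping to verify $h>0$ on $(0,\infty)$ (which legitimizes taking logarithms) is handled by combining monotonicity with 1-homogeneity as indicated above.
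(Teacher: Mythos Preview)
Your proof is correct and follows essentially the same route as the paper: reduce to the single-variable profile $h(t)=\Phi(1,t)$ via $1$-homogeneity, obtain the multiplicative inequality $h(st)\le (a+bt)h(s)$, and then linearize at $t=1$ using the $C^{1}$ hypothesis to force $h'(t)t=bh(t)$, hence $h(t)=Ct^{b}$. The only cosmetic difference is that the paper linearizes directly by writing $u=v+\varepsilon$ and Taylor-expanding, whereas you first pass to logarithmic variables $H(X)=\log h(e^{X})$ and differentiate $F(Y)=\log(a+be^{Y})$ at $Y=0$; both arrive at the same ODE.
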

  
  \begin{proof}
$\Phi$ is one homogeneous therefore  $\Phi(p,q)=p\, m\left( \frac{q}{p}\right)$ for some nonnegative increasing function $m \in C^{1}(0,\infty)$.  (\ref{borell-integral2}) simplifies to $m(s)(a+bt) \geq m(st)$ for all $s,t > 0$. Let $st=u$ and $s=v$ then we obtain $m(v)(av+bu) \geq m(u) v$. Set $ u  = v+ \varepsilon$. Then by Taylor's expansion we obtain that for sufficiently small $\varepsilon$ we have  $m(v) v + m(v) b \varepsilon \geq v m(v) + v m'(v)\varepsilon + o(\varepsilon)$. Since $\varepsilon$ can be negative as well we obtain $b m(v) = v m'(v)$ and hence $m(v) = Cv^{b}$ for some $C>0$. Therefore $\Phi(p,q)=Cp^{a}q^{b}$. 
  \end{proof}
  
  Thus the corollary shows that  in the particular case $\varphi = ax+by$ the functions which satisfy the assumption of Borell's theorem (\ref{borell-integral2}) and hence would give us integral inequality (\ref{integral-version}) are of the form $\Phi(x,y) = x^{a} y^{b}$. The reader can recognize that this is the instance of the  Pr\'ekopa--Leindler inequality.  Notice that this also confirms our result: in Subsection~\ref{lebeg-hom} we have found that $\Phi$ has to be convex function or $\Phi$ has to be function of the form (\ref{p-1}), (\ref{p-2}) and (\ref{p-3}). Since  in application of Borell's theorem we require that  $\Phi(0,0)=\Phi(0,1)=\Phi(1,0)$, and $\Phi \geq 0$ then the only possibility is $\Phi(x,y)=x^{a}y^{b}$. Indeed,  1-homogeneous convex nonnegative function $\Phi(x,y)$ on $\mathbb{R}^{2}_{+}$  with values zero at the points $(0,0)$, $(0,1)$ and $(1,0)$ must be identically zero. 
 
 \subsection{Sketch of the proof of Theorem~A}\label{atrakebs}
Without loss of generality we can assume that $C$ is identity matrix. Indeed, we can  denote $\tilde{A}_{i} := C^{1/2}A_{i}$ for $i=1,2,3$, and $\tilde{A} := (\tilde{A}_{1}, \tilde{A}_{2}, \tilde{A}_{3})$, and make change of variables $\tilde{x}:=xC^{-1/2}$ in the left hand side of (\ref{ivo-2}). Thus, it is enough to show that  $A^{*} A \bullet \mathrm{Hess}\, B \leq  0$ if and only if 
\begin{align}
&\int_{\mathbb{R}^{k}} B(u_{1}(xA_{1}), u_{2}(xA_{2}), u_{3}(xA_{3}))d\gamma_{k}(x)\geq \label{mokla}\\
&B\left(\int_{\mathbb{R}^{k_{1}}}u_{1}(y\sqrt{A_{1}^{*}A_{1}}) d\gamma_{k_{1}}(y),\int_{\mathbb{R}^{k_{2}}}u_{2}(y\sqrt{A_{2}^{*}A_{2}})d\gamma_{k_{2}}(y), \int_{\mathbb{R}^{k_{3}}}u_{3}(y\sqrt{A_{3}^{*}A_{3}})d\gamma_{k_{3}}(y)\right).\nonumber
\end{align}
Next, denote $\tilde{u}_{j}(x) := u_{j}(xA_{j})$, and let $P_{t}\tilde{u}_{j}$ be its heat extension, i.e., $\partial_{t} P_{t}\tilde{u}_{j} = \Delta P_{t}\tilde{u}_{j}$, and $P_{0}\tilde{u}_{j}=\tilde{u}_{j}$.  We will need the following key identities
\begin{align*}
&P_{t}\tilde{u}_{j}(x) =\int_{\mathbb{R}^{k}} \tilde{u}_{j}(x+y\sqrt{2t})d\gamma_{k}(y)\stackrel{(*)}{=} \int_{\mathbb{R}^{k_{j}}} u_{j}(xA_{j} + \tilde{y} (2t A_{j}^{*}A_{j})^{1/2}) d\gamma_{k_{j}}(\tilde{y}),\\
&\nabla P_{t} \tilde{u}_{j}(x) = \left(\int_{\mathbb{R}^{k_{j}}}\nabla u_{j}(xA_{j}+\tilde{y}(2tA^{*}_{j}A_{j})^{1/2})d\gamma_{k_{j}}(\tilde{y})\right)A_{j}^{*}=: (P^{j}_{t}\nabla u_{j}(x)) A_{j}^{*},\quad j=1,2,3, 
\end{align*}
where $\nabla u_{j}(z):= \nabla u_{j}(y) | _{y =z}$. 
Equality $(*)$ follows from a property of the Gaussian measure, namely, 
$$
\int_{\mathbb{R}^{k}}u_{j}(yA_{j}) d\gamma_{k}(y) = \int_{\mathbb{R}^{k_{j}}}u(\tilde{y} \sqrt{A_{j}^{*}A_{j}}) d\gamma_{k_{j}}(\tilde{y}) \quad \text{for} \quad j=1,2,3. 
$$
 
Next, let $\vec{u}(x):=(\tilde{u}_{1}(x), \tilde{u}_{2}(x), \tilde{u}_{3}(x))$ and $P_{t} \vec{u}(x) =(P_{t} \tilde{u}_{1}(x), P_{t} \tilde{u}_{2}(x), P_{t} \tilde{u}_{3}(x))$. If we test inequality (\ref{mokla}) on the functions  $f_{j}(y):=u_{j}(xA_{j}+y\sqrt{2t})$, we obtain that (\ref{mokla}) is equivalent to the following inequality 
\begin{align*}
V(x,t):=B(P_{t}\vec{u}(x))  -P_{t}B(\vec{u}(x)) \geq  0
\end{align*}
for all $x \in \mathbb{R}^{k}$ and all $t \geq 0$, and  the case $x=0, t=1/2$ gives exactly (\ref{mokla}).  Denote 
\begin{align*}
P_{t}\nabla  \vec{u}(x) := (P_{t}^{1}\nabla u_{1}(x), P_{t}^{2}\nabla u_{2}(x), P_{t}^{3}\nabla u_{3}(x))
\end{align*}
It follows from the straightforward calculation that 
\begin{align*}
(\Delta - \partial_{t})V(x,t)= (P_{t} \nabla \vec{u})\, (A^{*}A \bullet \mathrm{Hess}\, B(P_{t} \vec{u}))  (P_{t} \nabla \vec{u})^{*}. 
\end{align*}
Therefore, if $A^{*} A \bullet \mathrm{Hess}\; B \leq 0$ then $(\Delta-\partial_{t})V\leq 0$. Since $V(x,0)=0$, it  follows from the classical maximum principle $V(x,t)\geq 0$ for all $x \in \mathbb{R}^{k}$, $t\geq 0$.  

On the other hand if (\ref{mokla}) holds, then we have explained that   $V(x,t)\leq 0$ for all $x \in \mathbb{R}^{k}$ and $t\geq 0$. Therefore
\begin{align*}
0\leq \lim_{t \to 0+} \frac{V(x,t)}{t} =  \lim_{t \to 0+} \frac{V(x,t)-V(x,0)}{t}=- (P_{0} \nabla \vec{u}(x))\, (A^{*}A \bullet \mathrm{Hess}\, B(P_{0} \vec{u}(x)))  (P_{0} \nabla \vec{u}(x))^{*}.
\end{align*}
 Since $P_{0}\vec{u}(x)=\vec{u}(x)$, $P_{0}\nabla \vec{u} = (\nabla u_{1}(xA_{1}), \nabla u_{2}(xA_{2}), \nabla u_{3}(xA_{3}))$, and $\vec{u}$ is arbitrary, we obtain $A^{*}A \bullet \mathrm{Hess}\, B \leq 0$.

\end{document}